\newtheorem{theo}{Theorem}[section]
\newtheorem{prop}[theo]{Proposition}
\theoremstyle{remark}
\newcommand{\R}{{\mathbb{R}}}
\newcommand{\C}{{\mathbb{C}}}
\newcommand{\Z}{{\mathbb{Z}}}
\newcommand{\la}{\lambda} 
\newcommand{\al}{\alpha} 
\newcommand{\be}{\beta} 
\newcommand{\Om}{\Omega} 
\newcommand{\ga}{\gamma}
\newcommand{\om}{\omega} 
\newcommand{\Si}{\Sigma} 
\newcommand{\si}{\sigma} 
\newcommand{\Ga}{\Gamma}
\newcommand{\Ci}{{\mathcal{C}}^{\infty}}
\newcommand{\Ad}{\operatorname{Ad}} 
\newcommand{\aad}{\operatorname{ad}} 
\newcommand{\hol}{\operatorname{Hol}} 
\newcommand{\id}{\operatorname{id}} 
\newcommand{\Hom}{\operatorname{Hom}}
\newcommand{\End}{\operatorname{End}}
\newcommand{\Hilb}{{\mathcal{H}}}   
\newcommand{\SU}{\operatorname{SU}}  
\renewcommand{\top}{\operatorname{top}}
\newcommand{\Ac}{{\mathcal{A}}} 
\newcommand{\Teich}{{\mathcal{T}}} 
\newcommand{\Diff}{\operatorname{Diff}}   
\newcommand{\mo}{{\mathcal{M}}}
\newcommand{\op}{{\operatorname{Op}}}  
\newcommand{\difh}{D^{\operatorname{hol}}}
\newcommand{\I}{{\mathcal{I}}}
\newcommand{\toep}{\operatorname{Toep}}
\begin{document}

\title{Asymptotic properties of the quantum representations of the
  mapping class group}

\author{Laurent CHARLES \footnote{Institut de
    Math{\'e}matiques de Jussieu (UMR 7586), Universit{\'e} Pierre et
    Marie Curie -- Paris 6, Paris, F-75005 France.}}

\bibliographystyle{plain}

\maketitle

\begin{abstract}
We establish various results on the large level limit of projective
quantum representations of surface mapping class groups obtained by
quantizing moduli spaces of flat $SU(n)$-bundle. Working with the
metaplectic correction, we proved that these projective
representations lift to asymptotic representations. We show that
the operators in these representations are Fourier integral
operators and determine explicitly their canonical relations and symbols. We deduce from these facts the Egorov property and the asymptotic unitarity, two results already proved by J.E. Andersen. Furthermore we show under a transversality
assumption that the characters of these representations have an
asymptotic expansion. The leading order term of this expansion agrees
with the formula derived heuristically by E. Witten in {\em Quantum field theory and the Jones polynomial}.
\end{abstract}

Quantum Chern-Simons theory was introduced twenty years ago by
Witten \cite{Wi} and Reshetikhin-Turaev \cite{ReTu}. It provides finite dimensional projective representations of surface mapping class
groups. These representations may be defined with combinatorial-topological
methods or through conformal field theory. Another approach, that we
will follow, consists in quantizing the moduli spaces of flat
principal bundles with a given structure group.

More precisely, consider a closed surface $\Si$, a compact Lie group $G$ and
an integer $k$, called the level. The moduli space of flat $G$-principal bundles
over $\Si$ is the base of a Hermitian line bundle called
the Chern-Simons bundle. The quantum Hilbert space associated to
($\Si$, $G$, $k$) is the space of holomorphic sections of the $k$-th
tensor power of the Chern-Simons bundle. Here
the complex structure is induced by a class of complex structures in
the Teichm{\"u}ller space of $\Si$. So we have a vector bundle over the Teichm{\"u}ller space, called the
Verlinde bundle, whose fibers are the various Hilbert spaces. 
Hitchin \cite{Hi} and Axelrod-Della Pietra-Witten \cite{AxWi}
proved that the Verlinde bundle has a natural projectively flat connection. Using this connection, we can identify the different fibers through parallel
transport. The connection being equivariant with respect to the
mapping class group action, we obtain a projective representation of
this group.  

In this paper, we study the asymptotic properties of the connection and
deduce several facts on the mapping class group representations. Here
and in the sequel, asymptotic always refer to the large level limit. 

We consider only the case where $G$ is the special
unitary group $SU(n)$ with $n \geqslant 2$, the genus $g$ of $\Si$ is
larger than 2 and the moduli space is the space of
$G$-principal flat bundles whose
holonomy around a given marked point is a fixed generator of the center of
$G$. With this last condition the moduli space is smooth.  
The relevant mapping class group
here is $\Ga_g^1$, the group of isotopy classes of orientation
preserving homeomorphisms that are the identity on a fixed neighborhood
of the marked point. Using the connection of the Verlinde bundle defined in \cite{Hi}, we
obtain a projective representation of $\Ga_g^1$. 

We work with the metaplectic correction. This does not change the
Verlinde bundle, neither the projective representation. Nevertheless
the connection is modified by a multiple of identity and its natural
automorphisms group is an extension of the mapping class group
$$ 1 \rightarrow \Z/2 \Z \rightarrow \tilde{\Ga}_g^1 \rightarrow
\Ga_g^1 \rightarrow 1.$$ 
This modified connection has been introduced in \cite{AnGaLa}, the
comparison with the one of \cite{Hi} is made in
the present paper.

Our first semi-classical result says that the modified connection is asymptotically flat in
the sense that its curvature is bounded on compact subsets by a
constant times the inverse of the level, cf. theorem \ref{theo:curvature}. As a consequence we obtain an asymptotic representation of
$\tilde{\Ga}_g^1$. More precisely we can lift the projective representation
in such a way that the projective multiplier of any
two elements of $\tilde{\Ga}_g^1$ is bounded by the inverse of the
level. 

Second we prove that the operators in these asymptotic representations
are Fourier integral operators, the underlying canonical relations
being given by the action of the mapping class group on the
Chern-Simons bundle and the principal symbols by the action on the
half-form bundle, cf. theorems \ref{theo:FIO} and \ref{theo:principal-symbol}. So we recover the classical Chern-Simons theory from large level quantum theory. This is our main result and it has several consequences. 

First we recover some results proved by Andersen in \cite{An}: the quantum representations satisfy the Egorov property. This implies that an element of $\tilde{\Ga}^g_1$ can not act
trivially unless its action on the moduli space is trivial, cf. theorem \ref{theo:faithfulness}. 
Furthermore the quantum representations are asymptotically
unitary for a natural scalar product, meaning that the product of each
operator by its adjoint is the identity modulo a term bounded by the
inverse of the level, cf. theorem \ref{theo:unitarity}. The existence of a scalar product asymptotically invariant was already proved in \cite{An}.

Another important consequence, which is a new result, is the estimate under some transversality assumptions
of the characters of
the mapping class group representation. We prove that these characters
admit an asymptotic expansion and compute explicitly the leading
order term, cf. theorem \ref{theo:char-quant-repr}. It is  a general property of
topological quantum field theory that the character of the
representation of an element
$\varphi$ of
the mapping class group is the three-dimensional invariants of the
mapping torus of $\varphi$. Using path integral methods, Witten
derived  heuristically \cite{Wi} an asymptotically equivalent of the
three-dimensional invariants. The leading order term we find agrees with
 Witten's formula. Let us point out that to our knowledge the
equivalence between the different approaches of topological quantum
field theory remains conjectural. In particular we do not know any
precise statement relating the character of the representation defined
via geometric quantization of the moduli spaces with the invariants of surface bundles defined via
combinatorial-topological methods. 

Our proofs rely on the microlocal techniques which have been developed
in the context of geometric quantization since the seminal work of
Boutet de Monvel and Guillemin \cite{BoGu}. We essentially use results
of our own papers: \cite{oim_qm} for the Fourier integral
operators in this geometric context, \cite{oim_mc} for the
semi-classical properties of connections  and \cite{oim_LF} for the estimate of the trace of a
Fourier integral operator. The role played by the metaplectic
correction in this theory was understood in \cite{oim_mc}. For a part
of the results, this correction is not necessary. As instance we can prove
that the operator in the representation defined with the connection of
\cite{Hi} are Fourier integral operators and that their characters
admit an asymptotic expansion. But for the computations of the principal
symbol of these operators and of the leading order term of the asymptotic
expansion of the characters, we use the metaplectic correction.

The semi-classical limit of the quantum Chern-Simons theory is
essentially conjectural, because it relies on the path integral
formalism. Nevertheless some rigorous results have been obtained, cf. 
\cite{MaNa} for the estimate of the trace norm of the curve operators,
\cite{FrWaWa} and \cite{An} for the asymptotic faithfulness. In particular,
the paper \cite{An} was the first using microlocal
techniques in this context, cf also \cite{An3}. Many papers were devoted to the Witten's formulas for the asymptotic of the
three-dimensional invariant of Seifert manifolds, cf. \cite{BeWi} for
a recent account and references therein. The case of the mapping torus
of finite order diffeomorphisms has been considered in \cite{An2}.

In a companion paper \cite{oim_MG}, we
prove similar results in genus 1 for the quantum representations
of the 
modular group. In particular we derive the asymptotic for the
the torus bundle invariants proved in \cite{Je}. 

The paper is organized as follows. Section \ref{sec:quant-moduli-spac}
is mainly expository. We
introduce the various data necessary to quantize the moduli spaces
which are the complex structures, the Chern-Simons bundle and half-form bundle. We also
discuss carefully the action of the modular group. Since we work
with one marked point, it is necessary to consider various extensions
of the usual mapping class group. 
In section \ref{sec:results}, we
state the results previously mentioned in the
introduction. In section \ref{sec:hitchins-connection} we review the construction of the Hitchin's
connection. We give a general presentation in order to compare the connection
of \cite{Hi} with the one of \cite{AnGaLa}. This makes clear that the
connection with the metaplectic
correction is projectively flat. In sections
\ref{sec:an-algebra-toeplitz} and \ref{sec:asymptotic-flatness}, we prove the asymptotic
flatness. The material in these sections is completely elementary. In
the last sections \ref{sec:semi-class-conn} and
\ref{sec:parallel-transport}, we introduce a general class of
semi-classical connections and prove that their parallel transport are
Fourier integral operators. 

\paragraph{Acknowledgements ---} I am grateful to Richard Wentworth
for helpful discussions and to Chris Woodward for
answering my questions about his work.

\section{Quantization of moduli spaces} \label{sec:quant-moduli-spac}

\subsection{Moduli space and the Chern-Simons bundle} \label{sec:moduli-space-chern-simons}

Let $\Si$ be a compact connected oriented surface without
boundary of genus $g
\geqslant 2$  with a marked point $p$. Let $n$, $d$ be two coprime integers with
$n \geqslant 2$. 
Consider the moduli space $\mo $ of flat
$\SU (n)$-principal bundles over $\Si \setminus \{ p \} $ with holonomy around $p$ equal to $\exp (
2 i \pi d/ n) \id$.
Since $(n,d)=1$, $\mo$ is a smooth compact
manifold. 

For any $[P] \in \mo$, the bundle associated to $P$ via the adjoint
representation is a flat real vector bundle over $\Si \setminus \{
p \}$ whose holonomy around $p$ is trivial. So this associated bundle
is the restriction of a flat real vector bundle $\Ad P$ over $\Si$,
unique up to isomorphism.   
The tangent space of the moduli space at $[P]$ is  
$$ T_{[P]} \mo \simeq H^{1}( \Si, \Ad P).$$ 
The bundle $\Ad P$ has a natural metric coming from  the basic scalar product of
${\mathfrak{su}}(n)$
$$ a \cdot b = -\frac{1}{4 \pi^2} \operatorname{tr} (a b), \qquad a ,
b \in \mathfrak{su}(n) $$
Atiyah and Bott \cite{AB} introduced a symplectic form $\om_\mo$ on
$\mo$. It  
is given by 
$$ \om_\mo ( [a], [b] ) =2 \pi \int_{\Si}  a
\cdot b  $$
where $a$ and $b$ are any closed forms of $\Om^1 ( \Si, \Ad P)$. 

The following facts are proved in \cite{AB}, \cite{Ra}, \cite{MeWo2}: $\mo$ is simply connected, it has no torsion, its second Betti number is one and $n \om_\mo$ is a generator of $H^2 (\mo , \Z) \subset H^2(\mo, \R)$. So there exists a Hermitian line bundle $$ L_{CS} \rightarrow \mo$$ equipped with a connection
of curvature $\frac{n}{i} \om_\mo$. We call $L_{CS}$ the Chern-Simons bundle, it is unique up to isomorphism. 

Various explicit constructions of $L_{CS}$ as a quotient in gauge theory were given in \cite{Fr}, \cite{MeWo} and \cite{BiLa} extending the construction of Ramadas-Singer-Weitsman \cite{RaSiWe} in the case $\Si$
has no marked point. This is important for our purpose because one can deduce that the mapping class group action on $\mo$ lifts to $L_{CS}$, cf. next section. We can also construct by gauge theory an orbifold prequantum bundle on $\mo$ with curvature $\frac{1}{i} \om_\mo$. The $n$th power of this bundle is $L_{CS}$. There is no restriction to consider only $L_{CS}$ because only the $k$-th powers of the orbifold prequantum bundle with $k$ divisible by $n$ admits non trivial global sections. 

\subsection{Mapping class groups}

Since we consider a surface with a marked point $p$, we may
introduce three different mapping class groups 
$$ \Ga_g := \pi_0 (\Diff ^+ ( \Si)) , \quad \Ga_{g,1} := \pi_0 (
\Diff^+ ( \Si, p)) , \quad \Ga_g^1 := \pi_0 ( \Diff^+ ( \Si, D)) $$
where $D$ is an embedded disk in $\Si$ whose interior contains
$p$. Recall that $\Ga_{g,1}$ is an extension of $\Ga_g$ by the
fundamental group of $\Si$,
$$ 1 \rightarrow \pi_1 ( \Si) \rightarrow \Ga_{g,1} \rightarrow \Ga_g
\rightarrow 1.$$
Furthermore, 
$$ 1 \rightarrow \Z \rightarrow \Ga_g^1 \rightarrow \Ga_{g,1} \rightarrow
1$$
where the kernel is generated by a Dehn twist on a loop around $p$. 

The following facts are explained in detail in the paper \cite{oimCS}. First, the mapping class group
$\Ga_{g,1}$ acts on $\mo$; this action does not factor through an action of
$\Ga_g$. Second, the mapping class group $\Ga_g^1$ acts the Chern-Simons bundle by automorphisms of
prequantum bundles. This action lifts the previous action of
$\Ga_{g,1}$ on $\mo$. Nevertheless it does not in general factor through an action
of $\Ga_{g,1}$. Indeed, a Dehn twist on a loop
around $p$ acts on $L_{CS}$ by multiplication by $\exp \bigl( i \pi (n-1)
d^2\bigr)$ in each fiber.

\subsection{Complex structure} 
Suppose $\Si$ is endowed a complex structure compatible with the
orientation. Then by Hodge decomposition, for any $[P] \in \mo$
$$ H^{1}(M, \Ad P) \otimes \C = H^{0,1} ( \Si , (\Ad P) \otimes \C )
\oplus H^{1,0} ( \Si , (\Ad P) \otimes \C )$$
$\mo$ has a complex structure such that the holomorphic
tangent space at $[P]$ is the first summand in the previous
decomposition. This complex structure is integrable, compatible with
$\om_{\mo}$ and positive. So it makes $\mo$ a K{\"a}hler manifold. It may also
be defined by identifying $\mo$ with the moduli
space of holomorphic vector bundles of rank $n$, degree $d$ with a
fixed determinant through the Narasimhan-Seshadri theorem. 
The Chern-Simons bundle has a unique holomorphic
structure such that its $\bar{\partial}$-operator is the $(0,1)$-part of the
connection.

Let $\Ac$ be the space of complex structures of $\Si$ and $$\Teich := \Ac /
\Diff_0^+ (\Si)$$ be the Teichm{\"u}ller space. The mapping class group
$\Ga_g$ acts on $\Teich$. As a fact, the complex structure
of $\mo$ induced by $j \in \Ac$ only depends on the class of $j$ in the
Teichm{\"u}ller space.
Furthermore the actions of $\Ga_{g,1}$ on $\mo$ and
$\Ga_g$ on $\Teich$ are compatible in the sense that for any $\ga \in
\Ga_{g,1}$ and $\si \in \Teich$, the action of $\ga$ on $\mo$ sends
the complex structure induced by $\si$ into the one induced by $\ga (
\si)$. 

For any class $\si$ in the Teichm{\"u}ller space,  we let $\mo_\si$ be $\mo$ endowed with the complex structure induced by $\si$ and $L_\si
\rightarrow \mo_\si$
be the Chern-Simons bundle with the corresponding holomorphic
structure. 

We shall consider $\Teich \times \mo \rightarrow \mo$ as a
smooth family
of complex manifold, so we identify $\mo_\si$ with the slice $\{ \si
\} \times \mo$.  
Denote by $L$ the pull-back of $L_{CS}$ by the
projection from
$\Teich \times \mo$ onto $\mo$. The action of $\Ga_g^1$ on $L$ lifts
the diagonal action of $\Ga_{g,1}$ on $\Teich \times \mo$. 

\subsection{Half-form bundles} 
Let $K$ be the line
bundle over $\Teich \times \mo$ whose restriction to $\mo_\si$ is the canonical bundle $K_\si$ of $\mo_\si$. 
By \cite{Ra}, the canonical class of $\mo$ has a square root. So there exists a line bundle $\delta$  over $\Teich \times \mo$ such that $\delta^2 \simeq
K$. Fix an isomorphism  $\varphi$ 
from $\delta^2$ to $K$. Since $\Teich$ is contractible and $\mo$ simply connected, $\delta$ and
$\varphi$ are unique up to isomorphism. 

The diagonal action of the mapping class group
$\Ga_{g}^1$  on $\Teich \times \mo$  lifts naturally to $K$. Denote
by $\ga_K$ the isomorphism of $K$ corresponding to the action of $\ga
\in \Ga^1_g$. Then define the subgroup of $\Ga^1_g \times \operatorname{Aut} ( \delta)$ 
$$\tilde \Ga ^1_g  =  \{ ( \ga , \ga_\delta) /  \varphi \circ
\ga_{\delta} ^{\otimes 2 }  = \ga_{K} \circ  \varphi \} $$ 
This group is an extension by $\Z/ 2 \Z$ of $\Ga_g^1$. This follows again from the fact that
$\Teich$ is contractible and $\mo$ simply connected. 
Since $\Ga^1_g$
acts on  the Chern-Simons bundle, the same holds for its cover $\tilde \Ga ^1_g$. This is
the reason why we consider here $\Ga^1_g$ instead of $\Ga_{g,1}$.  

For any $\si \in \Teich$, we denote by  $\delta_\si$ the restriction of
$\delta$ to $\mo_{\si}$. This line bundle has a holomorphic structure
and a Hermitian metric determined by the condition that the restriction
of $\varphi$ is an isomorphism $\delta_\si^2 \rightarrow K_\si$ of
holomorphic Hermitian bundles.

\subsection{Quantization} 

Define for any positive integer $k$ and any $\si \in \Teich$, 
$$ \Hilb_{k, \si} := H^0 ( \mo_\si, L_{\si}^k \otimes \delta_\si )$$
This vector space is finite dimensional and has a natural scalar product
defined by integrating the pointwise scalar product of sections against
the Liouville measure.  

Since the canonical class of $\mo$ is negative, it follows from Riemann-Roch-Hilbert theorem and Kodaira vanishing
theorem that the dimension of $\Hilb_{k, \si}$  does not depend on
$\si$. So by elliptic regularity, the $\Hilb_{k, \si}$'s  are the
fibers of a smooth vector bundle $\Hilb_k$ 
with base $\Teich$. Furthermore, the actions of $ \tilde{\Ga}^1_g $ on
the Chern-Simons bundle and $ \delta $ induce an action of the same group on $\Hilb_k$.  

By \cite{AnGaLa}, the bundle $\Hilb_k$ has a natural $\tilde{\Ga}_g^1$-equivariant connection $\nabla^{\Hilb_k}$. It is given by an explicit
for\-mu\-la, cf. theorem \ref{theo:Hitchin}. The paper
\cite{AnGaLa}  does not address the issue of the pro\-jec\-tive
flatness. Actually this projective flatness follows from the results of
\cite{Hi} as it is explained in section \ref{sec:appl-moduli-space}. 

\subsection{Comparison with the usual construction} 

By \cite{MeWo}, the canonical class of $\mo$ is $-2 c_1 (L_{CS})$. So
the fiber bundles $\delta$ and $L^{-1}$ are isomorphic. Since the
Jacobian variety of each $\mo_\si$ is trivial, one can even choose an
isomorphism between $\delta$ and $L^{-1}$ restricting on each slice $\mo_{\si}$ to an isomorphism of
holomorphic vector bundles. So 
\begin{gather} \label{eq:isom}
 \Hilb_{k,\si} \simeq H^0 (M_\si, L_\si^{k-1})
\end{gather}
In \cite{Hi}, Hitchin defined a flat connection on the projective
space bundle over the Teichm{\"u}ller space with fibers ${\mathbb{P}}(H^0
(\mo_\si, L_\si^{k-1})).$ We prove in section \ref{sec:appl-moduli-space}
that this connection is the same as the one induced by
$\nabla^{\Hilb_k}$ via the isomorphism (\ref{eq:isom}).

The bundles $\delta$ and $L^{-1}$ have no reason to be
$\tilde{\Ga}^1_g$-equivariant. However, choosing as above an isomorphism compatible with
the holomorphic structure in each slice, we obtain an equivariant
isomorphism between the projective space bundles. This follows from
the trivial fact that the group of automorphisms lifting the identity of a holomorphic line bundle
with a connected base is $\C^*$. So the projective representation
induced by $\nabla^{\Hilb_k}$ is the same as the one induced by the
Hitchin connection. Observe that this projective representation factors through a
projective representation of $\Ga_{g,1}$. 

\subsection{A conjecture on $\tilde{\Ga}^g_1$}

Recall that the Hodge line bundle is the complex line bundle over the
Teichm{\"u}ller space whose fiber at $\si$ is $
\wedge^{\operatorname{top}} H^1(\Si_\si, {\mathcal{O}} )$. Here
$\Si_\si$ is the surface $\Si$ endowed with the complex structure $\si$
and $ {\mathcal{O}}$ is the structural sheaf. Denote by $\lambda$ the
pull-back of the Hodge line bundle by the projection $\mo \times
\Teich \rightarrow \Teich$. There is an obvious action of $\Ga_{g}$
on $\lambda$. Furthermore the previously defined actions of $\Ga_g^1$
on $K$ and $L^2_{CS}$ factor through actions of $\Ga_{g,1}$. We conjecture
that there exists a $\Ga_{g,1}$-equivariant isomorphism
\begin{gather} \label{eq:conj} 
 K \simeq L_{CS}^{- 2} \otimes \lambda^{n^2 -1}
\end{gather}
A similar result is proved for the moduli space without marked
point in \cite{Fu}. 

Introduce a symplectic basis of the first homology group of $\Si$. So the action on the homology defines a morphism $\Psi: \Ga_{g} \rightarrow \operatorname{Sp}(2g, \R)$. Let $\operatorname{Mp}(2g)$ be the metaplectic group and $q$ be the projection $\operatorname{Mp}(2g) \rightarrow \operatorname{Sp}(2g,\R)$. If there exists a $\Ga_{g,1}$-equivariant isomorphism (\ref{eq:conj}), then we have the following group isomorphisms
$$\tilde{\Ga}_g^1 \simeq \begin{cases} \Ga^1_g
\times \Z_2  \text{ if $n$ is even}\\ 
\{ (\gamma , h) \in \Ga ^1_g \times \operatorname{Mp}(2g)/ \; \Psi (\gamma) = q (h) \} \text{ if $n$ is odd.}
\end{cases}
$$

\section{Semi-classical results} \label{sec:results}

\subsection{Asymptotic flatness} 

The sequence of connections  $( \nabla^{\Hilb_k} ) $ is asymptotically flat.

\begin{theo} \label{theo:curvature}
For any  vector fields $X$, $Y$ of $\Teich$, any compact set $K$ of
$\Teich$, there exists $C>0$, such that the
curvature at any $\si \in K$
$$ R^{\Hilb_k}(X, Y) (\si ) = [ \nabla_X ^{\Hilb_k}, \nabla_Y ^{\Hilb_k} ] - \nabla ^{\Hilb_k}_{[X , Y ]} : \Hilb_{k, \si} \rightarrow \Hilb_{k ,\si} $$    has a uniform norm bounded by $C k^{-1}$. 
\end{theo}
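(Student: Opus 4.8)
The plan is to work from the explicit formula for $\nabla^{\Hilb_k}$ recalled in Theorem~\ref{theo:Hitchin}. It presents $\nabla^{\Hilb_k}$ as the restriction, to the subbundle $\Hilb_k$, of a connection of the form $\nabla^{tr}+u_k$ on the infinite rank bundle over $\Teich$ with fibre $\Ci(\mo_\si,L_\si^{k}\otimes\delta_\si)$ at $\si$. Here $\nabla^{tr}$ is the reference connection induced by the product structure of $\Teich\times\mo$, by the connection of $L$ (which is flat along $\Teich$, $L$ being pulled back from $\mo$) and by a connection on $\delta$ (which one may take flat along $\Teich$, $\delta$ being smoothly trivial over the contractible factor $\Teich$); and $u_k$ is a one-form on $\Teich$ whose value $u_k(X)$ on a vector field is a differential operator along $\mo$, of order at most two and built from $(1,0)$-derivatives, with coefficients that are $O(k^{-1})$ and smooth in $\si$ --- this is where the symmetric bivector $G(X)$ encoding the variation of the complex structure of $\mo_\si$ and the Ricci potential of $\mo_\si$ enter. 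Because $\nabla^{tr}+u_k$ preserves the subbundle $\Hilb_k$ and $\nabla^{\Hilb_k}$ is its restriction, the curvature $R^{\Hilb_k}(X,Y)(\si)$ is the restriction to $\Hilb_{k,\si}$ of the differential operator
\[
R(X,Y)=R^{\nabla^{tr}}(X,Y)+\bigl(\nabla^{tr}_X u_k(Y)-\nabla^{tr}_Y u_k(X)-u_k([X,Y])\bigr)+[u_k(X),u_k(Y)],
\]
and with the above choice of reference connection the first term vanishes.

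The second step is to interpret the compression of $R(X,Y)$ to holomorphic sections as a Toeplitz operator. In Section~\ref{sec:an-algebra-toeplitz} I would set up the elementary calculus of operators of the form $\Pi_{k,\si}\circ P\circ\Pi_{k,\si}$, with $\Pi_{k,\si}$ the orthogonal projector onto $\Hilb_{k,\si}$ and $P$ a holomorphic differential operator along $\mo$, and establish: such an operator is bounded on $\Hilb_{k,\si}$ uniformly in $k$ --- whatever the differential order of $P$, because a $(1,0)$-derivative, applied to a holomorphic section and projected back, acts like a multiplication up to $O(k^{-1})$; the class is stable under composition, with a symbol calculus whose leading term is multiplicative; and the commutator of two such operators gains one power of $k^{-1}$. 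All constants are locally uniform in $\si$, which will give the uniformity over a compact $K\subset\Teich$. Applying this to the formula above, the cross terms $\nabla^{tr}_X u_k(Y)-\dots$ are $O(k^{-1})$ times holomorphic operators of order at most two, hence of norm $O(k^{-1})$ after compression, while $[u_k(X),u_k(Y)]$, a commutator of operators of size $O(k^{-1})$, compresses to an operator of norm $O(k^{-2})$.

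The delicate point --- and the one place where the metaplectic correction really matters --- is to make sure the formula of Theorem~\ref{theo:Hitchin} is normalised so that $u_k$ carries no zeroth-order term of size $O(1)$, equivalently so that the $k$-independent symbol of the compressed operator $\Pi_{k,\si}R(X,Y)\Pi_{k,\si}$ vanishes. Here I would either read this off the normalisation of the half-form connection used in \cite{AnGaLa}, or else carry out the short symbol computation showing that the leading part of $\Pi_{k,\si}R(X,Y)\Pi_{k,\si}$ is zero; conceptually this expresses that the classical prequantum data $(\mo,\om_\mo,L_{CS})$ and its square-root line bundle do not depend on $\si$, so define a flat object over $\Teich$, and it parallels, at leading order, the projective flatness established in Section~\ref{sec:hitchins-connection}. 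Granting this, $\Pi_{k,\si}R(X,Y)\Pi_{k,\si}$ has operator norm $O(k^{-1})$ uniformly for $\si\in K$, and since $\Hilb_{k,\si}$ is the range of $\Pi_{k,\si}$ the same bound holds for $R^{\Hilb_k}(X,Y)(\si)$, which is the assertion. The main obstacle is precisely this symbol cancellation; the boundedness and commutator estimates, though the technical engine that renders the higher-order terms harmless, are routine.
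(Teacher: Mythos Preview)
Your outline has the right architecture --- compute the curvature of the extended connection as a differential operator and then use the Toeplitz calculus of Section~\ref{sec:an-algebra-toeplitz} to bound its compression --- but it mislocates the cancellation, and as written the argument does not close.

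The problem is the claim that the cross terms $[\nabla^{tr}_X,u_k(Y)]-[\nabla^{tr}_Y,u_k(X)]-u_k([X,Y])$ are $O(k^{-1})$. They are not. The second-order piece of $u_k(Y)$ is $\tfrac{1}{4k}\Delta^{G(Y)}$, built out of covariant derivatives $\nabla^{L^k\otimes\delta}_{\partial_i}$ in the $(1,0)$ directions for the complex structure $j_\si$. When you commute $\nabla^{tr}_X$ past these, the frame $\partial_i$ moves with $\si$: by (\ref{eq:der_struct_comp}) the bracket $[X,\partial_i]$ acquires a $(0,1)$ component $-\tfrac{i}{2}\bar\mu(X)(\partial_i)$. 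Carrying this through (this is exactly the computation of Proposition~\ref{prop:res_int}) one meets the prequantum curvature $R^{L^k}$ in a mixed $(\bar E,E)$ direction, which is $k$ times the symplectic form, and the outcome is
\[
\bigl[\nabla^{L^k\otimes\delta}_X,\tfrac{1}{4k}\Delta^{G(Y)}\bigr]^{\hol}=\tfrac{1}{8}\operatorname{tr}\bigl(\mu(Y)\bar\mu(X)\bigr)+O(k^{-1}),
\]
an $O(1)$ zeroth-order term. So even with $u_k$ genuinely of size $O(k^{-1})$, the cross commutator is $O(1)$; the ``delicate point'' is not where you placed it, and it is not equivalent to $u_k$ carrying no $O(1)$ zeroth-order piece.

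What makes the estimate work is that this $O(1)$ contribution is exactly cancelled by $R^{\nabla^{tr}}(X,Y)=R^{\delta}(X,Y)$, the curvature of the specific half-form connection of Section~\ref{sec:settting}, which by Proposition~\ref{prop:courbure_demi_formes_2} equals $\tfrac{1}{8}\operatorname{tr}(\mu(X)\bar\mu(Y)-\mu(Y)\bar\mu(X))$. Summing the two gives zero, and the remainder lies in $k^{-1}\op_{sc}+k^{-2}\op_{sc}$ (Theorem~\ref{theo:curvature_Hitchin}), which Proposition~\ref{prop:algebra-toeplitz} converts into the norm bound. Your choice of a connection on $\delta$ \emph{flat along $\Teich$} destroys this mechanism: with that choice either $u_k$ picks up an $O(1)$ term (the difference of the two $\delta$-connections), so your size hypothesis on $u_k$ fails, or $R^{\nabla^{tr}}$ is zero and no longer supplies the counterterm. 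The cancellation is not a normalisation to be read off; it is the explicit identity between Propositions~\ref{prop:courbure_demi_formes_2} and~\ref{prop:res_int}, and it is the substance of the proof.
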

We will propose two different proofs. The first in section
\ref{sec:asymptotic-flatness} is completely elementary. The second one
relies on a much more general result, theorem
\ref{theo:curvature_semi_classique}. In both cases, the result follows from miraculous cancellations, happening only with the metaplectic correction and which were first observed in \cite{oim_mc}.

Let $\si \in \Teich$. Let $h = ( \ga , \ga_{\delta}) \in \tilde \Ga_g^1$ and $p$ be a path
of the Teichm{\"u}ller space from $\ga.\si$ to $\si$. For any integer
$k$, the action of $h$ restricts to a linear map from
$\Hilb_{k, \si}$ to $\Hilb_{k, \ga.\si}$. By composing this map with the
parallel transport along $p$, we obtain an endomorphism of $\Hilb_{k,
  \si}$ that we denote by $U_k ( h,p)$.
It follows from theorem \ref{theo:curvature} that these maps are defined
independently of the choice of $p$ up to a $O(k^{-1})$ term, that is
for any paths $p_1$ and $p_2$ from $\ga.\si$ to $\si$,      
$$ U_k(h,p_1) =  \la_k(p_1,p_2)  U_k(h,p_2) $$
where $\la_k(p_1,p_2)$ is a sequence of complex numbers equal to $1 +
O(k^{-1})$ . 
Furthermore we obtain an asymptotic representation of $\tilde \Ga_g^1$
in the sense that 
$$ U_k(h,p). U_k(h',p') = \mu_k(p,p')  U_k(hh', p''),  $$
with $\mu_k(p,p')$  a sequence of complex numbers equal to $1 +
O(k^{-1})$.    
\subsection{Semi-classical properties of the quantum representation}
Our main result says that the sequences $(U_k( h,p))_k$ are Fourier
integral operators. This means that the Schwartz kernel of such a
sequence concentrate in a precise way on a Lagrangian submanifold of
$\mo \times \mo^{-}$. Furthermore the restriction of the Schwartz kernel to
this canonical relation is described in first approximation by a flat section of the prequantum
bundle $L_{CS} \boxtimes L_{CS}^{-1}$. Here, the Lagrangian manifold
and the flat section are the graphs of the action of
the mapping class $\ga\in \Ga_g^1$ on the moduli
space $\mo$ and the Chern-Simons bundle respectively.

To state the result, let us recall our convention for Schwartz kernels. Since $\Hilb_{k,\si}$ is a finite dimensional Hilbert space, we have
an isomorphism $\End (\Hilb_{k,\si}) \simeq \Hilb_{k,\si} \otimes
\overline{\Hilb}_{k,\si}$. The latter space identifies with the space
of holomorphic sections of the bundle $$(L^k_\si \otimes \delta_{\si}) \boxtimes
(\overline{L}^k_\si \otimes \overline{\delta}_{\si}) \rightarrow \mo_\si \times \overline
\mo_\si $$
We call the section associated to an endomorphism its Schwartz kernel.  

\begin{theo} \label{theo:FIO}
Let $h = ( \ga , \ga_\delta) \in \tilde \Ga_g^1$ and $p$ be a path of the Teichm{\"u}ller space from $\ga.\si$
to $\si$. Then the Schwartz kernel of $U_k(h,p)$ has the following form 
$$  \Bigl( \frac{k}{2 \pi } \Bigr)^{m} F^k(x,y) \otimes f (x,y , k) + O( k^{-\infty})$$ 
where $m$ is the complex dimension of $\mo$ and 
\begin{itemize}
\item $F$ is a section of $L_\si \boxtimes \overline {L}_\si$ such that $| F(x,y)
| <1 $ if $x \neq \ga.y $, $$ F (\ga .y,y) = (\ga .u) \otimes \bar{u}$$
for all $y \in \mo$ and $u \in L_y$ of norm 1, and $ \bar{\partial} F \equiv 0 $
modulo a section vanishing to any order along $\{ ( \ga .y, y) / \;
y \in \mo\}$.
\item
  $f(.,k)$ is a sequence of sections of  $ \delta_{\si}  \boxtimes  \bar{\delta}_\si
\rightarrow \mo_\si \times \overline \mo_\si $ which admits an asymptotic expansion in the $\Ci$
  topology of the form 
$$f_0 + k^{-1} f_1 + k^{-2} f_2 + ...$$
whose coefficients satisfy $\bar{\partial} f_i \equiv 0  $
modulo a section vanishing to any order along $\{ ( \ga .y, y) / \;
y \in \mo\}$.
\end{itemize}
\end{theo}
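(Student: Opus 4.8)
The plan is to reduce Theorem~\ref{theo:FIO} to the general statement about parallel transport of semi-classical connections (theorem \ref{theo:curvature_semi_classique} and the results of sections \ref{sec:semi-class-conn} and \ref{sec:parallel-transport}), and then identify by hand the canonical relation and the leading symbol. First I would split $U_k(h,p)$ as a product of two pieces: the parallel transport $T_k(p)$ along the path $p$ in Teichm\"uller space, and the linear map $V_k(h):\Hilb_{k,\si}\to\Hilb_{k,\ga.\si}$ induced by the action of $h=(\ga,\ga_\delta)$ on the Chern--Simons and half-form bundles. The second factor is easy: since $\ga$ acts on $\mo$ by a biholomorphism covered by bundle isomorphisms of $L_{CS}$ and of $\delta$, the map $V_k(h)$ is (after pulling back) an exact Fourier integral operator whose canonical relation is the graph $\{(\ga.y,y)\}$ and whose Schwartz kernel is exactly $(\ga.u\otimes\bar u)^{\otimes k}\otimes(\ga_\delta$-action on half-forms$)$ --- this is a pointwise, non-asymptotic statement and accounts for the $F$ and the leading $f_0$ in the theorem. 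The first factor $T_k(p)$ is where the microlocal machinery enters: by the asymptotic flatness (theorem \ref{theo:curvature}) the connection $\nabla^{\Hilb_k}$ is a semi-classical connection in the sense of section \ref{sec:semi-class-conn}, so its parallel transport along any path is a Fourier integral operator whose canonical relation is the graph of a symplectomorphism $\mo_{\ga.\si}\to\mo_\si$, and in the present case --- because the family of complex structures on $\mo$ over $\Teich$ is induced by a family on $\Si$ and the symplectic form $\om_\mo$ is \emph{independent} of $\si$ --- this symplectomorphism is the identity. Hence $T_k(p)$ has canonical relation the diagonal and leading symbol $1$, and the composite $U_k(h,p)=T_k(p)\circ V_k(h)$ has canonical relation exactly $\{(\ga.y,y)\,/\;y\in\mo\}$.

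Carrying this out concretely, the steps are: (1) write Hitchin's explicit formula (theorem \ref{theo:Hitchin}) for $\nabla^{\Hilb_k}$ and observe, using the miraculous cancellations of \cite{oim_mc} enforced by the metaplectic correction, that $\nabla^{\Hilb_k}_X = \nabla^{0}_X + k^{-1}(\text{Toeplitz operator})$, where $\nabla^0$ is the trivial connection coming from the $\Ci$-triviality of $\Hilb_k$ away from holomorphic structure; (2) invoke the composition theorem for Fourier integral operators with Toeplitz operators from \cite{oim_qm} to get that $T_k(p)$ is an FIO associated to the identity, with a full symbol expansion $g_0+k^{-1}g_1+\dots$ whose coefficients are determined by solving transport equations along the path, and with $g_0\equiv 1$; (3) compute $V_k(h)$ explicitly as above, checking that the half-form factor is the natural action of $\ga_\delta$ on $\delta_\si\boxtimes\bar\delta_\si$ lifting the biholomorphism $\ga$; (4) compose, using that composition of FIOs multiplies canonical relations and, to leading order, symbols, to obtain the stated form with $m=\dim_\C\mo$, $F$ the stated section of $L_\si\boxtimes\overline L_\si$, and $f_i$ the resulting half-form coefficients; (5) verify the $\bar\partial$-estimates: they follow because $U_k(h,p)$ takes holomorphic sections to holomorphic sections, so its Schwartz kernel is holomorphic in $x$ and anti-holomorphic in $y$, which forces $\bar\partial F$ and $\bar\partial f_i$ to vanish to infinite order on the canonical relation by the standard argument for FIOs between Bergman-type spaces. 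The bound $|F(x,y)|<1$ for $x\neq\ga.y$ and the normalization $F(\ga.y,y)=(\ga.u)\otimes\bar u$ come from the fact that $F$ is the $k=1$ version of the Bergman-kernel-type section associated to the diagonal of $\mo_\si$ transported by $\ga$, together with positivity of the Kähler polarization.

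The main obstacle, and the step deserving the most care, is step~(1)--(2): proving that the parallel transport of $\nabla^{\Hilb_k}$ genuinely falls in the class of semi-classical connections whose parallel transport the general theory of section \ref{sec:parallel-transport} handles, i.e.\ that the $k^{-1}$-correction in Hitchin's formula is a Toeplitz operator with a nice symbol and that the ``non-flat'' part of the connection contributes only a scalar $1+O(k^{-1})$ to the leading symbol rather than a nontrivial automorphism of the half-form bundle. This is exactly where the metaplectic correction is essential (as the introduction emphasizes): without it the symbol of the $k^{-1}$-term would not be the right one and the leading coefficient $f_0$ would be modified. Concretely one must track the half-form contribution through the transport equation and check it integrates to the identification of $\delta_{\ga.\si}$ with $\delta_\si$ coming from the family structure --- this requires knowing that the curvature term in the transport equation for the symbol involves precisely the Ricci form, which the $\delta$-twist cancels. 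Everything else --- the explicit form of $V_k(h)$, the composition of canonical relations, and the $\bar\partial$ estimates --- is routine given \cite{oim_qm} and \cite{oim_mc}.
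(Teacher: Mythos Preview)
Your proposal is correct and follows essentially the same route as the paper: theorems~\ref{theo:FIO} and~\ref{theo:principal-symbol} are deduced from theorem~\ref{theo:paral_transp}, which shows (via the Lagrangian-state/transport-equation argument of section~\ref{sec:parallel-transport}, after rewriting $\nabla^{\Hilb_k}$ as $\nabla^{\toep,k}+k^{-1}\Pi_k M_{f_1}$ by proposition~\ref{prop:algebra-toeplitz}) that the parallel transport $T_k(p)$ is an FIO with canonical relation the diagonal, and then composing with the action $V_k(h)$ --- which is just a change of variables covered by the bundle maps $\ga_L,\ga_\delta$ --- shifts the diagonal to the graph of~$\ga$. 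One small slip: the leading symbol of $T_k(p)$ is not $g_0\equiv 1$ but the half-form morphism $\Psi_{\ga.\si,\si}$ of theorem~\ref{theo:paral_transp}; this is irrelevant for theorem~\ref{theo:FIO} (which makes no claim about $f_0$) but is precisely what produces the factor $\Psi_{\ga.\si,\si,x}(\ga_\delta(v))\otimes\bar v$ in theorem~\ref{theo:principal-symbol}.
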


Let $(F_k \rightarrow N)_k$ be a family of Hermitian bundle over the
same base and $(s_k \in \Ga ( N, F_k))_k$ be a family of section. Then we
say that $s_k$ is $O(k^{-\infty})$ if for any integer $N$, there
exists $C_N >0$ such that
$$   \| s_k(x) \| \leqslant C_N k^{-N} , \qquad \forall x \in N .$$
The previous theorem implies in particular that the sequence of
Schwartz kernel of $U_k(h, p)$ is $O(k^{-\infty})$ over any compact
set of $\mo^2 \setminus \{ ( \ga.x, x ) , x \in \mo \}$. 
To complete the description of the leading order term, we determine
the restriction of $f_0$ to $\{ ( \ga .y, y) / \;
y \in \mo\}$. It depends on the morphism $\ga_\delta$ of the half-form
bundle. 

For any $x$ in $\mo$ and $\si $ in $\Teich$, let $E_{\si,x}$ be
the subspace of $T_x \mo_\si \otimes \C$ which consists of the vectors of
type $(1,0)$. For any $\si, \si' \in \Teich$, let $\pi_{\si' , \si, x}$ be the
projection from $E_{\si' , x}$ onto $E_{\si, x }$ with kernel
$\bar{E}_{\si' , x} $. Recall that by definition of the half form
bundle, there is a preferred isomorphism $\varphi$ from $\delta^2$ to $K = \det
E^*$. Since the moduli space and the
Teichm{\"u}ller space are simply-connected, there exists a unique family $\Psi_{\si
  ,\si' , x}: \delta_{\si,x} \rightarrow \delta_{\si',x}$ of isomorphisms
satisfying 
$$  \varphi_{\si', x} \circ \Psi_{\si, \si' , x} ^2 = \pi_{\si ' , \si
  ,x }^* \circ  \varphi_{\si , x }, $$  
depending continuously on $\si$, $\si'$ and $x$ and such that
$\Psi_{\si, \si , x}$ is the identity of $\delta_{\si, x}$ for any
$\si$. 

\begin{theo} \label{theo:principal-symbol}
Let $h = ( \ga , \ga_\delta) \in \tilde \Ga_g^1$ and $p$ be a path of the Teichm{\"u}ller space from $\ga.\si$
to $\si$.   
Then for any $x \in \mo$, the Schwartz kernel of
$U_k(h,p)$ at $( \ga.x,x)$ is equal to 
$$  \Bigl( \frac{k}{2 \pi } \Bigr)^{m} \bigl[ (\ga.u)  \otimes
\bar u  \bigr]^k
\otimes \bigl[ \Psi_{\ga .\si, \si , x} (\ga_\delta(v))  \otimes \bar{v} \bigr]
  + O( k^{m -1 })$$ 
where $m$ is the complex dimension of $\mo$ and $u$, $v$ are any
vectors with norm 1 of $L_x$ and $\delta_{\si, x}$
respectively.
\end{theo}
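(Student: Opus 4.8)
The plan is to combine Theorem \ref{theo:FIO}, which already supplies the form of the Schwartz kernel and the value of $F$ along the diagonal $\{(\ga.y,y)\}$, with an identification of the leading half-form coefficient $f_0(\ga.x,x)$. By Theorem \ref{theo:FIO} the kernel of $U_k(h,p)$ at $(\ga.x,x)$ is $\bigl(\tfrac{k}{2\pi}\bigr)^m\bigl[(\ga.u)\otimes\bar u\bigr]^k\otimes f_0(\ga.x,x)+O(k^{m-1})$, so everything reduces to showing
$$ f_0(\ga.x,x) = \Psi_{\ga.\si,\si,x}\bigl(\ga_\delta(v)\bigr)\otimes\bar v. $$
First I would reduce to the case where $p$ is the constant path, i.e.\ $\ga.\si=\si$ and $\ga$ fixes the complex structure: the general half-form symbol is obtained by composing the ``geometric'' action of $h$ (which lands in $\Hilb_{k,\ga.\si}$) with parallel transport along $p$, and by the description of the principal symbol of the parallel transport coming from Theorem \ref{theo:curvature_semi_classique} (used in section \ref{sec:parallel-transport}), parallel transport in $\Hilb_k$ acts on half-forms precisely by the canonical family $\Psi_{\si',\si,x}$. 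Thus the parallel-transport factor contributes exactly the operator $\Psi_{\ga.\si,\si,x}$, and it remains to compute the symbol of the purely geometric action of $h=(\ga,\ga_\delta)$ when $\ga$ is holomorphic.

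For that, I would compute directly. When $\ga$ preserves $\si$, the action of $h$ on $\Hilb_{k,\si}=H^0(\mo_\si,L_\si^k\otimes\delta_\si)$ is pull-back by $\ga^{-1}$ twisted by the lifts $\ga_L^{\otimes k}$ and $\ga_\delta$ to $L_\si^k$ and $\delta_\si$; its Schwartz kernel is supported exactly on the graph $\{(\ga.y,y)\}$ and is, tautologically, the ``delta section'' of that graph tensored with $\ga_L^{\otimes k}\otimes\ga_\delta$. The point is then to read off the principal symbol of such a pushforward-type operator in the Toeplitz/Boutet de Monvel–Guillemin calculus: for the identity operator on $\Hilb_{k,\si}$ the kernel is the Bergman (reproducing) kernel, whose leading term is $\bigl(\tfrac{k}{2\pi}\bigr)^m$ times the flat section of $L\boxtimes\bar L$ along the diagonal tensored with the canonical section $v\otimes\bar v$ of $\delta\boxtimes\bar\delta$ (this normalization of the Bergman kernel with metaplectic correction is exactly what is established in \cite{oim_qm}, and is where the half-form bundle enters). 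Composing on the left with the geometric action of $h$ translates the diagonal to the graph of $\ga$, carries $u\otimes\bar u$ to $(\ga.u)\otimes\bar u$ and $v\otimes\bar v$ to $\ga_\delta(v)\otimes\bar v$, giving the claimed formula with $\Psi$ replaced by the identity; feeding back the parallel-transport factor $\Psi_{\ga.\si,\si,x}$ from the first step yields the stated result. Finally I would check that the formula is independent of the choices of unit vectors $u\in L_x$, $v\in\delta_{\si,x}$ and of the representative path $p$ (the $v\otimes\bar v$ and $(\ga.u)^k\otimes\bar u^k$ combinations are phase-invariant, and path-independence up to $O(k^{m-1})$ is Theorem \ref{theo:curvature}).

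The main obstacle is the second step: pinning down the \emph{exact} leading coefficient of the Bergman/reproducing kernel of $L_\si^k\otimes\delta_\si$ — i.e.\ verifying that the metaplectic correction produces precisely the section $v\otimes\bar v$ of $\delta\boxtimes\bar\delta$ with constant $\bigl(\tfrac{k}{2\pi}\bigr)^m$ and no extra curvature-dependent factor — and tracking how this coefficient transforms under the lift $\ga_\delta$ of a holomorphic $\ga$ and under parallel transport, so that the three ingredients $\Psi_{\ga.\si,\si,x}$, $\ga_\delta$, and the base section $v\otimes\bar v$ assemble in the correct order. This is exactly the place where one must invoke the precise symbol conventions of \cite{oim_qm} and the compatibility of the Hitchin connection with the half-form bundle from section \ref{sec:parallel-transport}; once those normalizations are fixed, the composition is routine.
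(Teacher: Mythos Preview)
Your approach is essentially the paper's own: decompose $U_k(h,p)$ as the geometric action $A_h:\Hilb_{k,\si}\to\Hilb_{k,\ga.\si}$ followed by parallel transport $T_p$, compute the kernel of $A_h$ by applying the biholomorphic pullback to the Bergman kernel of $\Hilb_{k,\si}$ (whose leading term along the diagonal is $(k/2\pi)^m[u\otimes\bar u]^k\otimes[v\otimes\bar v]$ precisely because of the metaplectic correction), and then compose with $T_p$, whose half-form symbol is the family $\Psi$. The paper states this deduction in one line after Theorem~\ref{theo:principal-symbol} (``Theorems~\ref{theo:FIO} and~\ref{theo:principal-symbol} are consequences of theorem~\ref{theo:paral_transp}'') and carries out the hard part---identifying the parallel-transport symbol with $\Psi$ and fixing the Bergman-kernel normalization---at the end of section~\ref{sec:parallel-transport}.

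Two small corrections. First, the result you need for the parallel-transport symbol is Theorem~\ref{theo:paral_transp}, not Theorem~\ref{theo:curvature_semi_classique}; the latter only controls the curvature and says nothing about the half-form morphism $\Psi$. Second, your phrasing ``reduce to the case $\ga.\si=\si$'' is slightly misleading: you are not reducing to that case but rather observing that the geometric action $A_h$ is \emph{always} pullback by a biholomorphism $\mo_\si\to\mo_{\ga.\si}$ (this is the compatibility of the $\Ga_{g,1}$-action with complex structures), so its kernel is the transported Bergman kernel regardless of whether $\ga$ fixes $\si$. With those adjustments your outline is correct and coincides with the paper's argument.
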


Theorems \ref{theo:FIO} and \ref{theo:principal-symbol} are
consequences of theorem \ref{theo:paral_transp}, the latter being a
 generalization of theorem 7.1 in \cite{oim_mc}. We provide a
complete proof in section \ref{sec:parallel-transport}.

\subsection{Asymptotic faithfulness and asymptotic unitarity} 

Let us explain the relation with Andersen's work. By the Egorov theorem for Toeplitz operator (theorem 3.3 of \cite{oim_mc}), one deduce from theorems \ref{theo:FIO} and \ref{theo:principal-symbol} the following fact: if  $(T_k)_k$ is a Toeplitz operator of $( \Hilb_{k, \si})_k$ with symbol $f \in \Ci ( \mo)$, then $$(U_k(\ga, p)^{-1} T_k U_k ( \ga,
p))_k$$ is a Toeplitz operator with principal symbol $\ga_{\mo}^* f$. This fact was proved in \cite{An}. Observe that the Egorov property depends only on the projective class of $U_k(\ga, p)$ unlike theorems \ref{theo:FIO} and \ref{theo:principal-symbol}.

The second part of the following theorem was the main argument to prove the asymptotic
faithfulness of the quantum representation in \cite{An}.

\begin{theo} \label{theo:faithfulness}
Let $h = ( \ga , \ga_\delta) \in \tilde \Ga_g^1$ and $p$ be a path of the Teichm{\"u}ller space from $\ga.\si$
to $\si$. If $U_k( \ga, p) = \id $ when $k$ is sufficiently large,  then $\ga$ acts trivially on the Chern-Simons
bundle and $\ga_{\delta} = \id$. If 
$$  U_k( \ga, p) T U_k( \ga, p)^{-1} = T , \qquad \forall T \in  \End ( \Hilb_{k,\si})    $$ 
when $k$ is sufficiently large, then $\ga$ acts trivially on the
moduli space $\mo$.
\end{theo}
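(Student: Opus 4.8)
The plan is to derive both statements from the explicit description of the Schwartz kernel of $U_k(h,p)$ furnished by theorems \ref{theo:FIO} and \ref{theo:principal-symbol}, treating the two assertions in turn.

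\textbf{First assertion.} Suppose $U_k(\ga,p)=\id$ for all large $k$. The Schwartz kernel of the identity of $\Hilb_{k,\si}$ is the Bergman kernel of $(L_\si^k\otimes\delta_\si)$, which concentrates on the diagonal $\{(x,x)\}$ with leading coefficient $(k/2\pi)^m$ and nowhere vanishes there; in particular it is not $O(k^{-\infty})$ on any neighbourhood of the diagonal. Comparing with theorem \ref{theo:FIO}, the remark following it shows that the kernel of $U_k(\ga,p)$ is $O(k^{-\infty})$ away from the graph $\{(\ga.x,x)\}$; hence the diagonal and the graph must share a point, forcing the closed submanifold $\{\ga.x=x\}$ to be nonempty. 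Since $\mo$ is connected and $\ga_\mo$ is a biholomorphism, a standard argument (the fixed-point set of a holomorphic isometry is a totally geodesic complex submanifold, and a nonempty open-and-closed such set is everything) — or more elementarily, matching the two Lagrangians which are both graphs over $\mo$ — gives $\ga_\mo=\id$. Now that $\ga$ acts trivially on $\mo$, theorem \ref{theo:principal-symbol} applies with $\ga.x=x$: the kernel of $U_k(\ga,p)$ at $(x,x)$ equals $(k/2\pi)^m [(\ga.u)\otimes\bar u]^k\otimes[\Psi_{\si,\si,x}(\ga_\delta(v))\otimes\bar v]+O(k^{m-1})$, while the kernel of $\id$ at $(x,x)$ equals $(k/2\pi)^m u^k\otimes v^k$-type data, i.e. with trivial coefficients. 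Since $\Psi_{\si,\si,x}=\id$, comparing the $L_\si$-factors forces $(\ga.u)\otimes\bar u = u\otimes\bar u$ for every unit $u\in L_x$ and every $x$, i.e. $\ga$ acts trivially on $L_{CS}$; comparing the $\delta_\si$-factors then forces $\ga_\delta(v)\otimes\bar v = v\otimes\bar v$, i.e. $\ga_\delta=\id$.

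\textbf{Second assertion.} Suppose $U_k(\ga,p)$ commutes with all of $\End(\Hilb_{k,\si})$ for large $k$. Then $U_k(\ga,p)$ is a scalar $c_k\cdot\id$, so its Schwartz kernel is $c_k$ times the Bergman kernel, which is concentrated on the diagonal and not $O(k^{-\infty})$ near it. As in the first assertion, the remark after theorem \ref{theo:FIO} then forces the graph $\{(\ga.x,x)\}$ to meet the diagonal, and matching the two graph-Lagrangians over connected $\mo$ gives $\ga_\mo=\id$.

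\textbf{Main obstacle.} The delicate point in both parts is the passage from ``the two Lagrangian submanifolds of $\mo\times\mo^-$ intersect'' to ``they coincide'', i.e. from one fixed point of $\ga_\mo$ to $\ga_\mo=\id$. Here one uses that both Lagrangians are graphs of biholomorphisms of the connected complex manifold $\mo$ and that $F$ in theorem \ref{theo:FIO} satisfies $|F(x,y)|<1$ off the graph; thus wherever the kernel of $U_k$ fails to be $O(k^{-\infty})$ we must be exactly on $\{(\ga.x,x)\}$, and this set of non-decay points for a scalar (or identity) operator is all of the diagonal, giving $\ga.x=x$ for all $x$. Everything else is bookkeeping with the leading symbols of theorem \ref{theo:principal-symbol}.
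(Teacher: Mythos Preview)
For the first assertion you follow the paper's route, reading everything off the kernel descriptions in theorems \ref{theo:FIO} and \ref{theo:principal-symbol}. One step in your write-up is wrong as stated: you claim that the fixed-point set of a holomorphic isometry is ``open-and-closed'', hence all of $\mo$ once nonempty. Fixed-point sets of holomorphic isometries are closed totally geodesic complex submanifolds, but they are not open in general (they typically have positive codimension), so this step does not go through. Fortunately your own ``main obstacle'' paragraph already contains the correct argument: the Bergman kernel of the identity fails to be $O(k^{-\infty})$ exactly on the diagonal, while by theorem \ref{theo:FIO} the kernel of $U_k(h,p)$ is $O(k^{-\infty})$ off the graph $\{(\ga.x,x)\}$; hence the whole diagonal lies in the graph and $\ga$ acts trivially on $\mo$ directly, with no connectedness detour needed. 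Use this from the outset. One further slip: in invoking theorem \ref{theo:principal-symbol} you write $\Psi_{\si,\si,x}$ where the theorem has $\Psi_{\ga.\si,\si,x}$; the symbol matching then yields $\Psi_{\ga.\si,\si,x}\circ\ga_\delta=\id$ on $\delta_{\si,x}$, which is what one actually extracts from the comparison.

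For the second assertion you take a genuinely different route from the paper. The paper argues via the Egorov property: for a Toeplitz operator $(T_k)$ with principal symbol $f$, the conjugate $U_k^{-1}T_kU_k$ is Toeplitz with principal symbol $\ga^*f$; commutation with all $T$, together with the fact that every $f\in\Ci(\mo)$ arises as a Toeplitz symbol, forces $\ga^*f=f$ for all $f$, hence $\ga$ is the identity on $\mo$. Your argument instead uses that commuting with all of $\End(\Hilb_{k,\si})$ makes $U_k$ a scalar $c_k\cdot\id$, and then reruns the kernel-concentration comparison from the first part. This is a valid and more self-contained alternative, bypassing Egorov entirely; the paper's route, on the other hand, only uses commutation with Toeplitz operators rather than with all endomorphisms, and it is this weaker input that Andersen's asymptotic-faithfulness argument actually needs. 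The one gap in your version is that you assert the kernel of $c_k\cdot\id$ is not $O(k^{-\infty})$ on the diagonal without controlling $|c_k|$; this is immediate from theorem \ref{theo:unitarity}, which gives $|c_k|^2=1+O(k^{-1})$.
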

 The first part is an immediate consequence of theorems \ref{theo:FIO} and
\ref{theo:principal-symbol}. The second part follows from Egorov property using the fact that any smooth function of $\mo$ is the symbol of a Toeplitz operator.

Another corollary of theorems \ref{theo:FIO} and
\ref{theo:principal-symbol} is the asymptotic unitarity.

\begin{theo} \label{theo:unitarity}
Let $h = ( \ga , \ga_\delta) \in \tilde \Ga_g^1$ and $p$ be a path of the Teichm{\"u}ller space from $\ga.\si$
to $\si$. Then 
$$ U_k( \ga, p ) U_k( \ga , p)^* = \id + O( k^{-1})$$
where the $O(k^{-1})$ is for the uniform norm of operators of $\Hilb_{k,\si}$.
\end{theo}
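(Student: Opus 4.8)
The plan is to derive the asymptotic unitarity directly from the Fourier integral operator description given in Theorems \ref{theo:FIO} and \ref{theo:principal-symbol}, combined with the multiplicativity of FIO (i.e. the composition formula). First I would observe that the adjoint $U_k(\ga,p)^*$ is itself a Fourier integral operator: its Schwartz kernel is obtained from that of $U_k(\ga,p)$ by swapping the two factors and taking the pointwise conjugate, so its canonical relation is the graph of $\ga_{\mo}^{-1}$ and its symbol is governed by $\ga_\delta^{-1}$ together with the inverse half-form parallel transport $\Psi_{\si,\ga.\si,x}$. Concretely, since $U_k(\ga,p)$ is, up to a $O(k^{-1})$ scalar and up to choice of path, the composition of the geometric action of $h$ with parallel transport, the adjoint is the composition of parallel transport backwards with the action of $h^{-1}$ — so $U_k(\ga,p)^*$ agrees with $U_k(h^{-1}, \bar p)$ up to $O(k^{-1})$, where $\bar p$ is the reversed path.

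Next I would compose the two kernels. The canonical relation of $U_k(\ga,p)$ is the graph $\{(\ga.x,x)\}$ and that of $U_k(\ga,p)^*$ is $\{(x,\ga.x)\} = \{(\ga^{-1}.y,y)\}$; the composition of these two canonical relations is the diagonal of $\mo \times \mo$, which is exactly the canonical relation of the identity operator. Thus $U_k(\ga,p)U_k(\ga,p)^*$ is a Fourier integral operator associated to the identity, i.e. a Toeplitz operator, and its principal symbol is computed by the standard symbol-composition rule: one multiplies the value of $F^k$ along the first relation by the value of $\overline{F}^k$ along the second and integrates out the fibre variable by stationary phase. Here the key input is the normalization property $F(\ga.y,y) = (\ga.u)\otimes\bar u$ with $|F|<1$ off the graph, which forces the phase to be non-degenerate with the correct Hessian; the $\bigl(k/2\pi\bigr)^m$ prefactors and the Gaussian integral then conspire to produce exactly $1$ (this is the same Liouville-measure normalization that makes the Bergman kernel reproduce sections). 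On the half-form side, the symbol of the composition at $x$ is $\Psi_{\si,\ga.\si,x}\bigl(\ga_\delta^{-1}(\Psi_{\ga.\si,\si,x}(\ga_\delta(v)))\bigr)\otimes\bar v$; using that $\Psi_{\si,\ga.\si,x} = \Psi_{\ga.\si,\si,x}^{-1}$ by the cocycle property of the half-form parallel transport, and that $\ga_\delta^{-1}\ga_\delta = \id$, this collapses to $v\otimes\bar v$, i.e. the principal symbol of the identity.

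Therefore $U_k(\ga,p)U_k(\ga,p)^* $ is a Toeplitz operator with principal symbol the constant function $1$, hence it equals $\id + O(k^{-1})$ in operator norm by the standard estimate on the norm of a Toeplitz operator in terms of the sup-norm of its symbol. The main obstacle, and the step requiring genuine care, is the symbol composition computation: one must check that with the metaplectic correction and with the precise $\bigl(k/2\pi\bigr)^m$ normalization the stationary-phase evaluation of the fibre integral gives precisely the constant $1$ and not merely a nonzero constant. This is where the choice of Liouville measure in the definition of the scalar product on $\Hilb_{k,\si}$ enters, and it is exactly the kind of computation carried out in the general composition results for Fourier integral operators in \cite{oim_qm} and in the Egorov-type statements of \cite{oim_mc}; I would invoke those results rather than redo the Gaussian integral by hand. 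A secondary, more bookkeeping point is to justify rigorously that passing to the adjoint swaps the kernel and conjugates it in the holomorphic-section picture, which follows from the very definition of the scalar product as integration of the pointwise Hermitian pairing against the Liouville measure.
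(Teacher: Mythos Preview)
Your proposal is correct and follows essentially the same approach as the paper. The paper simply states that asymptotic unitarity ``is a general property of Fourier integral operators whose symbols are half-form morphisms'' and cites \cite{oim_mc} and \cite{oim_LF}; your argument unpacks exactly this mechanism --- adjoint as FIO with inverse graph, composition yielding a Toeplitz operator with diagonal canonical relation, and the half-form morphism property forcing the principal symbol to be $1$ --- and ultimately defers the stationary-phase normalization to the same references.
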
   
This is a general properties of Fourier integral operators whose
symbols are half-form morphisms, cf. \cite{oim_mc} and \cite{oim_LF}. In proposition 2 of \cite{An}, Andersen introduced a metric asymptotically preserved by the quantum representations which is likely to be the same as the one in this paper.

\subsection{Characters of the quantum representation}

Let us turn to the asymptotic of the characters of the quantum
representations. In \cite{oim_LF}, we proved that the trace of a
Fourier integral operator whose canonical relation intersects
transversally the diagonal has an asymptotic expansion. We also computed
explicitly the leading order term. By theorems \ref{theo:FIO} and
\ref{theo:principal-symbol}, these results can be applied to the
mapping class group representations.

\begin{theo} \label{theo:char-quant-repr}
Let $h = ( \ga , \ga_\delta) \in \tilde \Ga_g^1$ and $p$ be a path of the Teichm{\"u}ller space from $\ga.\si$
to $\si$.  Assume that the fixed
  points of the action
  of $\ga$ on $\mo$ are all non-degenerate. Then we have   
$$ \operatorname{Tr} (U_k ( \ga, p )) = \sum_{x\in \mo / \ga.x =x } \frac{ i^{m(
    \ga_\delta ,x )} . u ( \ga,x)^k }{| \det ( \id - L(\ga, x) )|^{1/2}} + O(k^{-1})
$$
where for any fixed point $x \in \mo$,
\begin{itemize}
\item  $L(
\ga, x)$ is the linear tangent map at $x$ of the action of
$\ga$ on $\mo$,
\item the action of $\ga$ on the fiber of the Cherns-Simons bundle at
  $x$ is the multiplication by the complex number $u(\ga , x )$, 
\item  $m(
     \ga_\delta , x ) \in \Z/4\Z$ is the index of the automorphism
     $\Psi_{\ga .\si, \si , x} \circ \ga_\delta $ of $\delta_{\si,x}$.
  \end{itemize}
\end{theo}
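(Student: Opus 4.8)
The plan is to derive Theorem~\ref{theo:char-quant-repr} by combining the Fourier integral operator description of $U_k(h,p)$ from Theorems~\ref{theo:FIO} and~\ref{theo:principal-symbol} with the trace formula for Fourier integral operators established in \cite{oim_LF}. First I would recall that, by Theorem~\ref{theo:FIO}, the Schwartz kernel of $U_k(h,p)$ is, up to $O(k^{-\infty})$, of the form $\bigl(\tfrac{k}{2\pi}\bigr)^m F^k(x,y)\otimes f(x,y,k)$, where the canonical relation is the graph $\Gamma_\ga=\{(\ga.y,y)\,/\,y\in\mo\}$ of the action of $\ga$ on $\mo$. The trace $\operatorname{Tr}(U_k(h,p))$ is obtained by integrating the restriction of this kernel to the diagonal against the Liouville measure; equivalently, in the language of \cite{oim_LF}, one studies the stationary-phase contributions localized at the intersection $\Gamma_\ga\cap\Delta_\mo$, which is precisely the fixed-point set $\{x\in\mo\,/\,\ga.x=x\}$.

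The transversality hypothesis — that the fixed points of $\ga$ on $\mo$ are all non-degenerate, i.e. $\det(\id-L(\ga,x))\neq0$ at each fixed point $x$ — is exactly the condition under which $\Gamma_\ga$ meets $\Delta_\mo$ cleanly and transversally, so the trace estimate of \cite{oim_LF} applies and yields a finite sum over the (isolated, finitely many) fixed points, with an asymptotic expansion in powers of $k^{-1}$. Next I would invoke the explicit leading-order formula from \cite{oim_LF}: the contribution of each fixed point $x$ is the value at $x$ of the prequantum data, namely the eigenvalue $u(\ga,x)$ of the action of $\ga$ on $(L_{CS})_x$ raised to the power $k$, divided by $|\det(\id-L(\ga,x))|^{1/2}$ coming from the Gaussian integral in the normal directions, and multiplied by a phase factor determined by the half-form symbol $f_0$ restricted to $\Gamma_\ga$. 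By Theorem~\ref{theo:principal-symbol}, this restriction is governed by the automorphism $\Psi_{\ga.\si,\si,x}\circ\ga_\delta$ of $\delta_{\si,x}$, and the phase it produces in the trace formula of \cite{oim_LF} is $i^{m(\ga_\delta,x)}$, where $m(\ga_\delta,x)\in\Z/4\Z$ is by definition the index (the Maslov-type index attached to a half-form isomorphism) of that automorphism. Assembling these pieces gives exactly the stated formula.

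The main obstacle I anticipate is the careful bookkeeping of the metaplectic/half-form phase: one must check that the abstract index appearing in the stationary-phase lemma of \cite{oim_LF} coincides with the index $m(\ga_\delta,x)$ of the automorphism $\Psi_{\ga.\si,\si,x}\circ\ga_\delta$ of $\delta_{\si,x}$ as defined just before the theorem. This requires matching the conventions for the square root of $\det(\id-L(\ga,x))$ (a complex determinant on the Kähler tangent space, whose argument is absorbed into the $i^{m}$ factor so that only $|\det(\id-L(\ga,x))|^{1/2}$ remains visible) with the way the half-form symbol transforms under the normal-form reduction at $x$. A secondary point is to verify that the hypotheses of \cite{oim_LF} are genuinely met: that $F$ satisfies $|F(x,y)|<1$ off $\Gamma_\ga$ with the correct non-degeneracy of the phase transversally to $\Gamma_\ga$ (which follows from the curvature $\tfrac{n}{i}\om_\mo$ of $L_{CS}$ and the graph structure), and that the clean-intersection setup degenerates to a transversal one precisely under the non-degeneracy of the fixed points. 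Once these identifications are in place, the formula follows by direct substitution into the trace expansion of \cite{oim_LF}, and the remaining terms are $O(k^{-1})$ as claimed.
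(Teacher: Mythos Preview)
Your proposal is correct and follows essentially the same approach as the paper: the paper deduces the theorem directly by applying the trace formula for Fourier integral operators from \cite{oim_LF} to the operators $U_k(h,p)$, whose canonical relation and principal symbol are given by Theorems~\ref{theo:FIO} and~\ref{theo:principal-symbol}, with the non-degeneracy hypothesis ensuring the required transversality between the graph of $\ga$ and the diagonal. Your identification of the three ingredients --- the prequantum eigenvalue $u(\ga,x)^k$, the Hessian factor $|\det(\id-L(\ga,x))|^{1/2}$, and the half-form index $m(\ga_\delta,x)$ --- matches the paper's, and your caution about matching the index conventions of \cite{oim_LF} with the definition of $m(\ga_\delta,x)$ is well placed but does not constitute a gap, since the paper simply defers that identification to \cite{oim_LF} as well.
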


The indices $m(\ga_\delta , x )$ are defined in $\cite{oim_LF}$.  Let
us discuss the other terms of the formula. 
Let $\Phi$ be a diffeomorphism of $\Si$ fixing $p$. Let $[P] \in \mo$
be such that the restriction of $\Phi$ to $\Si \setminus \{ p\}$ lifts
to an isomorphism $\Phi_P$ of $P$.
Then  the
induced isomorphism of $ \Ad P \rightarrow \Si \setminus \{ p \}$ extends by
continuity at $p$ to an isomorphism $\Ad \Phi_{P}$ of $\Ad P$. We have 
$$ L(\ga, x) = ( \Ad \Phi_P)_* : H^{1}( \Si, \Ad P) \rightarrow H^1 (
\Si, \Ad P) $$
if $\ga$ is the mapping class of $\Phi$ and $x = [P]$. 
 Introduce the mapping torus 
$$ ( \Ad P)_{\Ad \Phi_P} = ( \Ad P \times \R ) / ((\Ad \Phi_P ) (x),
t) \sim ( x, t+1)$$
It is a flat bundle over the mapping torus of $\Phi$. Is is a well
known fact that the Reidemeister torsion of $( \aad P)_{\Ad \Phi_P}$ satisfies
$$ | \tau \bigl(  ( \aad P)_{\Ad \Phi_P} \bigr)| = \frac{1}{| \det ( \id - L(\ga, x)
  )|} $$ 
We refer the reader to \cite{Je:thesis} for a proof. 
The complex numbers $u( \ga, x)$ are related to the Chern-Simons
invariant of the mapping tore of $\Phi_P$. This will be explained in another paper, the difficulty is that
the base of this bundle is not a closed manifold, so we must be
careful in the definition of the Chern-Simons invariant.

\section{Hitchin's connection}  \label{sec:hitchins-connection}

\subsection{Holomorphic differential operators} 

Let $M$ be a complex manifold and $F\rightarrow M$ be a holomorphic line bundle. Consider the algebra of differential operators
acting on $\Ga ( M, F)$. It is the direct sum of the subalgebra of
holomorphic differential operators and the left-ideal $\I$ generated by the
anti-holomorphic derivations. More explicitly, introduce a local
holomorphic trivialization of $F$ and a system  $(z^i)$ of holomorphic
coordinates of $M$. Then each differential operator is of the form 
$$ \sum_{\al \in \Z^n} a_\al
\partial^{\al(1)}_{z^1}... \partial^{\al(n)}_{z^n}  + \sum_{\al, \be
  \in \Z^n, \be \neq 0} a_{\al, \be}
\partial^{\al(1)}_{z^1}... \partial^{\al(n)}_{z^n}
\partial^{\be(1)}_{\bar z^1}... \partial^{\be(n)}_{\bar z^n} ,$$
where the coefficients $a_\al$  and $a_{\al, \be}$ are smooth
functions. The first summand is a holomorphic differential operator
and the second one belongs to the ideal $\I$. 

We denote by $\difh_k (F)$ the bundle whose sections are the
holomorphic differential operators of order $k$ acting on $\Ga (
M,F)$. $\difh_k (F)$  has a natural holomorphic structure, such that
its holomorphic
sections are the holomorphic differential operators with holomorphic
coefficients. Observe that for any holomorphic
differential operator $P$ and smooth section $Z$ of $T^{1,0} M$, one
has
\begin{gather} \label{eq:dbar_op_dif}
 D_{\bar{Z}} P =  [ D_{\bar{Z}} , P] \mod \I
\end{gather}
where $ D_{\bar{Z}}$ denote the derivative of sections of $\difh_k
(F)$ (resp. $F$) on the left hand side (resp. right hand side). 
  
\subsection{Variations of complex structures} \label{sec:vari-compl-struct}

Let $U$ and $M$ be two manifolds. Consider a smooth family $(j_u)_{u
\in U}$ of complex structures of $M$. Denote by $M_u$ the complex manifold $(\{ u \}
\times M, j_u)$. Let $E$ be the complex vector bundle
over $U \times M$ with fibers $$E_{u,x} = T^{1,0}_x M_u.$$ 
We call $E$ the relative holomorphic tangent bundle. We shall
often consider the decomposition of the tangent space of
$U \times M$ given by 
$$ T_{u,x} (U \times M ) \otimes \C  = (T_{u} U \otimes \C ) \oplus
E_{u,x} \oplus \bar{E}_{u,x} $$
Let $X$ be a vector field of $U$. Since $j_u^2 = - \id$, the
derivative of $(j_u)$ with respect to $X$ has the form 
$$ X. j = \mu (X) + \bar  \mu (X) $$
where $\mu$ is a section of $\Hom ( \bar {E}, E)$. Let $Z$ be a smooth
section of $E$. Consider $Z$ and $X$ as vector fields of $U \times
M$. Observe that the Lie bracket $[X,\bar Z]$ is tangent to $E
\oplus \bar E$. Furthermore,  
\begin{gather} \label{eq:der_struct_comp}
[X, \bar {Z} ] = \tfrac{i}{2} \mu (X) ( \bar {Z} ) \mod \bar {E}
.
\end{gather}
which follows easily from the fact that the projection from $E\oplus
\bar E$ onto $\bar E$ is $\frac{1}{2} ( \id + i j )$. 

\subsection{Connection} \label{sec:connection}

Consider as previously two manifolds $U$ and $M$ with a smooth
family $(j_u)_{ u \in U}$ of complex structures. Let $(F_u \rightarrow
M_u)_{u \in U}$ be a smooth family of holomorphic line 
bundles. Assume that $M$ is compact and that the dimension
of the space $H^{0}(M_u, F_u)$ of holomorphic sections does not depend on
$u$. Then by elliptic regularity, there exists a smooth vector bundle
$\Hilb$ with base $U$ and
fibers $H^{0} (M_u, F_u)$,  such that its smooth sections are the smooth
families of holomorphic sections (cf \cite{BeGeVe}, chapter 9.2).  

Let $F$ and $\difh_2 (F)$ be the bundles over $U\times M$ which
restrict over any slice $M_u$ to  $F_u $ and $\difh_2(F_u)$
respectively. If $s$ is a section of $F$, we denote by $\bar{\partial}
s$ the section of $F \otimes \bar E ^*$ which restricts over $M_u$ to
$\bar \partial_u s_u$. So a smooth section of $\Hilb$ is by definition a
smooth section of $F$ satisfying $\bar \partial s = 0$. We use the same
notation with $\difh_2 (F)$ and more generally with any family of holomorphic
bundles.

Introduce a connection $\nabla$ on
$F$, such that its restriction to any $M_u$ is compatible
with the holomorphic structure of $F_u$. Introduce a section  $P$ of
$\difh_2 (F) \otimes p^*(T^* U)$ where $p$ is the projection from
$U \times M$ to $U$. 
We would like to define a connection
on $\Hilb \rightarrow U$ whose covariant derivative in the direction
of $X \in \Ga (U, TU)$ is given by
\begin{gather} \label{eq:connection} 
 \nabla_X +  P(X) .
\end{gather}
The following lemma provides a sufficient condition for the
connection to be well-defined. We denote by $R^{\nabla}$ the curvature of
$\nabla$. 

\begin{prop} \label{prop:conn_bien_def}
Assume that for any section $Z$ of $E$, we have 
$$ [\bar \partial P(X)](\bar{Z})  = \frac{i}{2} \nabla_{\mu (X) (\bar{Z} )} + R^{\nabla}
(X,\bar Z) $$
Then if $s$ is a section of $F$ whose restriction to each $M_u$ is
holomorphic, the same holds for $(\nabla_X +  P(X)) s$.
\end{prop}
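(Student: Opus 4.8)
The plan is to verify directly that $\bar\partial\bigl[(\nabla_X + P(X))s\bigr] = 0$ on each slice $M_u$, using the hypothesis on $\bar\partial P(X)$. So fix a section $Z$ of $E$ and compute the contraction of $\bar\partial\bigl[(\nabla_X+P(X))s\bigr]$ against $\bar Z$, which amounts to evaluating $D_{\bar Z}\bigl((\nabla_X + P(X))s\bigr)$ where $D_{\bar Z}$ is the relevant covariant derivative along the anti-holomorphic direction $\bar Z$ (all these derivatives being taken on $U\times M$, but with $\bar Z$ tangent to the slice). The two terms are handled separately.

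For the $\nabla_X$ term, I would write $D_{\bar Z}\nabla_X s = \nabla_X D_{\bar Z} s + R^{\nabla}(\bar Z, X)s + \nabla_{[\bar Z, X]} s$. Since $s$ restricts to a holomorphic section on each slice, $D_{\bar Z}s = \bar\partial s(\bar Z) = 0$; but one must be careful that $\nabla_X D_{\bar Z}s$ is not simply zero, because $D_{\bar Z}s$ vanishes only on slices and $X$ is transverse — instead one rewrites $\nabla_X D_{\bar Z} s$ using that $\bar\partial s = 0$ identically, differentiating this relation along $X$; this is where the variation of the complex structure enters. Concretely, differentiating $\bar\partial s = 0$ and using (\ref{eq:der_struct_comp}), the term $[X,\bar Z]$ has a component $\tfrac{i}{2}\mu(X)(\bar Z)$ along $E$ (which contributes nothing to $\bar\partial s$ since $s$ is holomorphic on slices) and a component along $\bar E$; tracking this carefully produces exactly the combination that cancels $\nabla_{[\bar Z,X]}$ up to the term $\nabla_{\frac{i}{2}\mu(X)(\bar Z)}s$. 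Using the sign conventions $R^\nabla(X,\bar Z) = -R^\nabla(\bar Z, X)$, one is left with $D_{\bar Z}\nabla_X s = -\tfrac{i}{2}\nabla_{\mu(X)(\bar Z)}s - R^{\nabla}(X,\bar Z)s$ modulo terms that annihilate holomorphic sections.

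For the $P(X)$ term, I would use (\ref{eq:dbar_op_dif}): since $P(X)$ restricts on each slice to a holomorphic differential operator $D_{\bar Z}(P(X)s) = (D_{\bar Z}P(X))s + P(X)D_{\bar Z}s = [\bar\partial P(X)](\bar Z)\,s \bmod \I$, and the $\I$-part plus the $P(X)D_{\bar Z}s$ term both kill the holomorphic section $s$. Now invoke the hypothesis: $[\bar\partial P(X)](\bar Z) = \tfrac{i}{2}\nabla_{\mu(X)(\bar Z)} + R^{\nabla}(X,\bar Z)$, so $D_{\bar Z}(P(X)s) = \tfrac{i}{2}\nabla_{\mu(X)(\bar Z)}s + R^{\nabla}(X,\bar Z)s$ modulo terms vanishing on $s$. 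Adding the two contributions, everything cancels and $D_{\bar Z}\bigl((\nabla_X+P(X))s\bigr) = 0$ for all $Z$, i.e. $\bar\partial\bigl[(\nabla_X+P(X))s\bigr]=0$ on each slice, which is the claim.

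The main obstacle is the bookkeeping in the first step: making precise the statement ``$\nabla_X$ of a slicewise-holomorphic section is slicewise-holomorphic up to the $\mu(X)$ correction and curvature,'' which requires differentiating the identity $\bar\partial s = 0$ as an identity on $U\times M$ and carefully extracting, via (\ref{eq:der_struct_comp}), the piece of $[X,\bar Z]$ lying in $E$ versus $\bar E$ — only the $\bar E$-component survives the pairing with $\bar\partial s$, and the $E$-component $\tfrac{i}{2}\mu(X)(\bar Z)$ is precisely what shows up in the hypothesis. Once the sign and factor conventions (the factor $\tfrac12$ in the projector $\tfrac12(\id + ij)$, and the orientation of $R^\nabla$) are pinned down, the cancellation is forced. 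I would also note in passing that one should check the computation is independent of the chosen local holomorphic frame, but this is automatic since every term is intrinsically defined.
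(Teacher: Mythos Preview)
Your proposal is correct and follows essentially the same route as the paper: compute $[\nabla_{\bar Z},\nabla_X+P(X)]$ modulo the ideal $\I$ and see that the hypothesis makes it vanish. The one place where you overcomplicate matters is the first term: since $Z$ is by assumption a section of $E$, the vector $\bar Z_{u,x}$ lies in $T^{0,1}_xM_u$ at every point, so $\nabla_{\bar Z}s$ is identically zero on $U\times M$ (not ``only on slices''), and $\nabla_X\nabla_{\bar Z}s=0$ trivially; the paper simply writes $[\nabla_{\bar Z},\nabla_X]=\nabla_{[\bar Z,X]}+R^\nabla(\bar Z,X)$ and reads off the result from (\ref{eq:der_struct_comp}) without the detour you describe.
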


\begin{proof} 
We show that the assumption implies that for any smooth section $s$
of $F$ and any point $u$ of $U$  
$$\bigl( [\nabla_{\bar Z}, \nabla_X +
P(X) ] s \bigr)_u = Q_u s_u$$  
with $Q_u$ a differential operator of $F_u$ in the ideal $\I_u$ generated
by the anti-holomorphic derivations. 
By (\ref{eq:der_struct_comp}), we have that
$$ [ \nabla_{\bar Z} , \nabla_X ] = - \tfrac{i}{2} \nabla_{\mu (X) (
  \bar{Z}) } - R^\nabla ( X, \bar Z) \mod \I $$
By (\ref{eq:dbar_op_dif}), we have that 
$$ [ \nabla_{\bar Z} , P(X) ] = [\bar \partial P(X) ] ( \bar Z) \mod \I.
$$
The conclusion follows.
\end{proof}

\subsection{Unicity}\label{sec:unicity}
Let us discuss the unicity of a connection of the form
(\ref{eq:connection}) and satisfying the assumption of proposition \ref{prop:conn_bien_def}. Assume that $M$ is connected and that for any $u$, $M_u$ has no
holomorphic vector field. Suppose that $(\nabla, P)$ satisfies the
hypothesis of proposition \ref{prop:conn_bien_def} for any vector field $X$.   Let $( \nabla ' , P')$ be
another pair satisfying the same assumption. Assume that for any $u\in
U$ and $X \in T_uU$,  $P(X)_u$
and $P'(X)_u$ are second-order differential operators with the same
principal symbol. Then there
exists a form $\al \in \Om^1 (U)$ such that
\begin{gather} \label{eq:unicite_con}
 \nabla'_X + P' (X) = \nabla_X + P(X) + \al(X)\id 
\end{gather}
for any vector field $X$ of $U$. 

Indeed, $\nabla$ and $\nabla'$
differ by a one-form $\be \in \Om^{1} ( U \times M , \End F)$ which
vanishes in the directions tangent to $\bar E$. Let $\tilde{\be}$ be the
section of $\End F \otimes p^* T^*U$ such that $\be (X) = \tilde \be
( X)$, for any vector field $X$ of $U$. Then it is easily checked that
the pair $(\nabla' - \be, P' + \tilde \be)$ satisfies the assumption of
proposition \ref{prop:conn_bien_def}. 
Since
$$ \nabla'_X + P' (X) = \nabla'_X - \be(X) +P'(X) + \tilde \be(X) =
\nabla_X + P'(X) + \tilde \be(X)$$
we may assume that $\nabla = \nabla'$. 
Next, the hypothesis of proposition \ref{prop:conn_bien_def} implies that
\begin{gather} \label{eq:bar_op}
\bar{\partial} (P(X) - P' (X) ) = 0
\end{gather}
Since $P(X)_u$ and $P'(X)_u$
have the same principal symbol, $P(X)_u - P'(X)_u$ is a first order
holomorphic differential operator. Since $H^0 (M_u, E_u) = 0$,
equation 
(\ref{eq:bar_op}) implies that $P(X)_u -
P'(X)_u$ is a zero-order holomorphic differential operator. In other
words, $P(X)_u - P'(X)_u$ is the multiplication by a function. By
equation (\ref{eq:bar_op}), this function is holomorphic, hence
constant because $M$ is compact and connected. This proves that $P' = P + \al$ with $\al \in \Om^1 (U)$.  
The desired equation (\ref{eq:unicite_con}) follows. 

\subsection{A preliminary computation}

To apply proposition \ref{prop:conn_bien_def}, we need to compute the
$\bar \partial$ of some second order differential  operator. The
parameter space doesn't enter in the calculation, so we assume in this
subsection that $U = \{ pt \}$. Suppose that the complex manifold $M$
has a K{\"a}hler metric, and that the holomorphic line bundle $F$ has a
Hermitian metric. Hence $F$ and $T^{1,0}M$ have canonical connections
compatible with the metric and the holomorphic structure (Chern connection). Let $G $ be a section
of the second symmetric tensor power $S^2 ( T^{1,0} M)$. Define the
holomorphic differential operator acting on the sections of $F$
\begin{gather} \label{eq:def_delta}
 \Delta ^G  s = \operatorname{Tr} ^{\End (T^{1,0}M) } ( \nabla
 ^{T^{1,0}M \otimes F} ( G \lrcorner  \nabla ^F s ))
\end{gather} 
More explicitly if $\partial_1, \ldots  , \partial_n$ is a local
frame of $T^{1,0}M$ and $\ell_1, \ldots \ell_n$ is the dual frame,
$$ \Delta ^G s = \sum _k \ell_k (  \nabla_k ^{T^{1,0}M \otimes F} (\sum_{i,j}  G_{ij} \partial_i \otimes \nabla_j ^F s ))$$
where $\nabla_i$ is the covariant derivative with respect to
$\partial_i$ and $G = \sum G_{ij} \partial_i \otimes \partial_j$.
 
\begin{prop} \label{prop:prel-comp}
Assume that $G$ is a holomorphic section of $S^2 ( T^{1,0} M)$, then 
$$ \bar \partial \Delta ^G = \sum_{i,j}( 2 R^F + R^{\operatorname{det}} ) ( \cdot , \partial_i) G_{ij} \nabla_j^F  + \theta ^F $$ 
where $R ^F$ is the curvature of $ \nabla ^F$, $R ^{\operatorname{det}}$ is the curvature of the Chern connection of $\wedge ^n T^{1,0} M$ and $\theta ^F$ is the one-form of $M$ given by 
$$  \theta ^F ( \bar Z) = \sum _{k,i,j} \ell_k ( \nabla_k ^{T^{1,0} M} ( R ^F ( \bar {Z} , \partial_i ) G_{ij} \partial_j ))$$
for any (local) holomorphic section $Z$ of $T^{1,0} M$.  
\end{prop}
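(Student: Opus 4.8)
The plan is to compute $\bar\partial\Delta^G$ directly from the defining formula (\ref{eq:def_delta}) by moving the anti-holomorphic derivative past the two covariant derivatives and the contraction with $G$, using that $G$ is holomorphic and that $\partial_1,\dots,\partial_n$ may be chosen to be a \emph{holomorphic} local frame of $T^{1,0}M$ (so that $\bar\partial\partial_i=0$ and $\bar\partial G_{ij}=0$ and $\bar\partial\ell_k=0$). Applying $D_{\bar Z}$ to $\Delta^G s$ for a holomorphic section $Z$ of $T^{1,0}M$ and commuting it inward, the only terms that survive are those where $\bar Z$ hits one of the two covariant derivatives $\nabla^{T^{1,0}M\otimes F}$ or $\nabla^F$. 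Commuting $\nabla_{\bar Z}$ past $\nabla_j^F$ produces the curvature term $R^F(\bar Z,\partial_j)$ (the connection being compatible with the holomorphic structure, $\nabla_{\bar Z}$ applied to a holomorphic section vanishes, so all contributions are commutators), and commuting $\nabla_{\bar Z}$ past $\nabla_k^{T^{1,0}M\otimes F}$ produces the curvature of $T^{1,0}M\otimes F$, namely $R^{T^{1,0}M}(\bar Z,\cdot)+R^F(\bar Z,\cdot)$ acting on the appropriate factor.

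Next I would organize the resulting terms into two groups. The first group collects the curvature contributions that, after taking the trace $\mathrm{Tr}^{\End(T^{1,0}M)}$, yield a first-order operator in $\nabla^F$: here the $R^F(\bar Z,\partial_j)$ from the inner derivative and the $R^F$-part of the curvature of $T^{1,0}M\otimes F$ from the outer derivative combine to give $2R^F(\bar Z,\partial_i)G_{ij}\nabla_j^F$, while the $R^{T^{1,0}M}$-part, after tracing, contributes $R^{\det}(\bar Z,\partial_i)G_{ij}\nabla_j^F$ — this is the standard identity that the trace of the curvature of $T^{1,0}M$ acting on a vector equals the curvature of $\det T^{1,0}M=\wedge^n T^{1,0}M$ evaluated against that vector, i.e. $\mathrm{Tr}^{\End}\bigl(R^{T^{1,0}M}(\bar Z,\cdot)\bigr) = R^{\det}(\bar Z,\cdot)$ on $T^{1,0}M$. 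The second group is the term where $\nabla_{\bar Z}$, after producing $R^F(\bar Z,\partial_i)$ inside, is then still differentiated by the remaining outer $\nabla_k^{T^{1,0}M}$: this is exactly the zero-order-in-$s$ piece $\theta^F(\bar Z)=\sum_{k,i,j}\ell_k\bigl(\nabla_k^{T^{1,0}M}(R^F(\bar Z,\partial_i)G_{ij}\partial_j)\bigr)$, applied to $s$ via multiplication — note $\theta^F$ acts on $s$ with no derivative since $R^F$ is a two-form with scalar (endomorphism of a line bundle) values.

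The main obstacle, and the place to be careful, will be the bookkeeping of \emph{where} the outer covariant derivative $\nabla_k^{T^{1,0}M\otimes F}$ lands once the inner $\nabla_{\bar Z}\nabla_j^F$ has been converted to $R^F(\bar Z,\partial_j)$: one must distinguish the case where $\nabla_k$ differentiates the curvature-times-$G$ tensor (giving $\theta^F$) from the case where $\nabla_k$ was already consumed in producing a curvature of $T^{1,0}M\otimes F$ against $\bar Z$ (giving the $2R^F+R^{\det}$ terms), and check that no cross terms are double-counted. A clean way to do this is to work from the start in a holomorphic frame at a point and use Leibniz once for $D_{\bar Z}$ on the outermost expression $\ell_k(\nabla_k^{\cdots}(\dots))$, then recurse; since $\ell_k$ and $\partial_i$ and $G_{ij}$ are holomorphic, $D_{\bar Z}$ passes through them and the three surviving terms are precisely (a) $\nabla_{\bar Z}$ hitting the outer $\nabla_k$-derivative, producing $R^{T^{1,0}M\otimes F}(\bar Z,\partial_k)$ contracted appropriately, (b) $\nabla_{\bar Z}$ hitting the inner $\nabla_j^F$, producing $R^F(\bar Z,\partial_j)$ which is then still inside the outer $\nabla_k$ (this is $\theta^F$), and (c) the term where the inner derivative was already curvature and we differentiate — but (b) and (c) are the same term. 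Finally one verifies that $\bar\partial\Delta^G$ is indeed a one-form valued in first-order holomorphic differential operators plus the explicit $\theta^F$, matching the claimed formula; the fact that everything is expressed in a holomorphic frame guarantees the coefficients patch to a globally well-defined section.
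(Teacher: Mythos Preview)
Your overall strategy is correct and coincides with the computation the paper cites from Hitchin: commute $\nabla_{\bar Z}$ inward, use holomorphicity of $G$ and of the frame to kill all but the curvature commutators, and split the result into the first-order piece $(2R^F+R^{\det})G\nabla^F$ and the zero-order piece $\theta^F$. Your accounting of the two $R^F$ contributions and of $\theta^F$ is essentially right, though a bit tangled: the second $R^F$ does not come from an independent ``inner'' curvature term but from the Leibniz split of your term (b), i.e.\ $\nabla_k^{T\otimes F}\bigl(R^F(\bar Z,\partial_j)G_{ij}\partial_i\otimes s\bigr)$ produces both $\theta^F(\bar Z)\,s$ (when $\nabla_k^{T}$ hits the vector) and $R^F(\bar Z,\partial_i)G_{ij}\nabla_j^F s$ (when $\nabla_k^F$ hits $s$).

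There is, however, a genuine gap at the step where you obtain $R^{\det}$. The contraction that actually appears from the $R^{T^{1,0}M}$-part of the outer curvature is
\[
\sum_{k}\ell_k\bigl(R^{T^{1,0}M}(\bar Z,\partial_k)\,\partial_i\bigr)\;=\;\sum_k\bigl[R^{T^{1,0}M}(\bar Z,\partial_k)\bigr]^{k}_{\ i},
\]
which is \emph{not} the endomorphism trace $\sum_m[R^{T^{1,0}M}(\bar Z,\partial_i)]^{m}_{\ m}=R^{\det}(\bar Z,\partial_i)$ you invoke. The two coincide only because on a K\"ahler manifold the curvature of the Chern connection on $T^{1,0}M$ satisfies the symmetry $R^{m}_{\ i\,k\bar l}=R^{m}_{\ k\,i\bar l}$ in the two holomorphic lower indices. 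This is exactly the point the paper singles out (``the assumption that the metric is K\"ahler is used for certain symmetries of the curvature tensor of $T^{1,0}M$''), and without it the computation does not close up to give $R^{\det}$. You should insert this symmetry explicitly at that step.
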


This is a slight generalization of a computation in \cite{Hi}, page
364. The assumption that the metric is K{\"a}hler is used for certain symmetries of the curvature tensor of $T^{1,0 } M$. 

\subsection{The setting} \label{sec:settting}

Assume now that $M$ is a symplectic manifold and that $(j_u)_{u \in
  U}$ is a family of compatible complex structures. So each $M_u$ is a
K{\"a}hler manifold. 

Let $L_M \rightarrow M$ be a prequantum bundle, that is a Hermitian
line bundle with a connection of curvature $\frac{1}{i} \om$. For any
$u$, denote by $L_u \rightarrow M_u$ the bundle $L_M$ with the
holomorphic structure compatible with the connection and $j_u$. Denote
by $L$ the pull-back of $L_M$ by the projection $U \times M \rightarrow M$ and endow $L$ with the pull-back connection. 

Consider a pair $(\delta, \varphi)$ which consists of a  line bundle $\delta$ over $U \times M$ with an isomorphism $\varphi$ from $\delta
^2$ to $\wedge ^{\top} E^*$. Such a pair exists if and only if the second
Stiefel-Whitney class of $M$ vanishes. The restriction $\delta_u$ of
$\delta$ to $M_u$ has  holomorphic and Hermitian structures determined
by the condition that the isomorphism $\varphi_u : \delta_u^2
\rightarrow \wedge ^{\top} E_u^*$ is an isomorphism of Hermitian
holomorphic bundles. We call $( \delta_u , \varphi_u)_{u \in U}$ a family of
half-form bundles. 

Let us define a connection on $\delta$. First consider the connection
$\nabla^{E \oplus \bar E}$ on $ E \oplus \bar{E}$ such that its
restriction to each slice $M_u$ is the
Levi-Civita connection of the K{\"a}hler metric of $M_u$ and the covariant
derivative in a direction tangent to $U$ is the obvious one. This makes sense because the restriction of $E \oplus \bar{E}$ to $U \times
\{ x\}$ is the trivial bundle with fiber $T_{x}M \otimes \C$. 
Next we consider the following connection on $E$
$$ \nabla ^E  = \pi \circ \nabla^{E \oplus \bar E}, $$ 
where $\pi = \frac{1}{2}(  \id - i j)$ is
the projection of $E \oplus \bar{E}$ onto $E$ with kernel $\bar
E$. This defines a connection on the associated bundle
$\wedge^{\operatorname{top}} E^*$ and finally a connection
$\nabla^\delta$  on $\delta$.

Our aim is to apply the construction of chapter \ref{sec:connection} to the bundle $F= L^k \otimes
\delta$. So consider the vector space  
$$ \Hilb_{k, u } = H ^{0} (M_u , L_u ^k \otimes \delta_u) $$
with the scalar product obtained by integrating the punctual Hermitian
product of sections against the Liouville measure of $M$. For any
compact set $C$ of $U$, if $k$ is sufficiently large, the dimension of
$\Hilb_{k, u}$ is constant when $u$ runs over $C$. Here we do the
global assumption that there exists $k_0$ such that the dimension of
$\Hilb_{k, u}$ does not depend on $u \in U$ when $k \geqslant k_0$. 
For $k \geqslant k_0$, the spaces $\Hilb_{k, u }$ are the fibers of a
smooth vector bundle $\Hilb_k$ over $U$, whose sections are the smooth
families of holomorphic sections. In the sequel we always assume that $k
\geqslant k_0$.

\subsection{Existence of the connection} \label{sec:existence}

Consider the same data as in section \ref{sec:settting}. To define a
connection on the bundle $\Hilb_{k} \rightarrow U$, we assume first that $M$ is simply
connected. Then for any tangent vector $X$ of $U$ at $u$, introduce the section $G(X)$ of $S^{2 } (T^{1,0}M_u)$ such that
\begin{gather} \label{eq:def_G}
  \sum _{i,j}  G_{ij} \om ( \partial_i , \cdot )   \partial_j = \mu
(X)_u, \qquad  G(X)= G_{ij} \partial_i \otimes \partial_j
\end{gather}
Our second assumption is that these sections $G(X)$
are holomorphic.

To apply proposition \ref{prop:conn_bien_def} to the bundle $F= L^k
\otimes \delta$, we need to compute the curvature $R ^{\delta}$ of $\nabla ^{\delta}$. The following result is proved in \cite{AnGaLa}.
\begin{prop} \label{prop:courbure_demi_forme}
For any tangent vector $X \in T_uU$  and section $Z$ of $E$, we have
$$ 4 R^{\delta} (X, \bar{Z} ) = \theta ^{L_u} (\bar Z)$$ 
where $\theta ^{L_u}$ is defined as in proposition \ref{prop:prel-comp}.
\end{prop}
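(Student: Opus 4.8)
The plan is to compute $R^\delta(X,\bar Z)$ directly from the definition $\nabla^E = \pi\circ\nabla^{E\oplus\bar E}$, where $\pi=\tfrac12(\id-ij)$ and $\nabla^{E\oplus\bar E}$ is Levi-Civita along the slices and the obvious connection in the $U$-directions. Since the curvature of $\nabla^{E\oplus\bar E}$ in a mixed direction $(X,\bar Z)$ only sees the variation of the Levi-Civita connections with the parameter, and since $\nabla^{E\oplus\bar E}$ restricted to $U\times\{x\}$ is trivial, the bracket $[\nabla^{E\oplus\bar E}_X,\nabla^{E\oplus\bar E}_{\bar Z}]-\nabla^{E\oplus\bar E}_{[X,\bar Z]}$ is essentially $-\nabla^{E\oplus\bar E}_{[X,\bar Z]}$ corrected by the $X$-derivative of the Levi-Civita Christoffel symbols; here one uses formula (\ref{eq:der_struct_comp}), namely $[X,\bar Z]=\tfrac i2\mu(X)(\bar Z)\bmod\bar E$, to identify the relevant term. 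First I would write out $R^{\nabla^E}(X,\bar Z)$ by inserting the projector $\pi$ and carefully tracking the three pieces: the curvature of $\nabla^{E\oplus\bar E}$, the terms coming from $X.\pi$ (equivalently $X.j=\mu(X)+\overline{\mu(X)}$), and the terms coming from $\bar Z.\pi$ along the slice.

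Next, I would pass from $R^{\nabla^E}$ to $R^\delta$. Since $\delta^2\simeq\wedge^{\top}E^*$, the connection $\nabla^\delta$ has curvature $R^\delta=-\tfrac12\operatorname{Tr}^{E} R^{\nabla^E}$ (trace on $\End E$, with the sign and the factor $\tfrac12$ coming from the square root of the determinant line). So the whole computation reduces to taking the $\End E$-trace of the expression obtained in the first step. The key point is that in a Kähler slice the Levi-Civita connection is the Chern connection of $T^{1,0}M_u$, so its curvature in a $(1,0)$–$(0,1)$ direction is of type $(1,1)$ and its trace is $R^{\det}$; and the variation of the Christoffel symbols, once traced and combined with the identity (\ref{eq:der_struct_comp}), produces precisely an expression of the shape $\ell_k(\nabla^{T^{1,0}M}_k(R^{F}(\bar Z,\partial_i)G_{ij}\partial_j))$ — that is, $\theta^{L_u}(\bar Z)$ — up to the universal constant $1/4$. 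The factor $4$ is exactly bookkeeping: a $\tfrac12$ from $\pi=\tfrac12(\id-ij)$ entering through $X.\pi$, a further $\tfrac12$ from $[X,\bar Z]=\tfrac i2\mu(X)(\bar Z)$, combined with the $-\tfrac12\operatorname{Tr}$ relating $R^\delta$ to $R^{\nabla^E}$.

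The main obstacle will be the precise matching of constants and signs between the two conventions — the definition of $\theta^{L_u}$ in proposition \ref{prop:prel-comp} (which was set up for a fixed complex structure, with $G$ there playing the role of $G(X)$ here via (\ref{eq:def_G})) and the connection $\nabla^\delta$ built from $\pi\circ\nabla^{E\oplus\bar E}$. In particular one must be careful that the covariant derivative $\nabla_k^{T^{1,0}M}$ appearing in $\theta^{L_u}$ is the same as the one induced on $E$ by $\nabla^E$ along the slice (which it is, since along $M_u$ the slice connection of $\nabla^{E\oplus\bar E}$ is Levi-Civita $=$ Chern), and that the curvature term $R^F$ there is $R^{L_u}$ for the choice $F=L_u$. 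Since this proposition is attributed to \cite{AnGaLa}, I would present the computation as a verification following the scheme above rather than a new argument, emphasizing that the only nontrivial input is the Kähler symmetry of the Levi-Civita curvature tensor used exactly as in proposition \ref{prop:prel-comp}, and that everything else is the linear-algebra identity (\ref{eq:der_struct_comp}) together with the trace relation for the half-form connection.
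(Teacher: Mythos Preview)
The paper does not give its own proof of this proposition; it simply states the result and attributes it to \cite{AnGaLa}. There is therefore no argument in the paper to compare your proposal against.

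That said, your outline is the natural direct computation and is what one expects to find in \cite{AnGaLa}. The structural points you isolate are correct: $R^\delta=-\tfrac12\operatorname{Tr}^E R^{\nabla^E}$; along each slice $M_u$ the connection $\nabla^{E\oplus\bar E}$ is Levi-Civita, hence Chern, hence preserves the splitting $E\oplus\bar E$, so the second fundamental forms in the $\bar Z$-direction vanish and one gets $R^{\nabla^E}(X,\bar Z)=\pi\,R^{E\oplus\bar E}(X,\bar Z)|_E$; and the identification with $\theta^{L_u}(\bar Z)$ goes through (\ref{eq:def_G}) together with $R^{L_u}=\tfrac1i\om$, which yields $\sum_{i,j}R^{L_u}(\bar Z,\partial_i)G_{ij}\partial_j=i\,\mu(X)(\bar Z)$, so that $\theta^{L_u}(\bar Z)$ is a divergence of $\mu(X)(\bar Z)$. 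One small correction: your description of $R^{E\oplus\bar E}(X,\bar Z)$ as ``essentially $-\nabla^{E\oplus\bar E}_{[X,\bar Z]}$ corrected by the $X$-derivative of the Christoffel symbols'' is misleading as stated, since the curvature is tensorial and the two pieces combine rather than one dominating. It is cleaner to compute $\pi\,R^{E\oplus\bar E}(X,\bar Z)|_E$ directly in a local frame, where the $X$-variation of the K{\"a}hler metric (and hence of its Chern connection) enters. Your accounting for the factor $4$ is plausible in spirit, but this is precisely the place where the computation should be written out in full rather than argued by inspection.
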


As a consequence of proposition \ref{prop:prel-comp}, we have for any tangent vector $X \in
T_uU$ and section $Z$ of $E_u$, 
\begin{xalignat*}{2} 
 \bigl[ \bar \partial \Delta ^{G(X)} \bigr] (\bar Z)  = & 2i k\nabla
 ^{L^k \otimes \delta} _{\mu (X) (\bar Z)} +  k \theta^{L_u} ( \bar Z)  + \theta
 ^{\delta_u} ( \bar Z)  \\ 
= &  4k \bigl[
\tfrac{i}{2} \nabla ^{L^k \otimes \delta} _{\mu (X) (\bar Z)} +
R^\delta (X, \bar Z) \bigr] + \theta ^{\delta_u} (\bar Z)
\end{xalignat*}
by proposition \ref{prop:courbure_demi_forme}. For $k=0$, this implies that $\bar
\partial \theta ^{\delta_u} = 0$. Since $M$ is simply connected, by
Hodge decomposition, the Dolbeault cohomology group $H ^{1 , 0 }
(M_u)$ vanishes for any $u$. So there exists a function $H (X)$ such
that 
\begin{gather} \label{eq:def_H}
\bar \partial H(X) =  \theta ^{\delta_u}, \qquad \int_M H(X) \;
\om^n = 0
\end{gather}
This defines a section $H \in \Ga ( U \times M , p^* (T^*U))$. Then 
\begin{gather*} 
  \nabla ^{L^k \otimes \delta }_X + \frac{1}{4k} \bigl(\Delta ^{G(X)} - H(X) \bigr)
\end{gather*}  
satisfies the assumption of proposition \ref{prop:conn_bien_def}. 
To summarize, we have prove the following theorem. This was the main theorem of \cite{AnGaLa}.

\begin{theo}\label{theo:Hitchin}  Let $M$ be a simply-connected compact
  symplectic manifold endowed with a
  prequantum bundle $L_M$ and a family $(j_u, \delta_u, \varphi_u)_{u
    \in U}$  of compatible complex structures with half-form bundles.  Assume
  that the sections $G(X)$, $X \in TU$,
  defined in (\ref{eq:def_G}) are holomorphic. Then the bundle
  $\Hilb_k \rightarrow U$ with fibers $\Hilb_{k,u} = H^0(M_u, L^k_u
  \otimes \delta_u)$ admits a connection $\nabla^{\Hilb_k}$ defined by   
\begin{gather} \label{eq:def_connection_Hitchin}
 \nabla^{\Hilb_k} _X: =\nabla ^{L^k \otimes \delta }_X + \frac{1}{4k}
 \bigl(\Delta ^{G(X)} - H(X) \bigr), \qquad X \in \Ga(U, TU) 
\end{gather}  
where $\Delta^{G(X)}$ and $H(X)$ are given by the equations
(\ref{eq:def_delta}) and (\ref{eq:def_H})
\end{theo}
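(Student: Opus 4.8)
The plan is to reduce everything to proposition~\ref{prop:conn_bien_def} applied to the line bundle $F = L^k\otimes\delta$ over $U\times M$, with $\nabla = \nabla^{L^k\otimes\delta}$ and the candidate zeroth-order-in-$U$ operator $P(X) = \tfrac{1}{4k}\bigl(\Delta^{G(X)} - H(X)\bigr)$. Once the hypothesis of that proposition is verified for every vector field $X$ of $U$, it tells us that $\nabla_X + P(X)$ maps families of holomorphic sections to families of holomorphic sections, i.e. it acts on $\Ga(U,\Hilb_k)$; and since $X\mapsto G(X)$ is $\R$-linear (by (\ref{eq:def_G})), so are $X\mapsto\Delta^{G(X)}$ and, by the normalisation in (\ref{eq:def_H}), $X\mapsto H(X)$, so $P$ is $C^\infty(U)$-linear in $X$ and (\ref{eq:def_connection_Hitchin}) genuinely defines a connection. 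Thus the only thing to prove is the identity
$$[\bar\partial P(X)](\bar Z) = \tfrac{i}{2}\nabla^{L^k\otimes\delta}_{\mu(X)(\bar Z)} + R^{L^k\otimes\delta}(X,\bar Z)$$
for every local holomorphic section $Z$ of $E$, and here the right-hand curvature term is just $R^\delta(X,\bar Z)$ because $L$ carries the pulled-back connection and so has no curvature in the directions tangent to $U$.

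First I would compute $\bar\partial\Delta^{G(X)}$. Since $G(X)$ is assumed holomorphic, proposition~\ref{prop:prel-comp} applies with $F = L^k\otimes\delta$ and gives $\bar\partial\Delta^{G(X)} = \sum_{i,j}(2R^F + R^{\operatorname{det}})(\cdot,\partial_i)G_{ij}\nabla_j^F + \theta^F$, where $R^F = kR^L + R^\delta$. The point is that $\varphi$ identifies $\delta^2$ with $\wedge^{\top}E^* = (\wedge^n T^{1,0}M)^*$, so on each slice $2R^\delta = -R^{\operatorname{det}}$ and hence $2R^F + R^{\operatorname{det}} = 2kR^L$; since $R^L = \tfrac{1}{i}\om$, contracting the first term against $\bar Z$ and using the defining relation (\ref{eq:def_G}) for $G(X)$ turns it into $2ik\,\nabla^{L^k\otimes\delta}_{\mu(X)(\bar Z)}$. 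The one-form $\theta^F$ is linear in $R^F$, so $\theta^F = k\theta^{L_u} + \theta^{\delta_u}$, and proposition~\ref{prop:courbure_demi_forme} rewrites $k\theta^{L_u}(\bar Z) = 4kR^\delta(X,\bar Z)$. Altogether
$$[\bar\partial\Delta^{G(X)}](\bar Z) = 2ik\,\nabla^{L^k\otimes\delta}_{\mu(X)(\bar Z)} + 4kR^\delta(X,\bar Z) + \theta^{\delta_u}(\bar Z),$$
so dividing by $4k$ and subtracting $\tfrac{1}{4k}\bar\partial H(X) = \tfrac{1}{4k}\theta^{\delta_u}$ yields exactly the required identity.

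It remains to produce $H(X)$. Specialising the computation above to $k=0$, i.e. $F=\delta$, the curvature coefficient $2R^\delta + R^{\operatorname{det}}$ vanishes identically, so $\bar\partial\Delta^{G(X)} = \theta^{\delta_u}$; applying $\bar\partial$ once more and using $\bar\partial^{\,2}=0$ on the family of holomorphic bundles $\difh_2(\delta)$ shows that $\theta^{\delta_u}$ is $\bar\partial$-closed. Since $M$ is simply connected, $H^{0,1}(M_u)=0$ by Hodge decomposition, so $\theta^{\delta_u} = \bar\partial H(X)$ for a unique function $H(X)$ with $\int_M H(X)\,\om^n = 0$, and this depends smoothly on the base point, giving the section $H\in\Ga(U\times M, p^*T^*U)$ of (\ref{eq:def_H}). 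The genuinely delicate part is the curvature bookkeeping in the second paragraph: the collapse $2R^F + R^{\operatorname{det}} = 2kR^L$ and the matching of $k\theta^{L_u}$ with $4kR^\delta(X,\bar\cdot)$ via proposition~\ref{prop:courbure_demi_forme} are exactly the places where the half-form bundle is indispensable, and proposition~\ref{prop:prel-comp} itself uses the Kähler hypothesis through the symmetries of the curvature tensor of $T^{1,0}M$; without the correction, the analogue of $\theta^{\delta_u}$ would not be $\bar\partial$-exact and no connection of the form (\ref{eq:connection}) would exist.
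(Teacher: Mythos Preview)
Your proof is correct and follows essentially the same route as the paper's argument in section~\ref{sec:existence}: apply proposition~\ref{prop:prel-comp} to $F=L^k\otimes\delta$, use the half-form identity $2R^\delta+R^{\operatorname{det}}=0$ together with (\ref{eq:def_G}) to collapse the first-order term to $2ik\,\nabla_{\mu(X)(\bar Z)}$, invoke proposition~\ref{prop:courbure_demi_forme} for $k\theta^{L_u}=4kR^\delta(X,\bar\cdot)$, and then specialise to $k=0$ to show $\theta^{\delta_u}$ is $\bar\partial$-closed and hence exact by $H^{0,1}(M_u)=0$. You are in fact slightly more explicit than the paper on two points the paper leaves to the reader: the cancellation $2R^\delta+R^{\operatorname{det}}=0$ coming from $\delta^2\simeq K$, and the reason $\bar\partial\theta^{\delta_u}=0$ (via $\bar\partial^2=0$ on $\difh_2(\delta)$). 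Your closing remark that ``no connection of the form (\ref{eq:connection}) would exist'' without the metaplectic correction overstates the case---Hitchin's original construction in \cite{Hi} does produce such a connection on $H^0(M_u,L_u^k)$---but this is commentary, not part of the proof.
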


\subsection{Application to the moduli space}\label{sec:appl-moduli-space}

We can apply the previous constructions to the moduli space $\mo$ and
the Chern-Simons bundle, cf. chapter
\ref{sec:moduli-space-chern-simons}, with the Teichm{\"u}ller space as a
parameter space. 
All the assumptions we made are satisfied. We obtain a connection
explicitly given by (\ref{eq:def_connection_Hitchin}). We assert that this connection is projectively
flat. This can be deduced from the results of \cite{Hi} and our
discussion about the unicity.

Indeed, in \cite{Hi}, Hitchin considered the quantum space $\Hilb'_k$ over $\Teich$ whose
fiber at $\si$ is $H^0_\si ( \mo_\si, L_\si^k)$. He proved the existence
of a projectively flat connection of the form 
$$  \nabla_X^{L^k} + P_k'(X), \qquad X \in \Ga(\Teich, T \Teich)     $$ 
where for any $\si$, $P'_k(X)_\si$ is a second order differential
operator. Furthermore the symbol of $P'_k(X)_\si$, viewed as a section of $S^2(E_\si)$, is
$G/ ( 4(k + 1)) $. Because of the isomorphism $\delta \simeq L^{-1}$,
this defines a connection on $\Hilb_{k+1}$. It follows from the
discussion of chapter \ref{sec:unicity} that this connection is the
same as (\ref{eq:def_connection_Hitchin}) up to a term of the form
$\al \id$ with $\al \in \Om^1 ( \Teich )$. So the curvatures of the
two connections differ by $d \al \id$.

\section{An algebra of Toeplitz operators} \label{sec:an-algebra-toeplitz}

Consider a complex compact manifold $M$ and a family of Hermitian holomorphic line bundles
$F=(F_j \rightarrow M)_{ j \in J}$. For any $j \in J$, let $\op ( F_j ) $ be the
algebra of  holomorphic differential operators acting on the sections
of $F_j$. Consider the subalgebra $\op _{sc} (F) $ of $\prod _{j \in J} \op
( F_j )$  consisting of the family $(P_j)$ satisfying
the following condition: 
there exists $\ell$ such that for any  complex
coordinate system $(U, z^1, \ldots , z^n)$, there exists a family $(a_\al)_{|\al | \leqslant
  \ell}$ of $\Ci (U )$ such that we have over $U$
$$ P_j = \sum_\al a_\al \bigl( \nabla_1^{F_i}\bigr) ^{\al (1) } \ldots \bigl( \nabla_n^{F_i}\bigr) ^{\al (n) }  
, \qquad \forall j \in J.$$ 
Here $\nabla^{F_j}$ is the Chern connection of $F_j$, and
$\nabla_{\ell}^{F_{j}}$ is the covariant derivative with respect to
$\partial_{ z^\ell}$. To check that $\op_{sc} (F)$ is a subalgebra,
it suffices to use that  $[\nabla_{k}^{F_{j}}, \nabla_{\ell}^{F_j} ]=
0$. 

Let $\mu$ be a measure of $M$. We define a scalar product on $\Ga ( M,
F_j)$ by integrating the pointwise scalar product of sections against
$\mu$. Denote by $\Pi_j$ the orthogonal projector of $\Ga (M , F_j)$
onto its subspace of holomorphic sections $H^0 ( M ,
F_j)$. 

\begin{prop}  \label{prop:algebra-toeplitz}
For any $(P_j) \in \op_{sc} (F)$, there exists a function $f \in \Ci
(M)$ such that for any $j \in J$
$$\Pi_j P_j \Pi_j = \Pi_j M_f \Pi_j $$
where $M_f$ is multiplication operator of $\Ga (M, F_j)$ with
multiplicator $f$.  
\end{prop}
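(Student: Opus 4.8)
\emph{Proof proposal.} The plan is to identify $\Pi_j P_j\Pi_j$ with a multiplication operator by computing the formal adjoint of $P_j$. Let $\langle\cdot,\cdot\rangle$ be the scalar product on $\Ga(M,F_j)$ defined by $\mu$ and let $P_j^*$ be the corresponding formal adjoint. Since $\Pi_j$ is the orthogonal projector onto $H^0(M,F_j)$, the identity $\Pi_j P_j\Pi_j=\Pi_j M_f\Pi_j$ is equivalent to $\langle P_js,t\rangle=\langle fs,t\rangle$ for all $s,t\in H^0(M,F_j)$, that is to $\langle s,(P_j^*-M_{\bar f})t\rangle=0$ for all holomorphic $s$ and $t$. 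This certainly holds once
$$ P_j^*-M_{\bar f}\in\I , $$
the left ideal generated by the anti-holomorphic derivations of $\Ga(M,F_j)$, because every element of $\I$ annihilates holomorphic sections. So the point is to show that the formal adjoint of a holomorphic differential operator is, modulo $\I$, the multiplication by a function, and that for $(P_j)\in\op_{sc}(F)$ this function can be chosen independently of $j$.

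First I would compute the adjoint of a single covariant derivative. In a complex chart $(U,z^1,\dots,z^n)$ write $\mu=m\,dx$ with $dx$ the Lebesgue measure; $M$ being compact without boundary, integration by parts together with the Chern connection identity $\partial_{z^i}(s,t)_{F_j}=(\nabla^{F_j}_i s,t)_{F_j}+(s,\nabla^{F_j}_{\bar i}t)_{F_j}$ gives over $U$
$$ \bigl(\nabla^{F_j}_i\bigr)^*=-\nabla^{F_j}_{\bar i}-\overline{g_i},\qquad g_i:=\partial_{z^i}\log m . $$
The crucial point is that $g_i$ depends neither on $j$ nor on the line bundle. Now take a monomial $a_\al(\nabla^{F_j}_1)^{\al(1)}\cdots(\nabla^{F_j}_n)^{\al(n)}$ as in the definition of $\op_{sc}(F)$, with $a_\al$ independent of $j$, and compute its adjoint applied to a holomorphic section $t$ by iterating the previous formula: at each step the anti-holomorphic derivative $\nabla^{F_j}_{\bar i}$ falls directly on $t$, or on $t$ times an accumulated scalar factor, so $\nabla^{F_j}_{\bar i}t=0$ may be used and no commutator $[\nabla^{F_j}_i,\nabla^{F_j}_{\bar k}]$, i.e. no curvature, is ever produced. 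One obtains
$$ \Bigl(a_\al(\nabla^{F_j}_1)^{\al(1)}\cdots(\nabla^{F_j}_n)^{\al(n)}\Bigr)^* t=\overline{b_\al}\,t , $$
where $b_\al$ is obtained from $a_\al$ by applying $|\al|$ of the first order operators $h\mapsto-(\partial_{z^i}h+g_i h)$; in particular $b_\al$ is built only out of $a_\al$ and the $g_i$, hence does not depend on $j$. Summing over $\al$ gives $P_j^* t=\overline{f_U}\,t$ for every holomorphic $t$, with $f_U:=\sum_\al b_\al\in\Ci(U)$ independent of $j$.

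It remains to globalize. The operator $P_j^*-M_{\overline{f_U}}$ annihilates every locally holomorphic section over $U$; since a nonzero holomorphic differential operator cannot kill all local holomorphic sections, the holomorphic part of $P_j^*-M_{\overline{f_U}}$ vanishes, so $P_j^*-M_{\overline{f_U}}\in\I$ over $U$. If $U'$ is a second chart, the same computation yields $f_{U'}$, and $M_{\overline{f_U}-\overline{f_{U'}}}\in\I$ over $U\cap U'$; but a multiplication operator lying in $\I$ must vanish, as one sees by applying it to a local holomorphic section, which is killed by the generators of $\I$. Hence the $f_U$ glue to a global $f\in\Ci(M)$, independent of $j$, with $P_j^*-M_{\bar f}\in\I$ for every $j$, and the first paragraph concludes.

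The step requiring care is the middle paragraph: one must organize the iterated integration by parts so that it is transparent that the intermediate objects stay of the form (holomorphic section) $\times$ (smooth function) — which is exactly what keeps the recursion closed — and that the resulting scalar coefficient involves only the $j$-independent data $a_\al$ and $g_i$, never the curvature of $F_j$. The reduction via the projectors, the divergence identity for $\partial_{z^i}$ against a general smooth measure, and the gluing are all routine.
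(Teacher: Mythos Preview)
Your proof is correct and rests on the same integration-by-parts identity as the paper's (what the paper attributes to Tuynman): for a $(1,0)$ vector field $X$ and holomorphic $s_2$, one has $(\nabla^{F_j}_X s_1, s_2)_{F_j} = ((-\operatorname{div}_\mu X)\, s_1, s_2)_{F_j}$, with the divergence independent of $j$. The organization differs somewhat. The paper stays global throughout: using a partition of unity it writes the principal symbol as $\sum_i X^i_1\otimes\cdots\otimes X^i_\ell$ with global smooth $(1,0)$ vector fields, reduces by induction on the order to products $\nabla^{F_j}_{X_1}\cdots\nabla^{F_j}_{X_\ell}$, and then iterates the divergence identity by absorbing the scalar produced at each step into the next vector field, $f_1\nabla_{X_2}=\nabla_{f_1 X_2}$. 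Your version works in a coordinate chart, phrases the same identity as the formal-adjoint formula $(\nabla^{F_j}_i)^*=-\nabla^{F_j}_{\bar i}-\overline{g_i}$ with $g_i=\partial_{z^i}\log m$, iterates on a holomorphic section, and then glues the local multipliers $f_U$. Both bookkeepings make it transparent that the curvature of $F_j$ never appears, which is precisely what yields the $j$-independence of $f$; the paper's global packaging avoids the gluing step, while your adjoint language makes the reduction $\Pi_j P_j\Pi_j=\Pi_j M_f\Pi_j \Leftrightarrow P_j^*-M_{\bar f}\in\I$ explicit.
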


The proof is based on a trick due to Tuynman \cite{Tuy}, a similar result was also used in \cite{An}. 

\begin{proof} 
Let $(P_j) \in \op (F) $. Observe that all the $P_j$ have the same
order $\ell$ and the same symbol $\si$. This symbol is a section of
$S^\ell ( T^{1,0} M)$. Using a partition of unity, we can write $\si$
under the form
$$ \si = \sum_{i =1, \ldots , r } X^i_1 \otimes \ldots \otimes
X^i_\ell $$
where the $X^i _k$ are smooth sections of $ T^{1,0} M$. Since 
$$ P_j - \sum_{i = 1, \ldots ,r} \nabla^{F_j}_{X^i_1} \ldots \nabla^{F_j}_{X^i_\ell}  $$
has order $\ell -1$, it suffices to prove the proposition for each
operator $$\nabla^{F_j}_{X^i_1} \ldots \nabla^{F_j}_{X^i_\ell} .$$
Let $s_1, s_2$ be smooth sections of $F_j$. Assume that $s_2$ is
holomorphic. Then for any smooth section $X$ of $T^{1,0} M$, we have
$$ X. [ (s_1, s_2) \mu ] = ( \nabla^{F_i}_X s_1, s_2) \mu +  (s_1,
s_2) ( \operatorname{div} X) \mu$$   
So if we denote by $(\cdot , \cdot )_{F_j}$ the scalar product of
sections we obtain
$$  ( \nabla^{F_j}_X s_1 , s_2 )_{F_j} = ( f s_1 , s_2 ) _{F_j} 
$$
with $f = - \operatorname{div} X$. So if $X_1, \ldots , X_\ell$ are
smooth sections of $T^{1,0} M$, then 
\begin{xalignat*}{2} 
 ( \nabla^{F_j}_{X_1} \ldots \nabla^{F_j}_{X_\ell} s_1 , s_2) _{F_j}
= &  ( f_1  \nabla^{F_j}_{X_2}\ldots \nabla^{F_j}_{X_\ell} s_1 , s_2)
_{F_j} \\ 
= &  (   \nabla^{F_j}_{f_1 X_2} \ldots \nabla^{F_j}_{X_\ell} s_1 , s_2)
_{F_j} \\ 
 & \ldots \\
= & ( f s_1 , s_2)_{F_j}
\end{xalignat*} 
where $f$ is a function which depends only on $\mu$ and the vector
fields $X_i$. This proves the result. 
\end{proof}

\section{Asymptotic flatness}\label{sec:asymptotic-flatness}

In this part we consider the same data as in section
\ref{sec:settting}. 
We will prove that the curvature of the connection $\nabla^{\Hilb_k}$
defined in theorem \ref{theo:Hitchin} vanishes in the semi-classical
limit $k \rightarrow \infty$.  For any vector field $X$ of $U$, denote by $P_k(X)$ the operator
$$ P_k(X) = \tfrac{1}{4} ( \Delta^{G(X)} - H(X)) $$
The curvature of (\ref{eq:def_connection_Hitchin}) in the directions
$X, Y \in \Ga (U,TU)$ is  
\begin{xalignat*}{2} 
   R_k(X,Y) = &  \bigl[ \nabla_{X}^{L^k \otimes \delta} +
k^{-1} P_k(X),\nabla_{Y}^{L^k \otimes \delta} + k^{-1} P_k(Y) \bigr] \\ &  -
\nabla_{[X,Y]}^{L^k \otimes \delta} - k^{-1} P_k([X,Y]).
\end{xalignat*}
Recall that the algebra of differential operators acting on the
sections of a holomorphic fiber bundle is the direct sum of the
algebra of holomorphic differential operators and the left ideal
generated by the anti-holomorphic vector fields. 
If $Q_u$ is a differential operator acting on $\Ga ( M_u, L_u
^k \otimes \delta_u)$, we denote by $Q_u^{\hol}$ its holomorphic
part. We use the same notations for families $(Q_u)_{ u \in U}$.  
We denote by $\op _{sc}$ the space of families 
$$( Q_{u,k}: \Ga ( M_u, L_u
^k \otimes \delta_u) \rightarrow \Ga ( M_u, L_u
^k \otimes \delta_u) )_{u \in U, k \geqslant  k_0}$$ consisting of differential operators such that for any $k$, $Q_{u,k}$ depends smoothly on $u$ and for any $u$, $(Q_{u,k}) _{k}$ belongs to the
algebra $\op_{sc}
(L_u^k \otimes \delta_u, k \geqslant k_0)$ introduced in the previous
section. 

\begin{theo} \label{theo:curvature_Hitchin}
For any vector fields $X,Y$ of $U$, one has
$$  R_k(X,Y) ^{\hol} = k^{-1} P_{1,k}(X,Y) + k^{-2} P_{2,k} (X,Y),
\qquad 
\forall k \geqslant k_0 $$
where the families $(P_{1,k}(X,Y))_k $ and $(P_{2,k}(X,Y))_k $ belong
to $\op_{sc}$.
\end{theo}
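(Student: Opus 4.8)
The plan is to compute $R_k(X,Y)^{\hol}$ by expanding the curvature in powers of $k^{-1}$ and identifying each piece as a holomorphic differential operator lying in $\op_{sc}$. Write $\nabla_X = \nabla_X^{L^k\otimes\delta}$ for brevity. Expanding the bracket, $R_k(X,Y)$ is the sum of three groups of terms according to the power of $k^{-1}$: the $k^0$ term $R^{\nabla}(X,Y) := [\nabla_X,\nabla_Y]-\nabla_{[X,Y]}$, which is the curvature of the connection on $L^k\otimes\delta$ in the directions tangent to $U$ composed with... actually it is a $0$-order operator (multiplication by the curvature $2$-form evaluated on $X,Y$); the $k^{-1}$ term $[\nabla_X,P_k(Y)]-[\nabla_Y,P_k(X)]-P_k([X,Y])$; and the $k^{-2}$ term $[P_k(X),P_k(Y)]$. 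I would then take holomorphic parts. The commutator of two holomorphic differential operators is again holomorphic, so $[P_k(X),P_k(Y)]^{\hol}=[P_k(X),P_k(Y)]$ modulo $\I$ — but one must be careful: $P_k(X)$ involves $\Delta^{G(X)}$ which is holomorphic by the holomorphicity assumption on $G(X)$, and $H(X)$ which is a multiplication operator, hence holomorphic. So the $k^{-2}$ term is already a holomorphic differential operator, and I would argue it lies in $\op_{sc}$ because $\Delta^{G}$ is, in local holomorphic coordinates, a second-order operator of the form $\sum a_\alpha (\nabla^{L^k\otimes\delta})^\alpha$ with coefficients independent of $k$ (the $k$-dependence of the connection on $L^k$ only shifts which power of $L$ the covariant derivative acts on, not the coefficients), so the commutator of two such operators is a third-order operator of the same type.

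For the $k^{-1}$ and $k^0$ terms the point is that their holomorphic parts \emph{vanish}, so that $R_k(X,Y)^{\hol}$ is genuinely $O(k^{-2})$ plus the $k^{-1}$ piece coming from... no: I expect the $k^{-1}$ term to survive and the $k^0$ term to cancel. More precisely: the defining property of the Hitchin connection (proposition \ref{prop:conn_bien_def}, which $(\nabla^{L^k\otimes\delta}, k^{-1}P_k)$ satisfies by theorem \ref{theo:Hitchin}) says exactly that $[\nabla_{\bar Z},\nabla_X+k^{-1}P_k(X)]$ lies in $\I$ for every section $Z$ of $E$. This is precisely the statement that $\nabla_X+k^{-1}P_k(X)$ preserves holomorphicity, i.e. that it descends to $\Hilb_k$. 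Then $R_k(X,Y)$ is a well-defined endomorphism of the fibers $\Hilb_{k,u}$ composed into $\Ga(M_u,L^k_u\otimes\delta_u)$, but as an \emph{operator} on sections it need not be $0$; however the curvature of a connection on a bundle of holomorphic sections, when the connection preserves holomorphicity, acts on holomorphic sections, and by the direct-sum decomposition its holomorphic part is the part that genuinely acts. So $R_k(X,Y)^{\hol}$ is the true curvature endomorphism. I would then simply \emph{define} $P_{1,k}(X,Y)$ and $P_{2,k}(X,Y)$ to be $k$ times the $k^{-1}$-homogeneous part and $k^2$ times the $k^{-2}$-homogeneous part of $R_k(X,Y)^{\hol}$, namely
$$ P_{1,k}(X,Y) = \bigl([\nabla_X,P_k(Y)]-[\nabla_Y,P_k(X)]-P_k([X,Y])\bigr)^{\hol}, \qquad P_{2,k}(X,Y)=[P_k(X),P_k(Y)], $$
and verify these lie in $\op_{sc}$.

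The verification that $P_{1,k}(X,Y)\in\op_{sc}$ is where the real work is. I would expand $[\nabla_X,\Delta^{G(Y)}]$: writing $\Delta^{G(Y)}$ in a local holomorphic frame as $\sum_{ij}\nabla_i^{T^{1,0}M\otimes F}\circ(G_{ij}\,\nabla_j^F)$ where $\nabla_X$ differentiates the coefficients $G_{ij}$ (which vary with $u\in U$) and also contributes the $U$-tangent curvature $R^\nabla(X,\cdot)$ when commuted past the $M$-covariant derivatives; likewise $[\nabla_X,H(Y)]$ produces $(\nabla_X H(Y))$, a new multiplication operator. All of this stays a second-order holomorphic differential operator whose coefficients are smooth in $u$ and \emph{independent of $k$} (again because $\nabla^{L^k}$ contributes only a factor of $k$ in curvature terms, but here we are looking at $U$-tangent directions where the curvature of $L$ vanishes — $L$ is pulled back from $M$). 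Hence $P_{1,k}(X,Y)\in\op_{sc}$. The main obstacle I anticipate is bookkeeping the holomorphic-part projections consistently: one must check that after taking $\hol$-parts the leftover terms are still expressible in the standard local form $\sum a_\alpha(\nabla^{F_j})^\alpha$ with $k$-independent $a_\alpha$, which requires that the error terms dropped (elements of $\I$, i.e. operators with an anti-holomorphic factor on the right) do not secretly contribute to the holomorphic part — this is guaranteed by the direct-sum decomposition being a genuine decomposition of the operator algebra, but needs to be invoked carefully at each step. A secondary subtlety is that $\op_{sc}$ as defined requires a \emph{single} order $\ell$ and coefficient family working for all $j$ simultaneously; since all our operators are built from $\Delta^G$, $H$, and $\nabla$ with $u$-smooth, $k$-independent local data, this uniformity holds automatically.
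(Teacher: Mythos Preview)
There is a genuine gap in your argument: the cancellation of the $k^0$ term, which you say you ``expect'' but never verify, is the whole point of the theorem, and your claim that $[\nabla_X,P_k(Y)]^{\hol}$ has $k$-independent local coefficients is false. Your reasoning that ``we are looking at $U$-tangent directions where the curvature of $L$ vanishes'' only controls the \emph{first} commutation $[\nabla_X,\nabla_i^{L^k\otimes\delta}]$. But $\Delta^{G(Y)}$ is second order: after the first commutation you are left with terms containing $\nabla_{[X,\partial_i]}$, and by equation (\ref{eq:der_struct_comp}) the bracket $[X,\partial_i]$ has a nonzero $\bar E$-component $-\tfrac{i}{2}\bar\mu(X)(\partial_i)$ precisely because the complex structure varies with $u$. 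Commuting that $\bar E$-component past the remaining $\nabla_j^{L^k\otimes\delta}$ produces the curvature $R^{L^k\otimes\delta}$ in purely $M$-tangent directions, which \emph{is} proportional to $k$. The paper carries this out (proposition \ref{prop:res_int}) and finds
\[
[\nabla_X^{L^k\otimes\delta},P_k(Y)]^{\hol}=\tfrac{k}{8}\operatorname{tr}(\mu(Y)\bar\mu(X))+\text{(element of }\op_{sc}\text{)}.
\]
So $k^{-1}[\nabla_X,P_k(Y)]^{\hol}$ contributes a $k^0$ term $\tfrac{1}{8}\operatorname{tr}(\mu(Y)\bar\mu(X))$, and antisymmetrizing in $X,Y$ gives $-\tfrac{1}{8}\operatorname{tr}(\mu(X)\bar\mu(Y)-\mu(Y)\bar\mu(X))$.

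This must then be matched against the genuine $k^0$ term $R^{L^k\otimes\delta}(X,Y)=R^\delta(X,Y)$ (the $L^k$ part contributes nothing here, as you noted). The paper computes (proposition \ref{prop:courbure_demi_formes_2}) that $R^\delta(X,Y)=\tfrac{1}{8}\operatorname{tr}(\mu(X)\bar\mu(Y)-\mu(Y)\bar\mu(X))$, and the two contributions cancel exactly. This is the ``miraculous cancellation'' the paper refers to, and it depends on the half-form bundle: with a different auxiliary bundle in place of $\delta$ the curvature $R^\delta$ would not match the $\mu\bar\mu$ trace and a nonzero $k^0$ term would survive. Your proposal, by asserting that $P_{1,k}(X,Y)\in\op_{sc}$ with $k$-independent coefficients and silently dropping $R^\delta(X,Y)$, bypasses exactly the computation that makes the theorem true.
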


Theorem \ref{theo:curvature} about the asymptotic flatness
follows. Indeed by proposition \ref{prop:algebra-toeplitz}, for any
family $(Q_{u,k})$ of $\op_{sc}$, there exists a continuous function
$C : U \rightarrow \R$ such that for any $k$ and $u$, the uniform norm of
$$\Pi_{u,k} Q_{u,k}: H^0 ( M_u , L_u ^k \otimes \delta_u ) \rightarrow  H^0 ( M_u , L_u ^k \otimes \delta_u ) $$
 is bounded by $C(u)$. 

The remainder of this section is devoted to the proof of theorem \ref{theo:curvature_Hitchin}. 
Since $L$ is the pull-back of a bundle over $M$, its curvature in the
directions tangent to $U$ vanishes. For the half-form bundle, the
curvature $R^{\delta}$ depends on the derivative of the complex
structure. Recall that we denote by $\mu (X)$ the variation of the
complex structure, cf. section \ref{sec:vari-compl-struct}.

\begin{prop}  \label{prop:courbure_demi_formes_2}
For any vector field $X,Y$ of $U$, 
$$ R^{\delta}  (X,Y) = \frac{1}{8} \operatorname{tr} ( \mu (X)
\overline{\mu}(Y) - \mu (Y) \overline{\mu} (X) )  
.$$ 
\end{prop}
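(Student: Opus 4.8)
The statement is a purely pointwise, fiberwise computation of the curvature of the connection $\nabla^\delta$ in the two directions tangent to the parameter space $U$, so I would work at a fixed point $(u,x)$ and fix a local $\om$-orthonormal frame $(\partial_i)$ of $E_u$ at $x$. The plan is to start from the definition $\nabla^E = \pi\circ\nabla^{E\oplus\bar E}$, with $\pi=\tfrac12(\id-ij)$, and compute $R^E(X,Y)$ for $X,Y$ vector fields of $U$. Since $\nabla^{E\oplus\bar E}$ restricted to $U\times\{x\}$ is the trivial connection on the fixed vector space $T_xM\otimes\C$, its curvature in the $U$-directions vanishes; the whole curvature of $\nabla^E$ therefore comes from the variation of the projector $\pi$, i.e. from the derivatives $X.j=\mu(X)+\bar\mu(X)$, $Y.j=\mu(Y)+\bar\mu(Y)$. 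Concretely, $R^E(X,Y)=\pi\,[(X.\pi),(Y.\pi)]\,\pi$ restricted to $E$, which after substituting $X.\pi=-\tfrac{i}{2}(X.j)$ and using $\pi(X.j)=\mu(X)$ (as a map $\bar E\to E$) and $(X.j)\pi=\bar\mu(X)$ (as a map $E\to\bar E$) reduces, on $E$, to $-\tfrac14\big(\mu(X)\bar\mu(Y)-\mu(Y)\bar\mu(X)\big):E\to E$.

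Next I would pass from $E$ to $\wedge^{\operatorname{top}}E^\ast$ and then to $\delta$. The curvature of the induced connection on the top exterior power of the dual is minus the trace of $R^E(X,Y)$ acting on $E$, and $\nabla^\delta$ being a square root, its curvature is one half of that. This yields
$$ R^\delta(X,Y) = -\tfrac12\,\operatorname{tr}_E\!\big(R^E(X,Y)\big) = \tfrac18\,\operatorname{tr}\big(\mu(X)\bar\mu(Y)-\mu(Y)\bar\mu(X)\big),$$
where the trace on the right is the trace of the endomorphism $\mu(X)\bar\mu(Y)-\mu(Y)\bar\mu(X)$ of $E$ (equivalently, of $\bar\mu(Y)\mu(X)-\bar\mu(X)\mu(Y)$ on $\bar E$, which is the same number). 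This is exactly the claimed formula, with the sign absorbed by the passage to the dual bundle.

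\textbf{Main obstacle.}
The delicate point is bookkeeping of signs and of which bundle each trace is taken on: $\mu(X)\in\Hom(\bar E,E)$ while $\bar\mu(X)\in\Hom(E,\bar E)$, so $\mu(X)\bar\mu(Y)$ is an endomorphism of $E$ and one must be consistent about whether one traces over $E$ or $\bar E$ and about the factor coming from $\pi=\tfrac12(\id-ij)$ versus the projection $\tfrac12(\id+ij)$ onto $\bar E$ used in (\ref{eq:der_struct_comp}). I would pin these down by the same device used just before (\ref{eq:der_struct_comp}): differentiate the identity $j^2=-\id$ and the relations $\pi^2=\pi$, $j\pi=i\pi$ to get $(X.\pi)=-\tfrac i2(X.j)$ and $\pi(X.\pi)(1-\pi)=0$ type identities, and then track each term through $\wedge^{\operatorname{top}}E^\ast\to\delta^2\to\delta$, comparing with Proposition \ref{prop:courbure_demi_forme} as a consistency check on the overall normalization. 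Once the sign conventions are fixed the computation is a short linear-algebra manipulation with no analytic content.
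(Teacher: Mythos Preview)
Your proposal is correct and is precisely the computation one expects; the paper itself gives no proof here, only the remark that it is easy together with a reference to theorem 7.2 of \cite{oim_mc}, and your argument is the natural one behind that reference. Your identification $R^E(X,Y)=\pi[(X.\pi),(Y.\pi)]\pi$ from the flatness of $\nabla^{E\oplus\bar E}$ in the $U$-directions, followed by the passage to $\wedge^{\top}E^*$ and then to $\delta$, is exactly right, and the sign bookkeeping you flag as the main obstacle is handled correctly.
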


The proof is easy, cf. as instance the proof of theorem 7.2 in \cite{oim_mc}.

\begin{prop} \label{prop:res_int}
For any vector fields $X,Y$ of $U$ we have
$$ \bigl[ \nabla_X^{L^k \otimes \delta} , P_k (Y) \bigr]^{\hol} =
\frac{k}{8} \operatorname{tr} ( \mu (Y) \overline{\mu} (X) ) + P_k
(X,Y)$$
where the family $(P_k(X,Y))$ belongs to $\op_{sc}$.  
\end{prop}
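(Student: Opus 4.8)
The plan is to compute $\bigl[\nabla_X^{L^k\otimes\delta},P_k(Y)\bigr]$ and isolate its holomorphic part. Recall $P_k(Y)=\tfrac14(\Delta^{G(Y)}-H(Y))$, where $\Delta^{G(Y)}$ is the second-order holomorphic operator of \eqref{eq:def_delta} associated to the (holomorphic) symbol $G(Y)\in\Ga(U\times M,S^2 E)$, and $H(Y)$ is the scalar function of \eqref{eq:def_H}. The term $H(Y)$ contributes $\tfrac14\bigl(X.H(Y)\bigr)$, which is a zero-order operator depending smoothly on $u$, hence lies in $\op_{sc}$ and can be absorbed into $P_k(X,Y)$. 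So the real content is the commutator $\bigl[\nabla_X^{L^k\otimes\delta},\Delta^{G(Y)}\bigr]$, and we only care about it modulo the ideal $\I$ generated by anti-holomorphic derivations (to extract the holomorphic part) and modulo $\op_{sc}$-terms that can be thrown into $P_k(X,Y)$.

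First I would write $\Delta^{G(Y)}$ in the schematic form $\sum_{i,j}G_{ij}\nabla_i^{L^k\otimes\delta}\nabla_j^{L^k\otimes\delta}+(\text{lower order, curvature of }E)$ using the definition \eqref{eq:def_delta}. Taking the commutator with $\nabla_X^{L^k\otimes\delta}$ produces three kinds of terms. (a) Terms where $\nabla_X$ hits the coefficients $G_{ij}$ (and the auxiliary connection coefficients of $E$): these are still holomorphic differential operators of order $\le 2$ with smooth coefficients, so they land in $\op_{sc}$. (b) Terms where $\nabla_X$ must be commuted past a covariant derivative $\nabla_i^{L^k\otimes\delta}$ in the $E$-direction: this generates curvature terms. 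The curvature of $L^k$ in a mixed $U$–$M$ direction vanishes since $L$ is pulled back from $M$; the curvature of $\delta$ in a mixed direction is governed by Proposition \ref{prop:courbure_demi_forme} / \ref{prop:courbure_demi_formes_2}. (c) Terms where the commutator $[X,\partial_i]$ of vector fields appears; since $\mu(X)$ measures the variation of the complex structure (cf. \eqref{eq:der_struct_comp}), $[X,\partial_i]$ has an $\bar E$-component proportional to $\mu(X)(\overline{\cdot})$. Multiplying by $\nabla_i$ on the left and contracting with $G(Y)$, and then using \eqref{eq:def_G} to rewrite $G(Y)$ in terms of $\mu(Y)$, this produces precisely a term $\propto \operatorname{tr}\bigl(\mu(Y)\overline\mu(X)\bigr)$ times $k$ after accounting for the $L^k$-connection.

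The bookkeeping that pins the constant is the step I expect to be the main obstacle: one must carefully track the factor of $k$ (it comes only from $\nabla^{L^k}=k\nabla^L$-type terms, since $\nabla^\delta$ is $k$-independent) and the combinatorial $\tfrac18$, which arises from the $\tfrac14$ in front of $\Delta^{G(Y)}$ together with the $\tfrac12$'s in the projection $\pi=\tfrac12(\id-ij)$ and in \eqref{eq:der_struct_comp}. Concretely, the mixed-direction curvature identity $[\nabla_X,\nabla_{\partial_i}]=R^{L^k\otimes\delta}(X,\partial_i)+\nabla_{[X,\partial_i]}$, together with the fact that $R^{\delta}(X,\partial_i)$ contributes the order-zero piece and $\nabla_{[X,\partial_i]}$ (whose $\bar E$-part is $\tfrac i2\mu(X)(\overline{\partial_i})$ modulo $\bar E$) contributes through $G(Y)\lrcorner(\cdot)$, is what I would expand in a holomorphic frame $\partial_1,\dots,\partial_n$ with dual $\ell_1,\dots,\ell_n$. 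Contracting $\sum_{i,j}\ell_k\nabla_k(\cdots G_{ij}\partial_i\otimes\nabla_j(\cdots))$ and using \eqref{eq:def_G} to substitute $\sum_{i}G_{ij}\om(\partial_i,\cdot)=\mu(Y)_j$ converts the double contraction of $G(Y)$ against a curvature two-form and $\mu(X)$ into the trace $\operatorname{tr}(\mu(Y)\overline\mu(X))$; all remaining pieces are manifestly holomorphic differential operators of bounded order with smooth coefficients, hence in $\op_{sc}$. Everything in the ideal $\I$ is discarded when we pass to the holomorphic part $[\,\cdot\,]^{\hol}$, which by \eqref{eq:dbar_op_dif} amounts to working modulo $\I$ throughout; this is legitimate since $X.\bar\partial P_k(Y)$ only affects the $\I$-part. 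This yields exactly
$$\bigl[\nabla_X^{L^k\otimes\delta},P_k(Y)\bigr]^{\hol}=\frac{k}{8}\operatorname{tr}\bigl(\mu(Y)\overline\mu(X)\bigr)+P_k(X,Y),\qquad (P_k(X,Y))\in\op_{sc},$$
which is the claim.
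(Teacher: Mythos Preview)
Your approach is the same as the paper's: expand $\Delta^{G(Y)}$ in a local holomorphic frame as $f_j\nabla_j^k+G_{ij}\nabla_i^k\nabla_j^k$, commute with $\nabla_X^{L^k\otimes\delta}$, and sort the resulting pieces into $\op_{sc}$ plus the one exceptional scalar term.

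There is, however, one point where your sketch is imprecise enough that it would likely derail the actual computation. You correctly note that $R^{L^k}(X,\partial_i)=0$ since $L$ is pulled back from $M$, and you seem to locate the $k$-dependent trace term in your category~(c), coming from $\nabla_{[X,\partial_i]}$ ``contributing through $G(Y)\lrcorner(\cdot)$''. But $\nabla_{[X,\partial_i]}$ by itself is a first-order operator with $k$-independent coefficients; no $k$ appears there. The trace term arises only after a \emph{second} commutation: one must move $\nabla_{[X,\partial_i]}^k$ to the right of $\nabla_j^k$,
\[
G_{ij}\,\nabla_{[X,\partial_i]}^k\nabla_j^k
= G_{ij}\,\nabla_j^k\nabla_{[X,\partial_i]}^k
+ G_{ij}\,\nabla_{[[X,\partial_i],\partial_j]}^k
+ G_{ij}\,R^k\bigl([X,\partial_i],\partial_j\bigr),
\]
so that the first summand lands in $\I$ (via the $\bar E$-part of $[X,\partial_i]$) or in $\op_{sc}$ (via the $E$-part). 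Now both arguments of $R^k$ lie in $E\oplus\bar E$, so the $L^k$-curvature \emph{does} contribute: $R^{L^k}([X,\partial_i],\partial_j)=\tfrac{k}{i}\,\omega([X,\partial_i],\partial_j)$. Since the $\bar E$-component of $[X,\partial_i]$ is $-\tfrac{i}{2}\,\overline{\mu}(X)(\partial_i)$ (it is $\overline{\mu}(X)$, not $\mu(X)$ as you wrote; $\mu(X)$ maps $\bar E\to E$), combining with \eqref{eq:def_G} and the symmetry of $G$ gives
\[
\tfrac{k}{i}\,G_{ij}\,\omega\bigl([X,\partial_i],\partial_j\bigr)=\tfrac{k}{2}\operatorname{tr}\bigl(\mu(Y)\overline{\mu}(X)\bigr).
\]
The $\tfrac14$ in front of $\Delta^{G(Y)}$ then produces the $\tfrac{k}{8}$. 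All remaining pieces, as you say, have holomorphic part in $\op_{sc}$.
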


\begin{proof} 
Introduce a local frame $\partial_1 , \ldots, \partial_n$ of the
relative holomorphic tangent bundle of $ M \times U$. Denote by
$\nabla^k$ the covariant derivative of $L^k \otimes \delta$ and by
$\nabla_i^k$ the covariant derivative in the direction of
$\partial_i$. In the sequel, repeated indices $i$ and $j$ are summed over. 
We have 
\begin{gather}\label{eq:delta}
 \Delta^{G(Y)}  = f_j \nabla_j^k + G_{i,j} \nabla_i ^k \nabla_j^k .
\end{gather}
where the coefficients  $f_i$ and $G_{ij}$ do not depend on $k$. Then
\begin{gather} \label{eq:br1} 
 \bigl[ \nabla_X^k , f_j \nabla^k _j \bigr] = (X. f_j) \nabla_j ^k
+ f_j  \nabla ^k _ {[X, \partial_j]} + f_j R^k (X, \partial_j) 
\end{gather}
where $R^k$ is the curvature of $\nabla^k$. 
The first term of the right hand side clearly belongs to
$\op_{sc}$, the third term also because $$R^k (X, \partial_j) =
R^{\delta} (X, \partial_j)$$ is independent of $k$.  For the second term, 
observe that the
holomorphic part of $\nabla^k_{[X, \partial_j]}$ is a linear combination
of the $\nabla^k_j$ with smooth coefficients which do not depend on
$k$. So the holomorphic part of (\ref{eq:br1}) belongs to $\op_{sc}$.
Let us compute the bracket of $\nabla_X^k$ with the second term of
(\ref{eq:delta}). 
$$ \bigl[ \nabla ^k_X , G_{ij}   \nabla_i ^k \nabla_j ^k \bigr] =
(X. G_{ij} )  \nabla_i ^k \nabla_j ^k + G_{ij} \bigl[ \nabla_X ^k ,
\nabla_i^k \bigr] \nabla_j ^k +  G_{ij}  \nabla_i ^k \bigr[
\nabla_X^k , \nabla_j ^k \bigr] $$
The first term of the right hand side belongs to $\op_{sc}$. The same
holds for the
holomorphic part of the third term because
$$ G_{ij}  \nabla_i ^k \bigr[
\nabla_X^k , \nabla_j ^k \bigr] = G_{ij} \nabla_i^k \nabla^k _{[X,
  \partial_j] } + G_{ij} \nabla_i ^k R^k (X, \partial_j).$$
and we can argue as we did for (\ref{eq:br1}).  
The second term is equal to 
\begin{xalignat*}{2} 
 G_{ij} \bigl[ \nabla_X ^k ,
\nabla_i^k \bigr] \nabla_j ^k =  & G_{ij} \nabla^k _{[X,\partial_i] }
\nabla_j ^k  + G_{ij} R^k ( X, \partial_i) \nabla_j ^k \\
= & G_{ij}\nabla_j ^k \nabla^k _{[X,\partial_i] } + G_{ij} \nabla  ^k
_{ [ [ X, \partial_i ] ,\partial_j ]} + G_{ij} R^k ( [ X, \partial_i ] ,
\partial_j )\\ &  + G_{ij} R^k ( X, \partial_i ) \nabla_j ^k 
\end{xalignat*}
All the terms of this last sum have a holomorphic part in $\op_{sc}$
except the third one which is equal to 
$$ G_{ij} R^k ( [ X, \partial_i ] ,
\partial_j ) = G_{ij} R^\delta ( [ X, \partial_i ] ,
\partial_j ) +  \frac{k}{i}G_{ij} \om ( [ X, \partial_i ] ,
\partial_j )
$$
Since $ \mu (Y) = G_{ij} \om ( \partial_i, \cdot ) \partial_j$, we
have
$$ \operatorname{tr} ( \mu (Y) \overline{\mu }(X) ) = G_{ij} \om (
\partial_i, \overline{ \mu} (X) ( \partial_j) ) $$ 
Using that $ [X, \partial_i ] = -  \frac{i}{2} \overline{\mu} (X) (
\partial_i) $ modulo $E$, it follows that
$$ \frac{k}{i} G_{ij}   \om ( [ X, \partial_i ] ,
\partial_j ) = \frac{k}{2} \operatorname{tr} ( \mu (Y) \overline{\mu }(X) )
$$ 
Collecting the various terms, we obtain the result. 
\end{proof}

Let us conclude the proof of theorem \ref{theo:curvature_Hitchin}. We
have
\begin{xalignat*}{2}
 R_k (X, Y) =  & R^\delta (X,Y) + \frac{1}{k} \bigl[ \nabla_X^{ L^k
  \otimes \delta}, P_k (Y) \bigr] -  \frac{1}{k} \bigl[ \nabla_Y^{ L^k
  \otimes \delta} , P_k (X) \bigr]  \\ & + \frac{1}{k^2} \bigl[ P_{k} (X) ,
P_k (Y) \bigr]  - \frac{1}{k} P_k ([X,Y])
\end{xalignat*}
By propositions \ref{prop:courbure_demi_formes_2} and \ref{prop:res_int},
the holomorphic part of the sum of the first three terms is in $k^{-1} \op_{sc}$. The last two
terms  belong respectively to $k^{-2} \op_{sc} $ and
$k^{-1} \op_{sc}$. 

\section{Semi-classical connection} \label{sec:semi-class-conn}
 
Let $(M, \om)$ be a compact symplectic manifold with a prequantum
bundle $L_M \rightarrow M$. Consider a manifold
$U$ and a smooth family $(j_u, \delta_u , \varphi_u)_{u \in U}$ consisting of positive complex structures with half-form bundles. 

We adopt the same notations and conventions as in sections \ref{sec:vari-compl-struct}
and \ref{sec:settting}. Namely, $M_u = \{ u \} \times M$
is endowed with the complex structure $j_u$, $L_u \rightarrow M_u$ is
the prequantum bundle with the holomorphic structure induced by $j_u$.  
We denote by $L$, $\delta$ and $E$ the bundles over $U \times M$
whose restrictions to each  $M_u$ are $L_u$, $\delta_u$ and 
$T^{1,0} M_u$. Let $\Hilb_k$ be the vector bundle over $U$ whose fibers are the Hilbert spaces
$$ \Hilb_{k, u } = H ^{0} (M_u , L_u ^k \otimes \delta_u) $$
and denote by $\Pi_{k,u}$ the orthogonal projector
 from $\Ga ( M_u ,  L_u^k \otimes \delta_u )$ onto $\Hilb_{k, u}$.

We now define a connection of the bundle $\Hilb_k$. Consider the
same connections on $\delta $ and $L$ as in section
\ref{sec:settting}. We set for any vector field $X$ of $U$ and section $s$ of $\Hilb_k$
$$ (\nabla_X^{\toep , k} s  ) (u) : =  \Pi_{k,u}\bigl(  (\nabla
_X^{L^k \otimes \delta}s)(u) \bigr)   $$
It is easily proved that this is indeed a connection. More generally
we shall consider the connections 
$$\nabla ^{\toep, k} +A_k, \qquad A_k  \in \Om^1 (U, \End (\Hilb_k) )$$
where the family $(A_k ,k=1,2, \ldots)$ is a  Toeplitz operator. This has the following meaning. Let $p$ be the projection from $U \times M$ onto $U$. Then there exists a sequence $f( \cdot, k)$ of $ \Ga (U\times M, p^* (T^*U \otimes \C) )$ admitting an asymptotic expansion of the form $f_0 + k^{-1} f_1 + \ldots$ for the $\Ci$-topology on the compact subsets, such that 
$$ A_k (X) (u) = \Pi_{k,u} M _{f(X) ( \cdot, u ,k )}  : \Hilb_{k, u} \rightarrow \Hilb_{k, u}$$
for any vector field $X$ of $M$. Here $M_g$ denote the multiplication operator by
$g$. We call $f_0$ the principal symbol of $(A_k)$. 
This includes the
connection defined in section \ref{sec:existence}. Indeed
proposition \ref{prop:algebra-toeplitz} implies the 
\begin{prop} There exists $f_1 \in \Ga (U\times M, \pi^* (T^*U)
  \otimes \C )$ such that for any $k \geqslant k_0$, we have  
$$ \nabla^{\Hilb_k}_X = \nabla^{\toep,k }_X+ k^{-1} \Pi_k M_{f_1 (X)}
$$ for any vector field $X$ of $U$.
\end{prop}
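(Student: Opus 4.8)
The plan is to reduce the statement to Proposition~\ref{prop:algebra-toeplitz} by showing that the difference $\nabla^{\Hilb_k}_X - \nabla^{\toep,k}_X$, applied to a holomorphic section, is produced by a second-order holomorphic differential operator whose coefficients do not depend on $k$, scaled by $k^{-1}$. Recall from Theorem~\ref{theo:Hitchin} that $\nabla^{\Hilb_k}_X = \nabla^{L^k\otimes\delta}_X + k^{-1}P_k(X)$ with $P_k(X) = \tfrac14(\Delta^{G(X)} - H(X))$, while $\nabla^{\toep,k}_X s = \Pi_{k}\bigl(\nabla^{L^k\otimes\delta}_X s\bigr)$. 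So for a section $s$ of $\Hilb_k$ (i.e.\ a smooth family of holomorphic sections),
\begin{gather*}
\nabla^{\Hilb_k}_X s - \nabla^{\toep,k}_X s = \nabla^{L^k\otimes\delta}_X s - \Pi_{k}\bigl(\nabla^{L^k\otimes\delta}_X s\bigr) + k^{-1}P_k(X)s.
\end{gather*}
The first key observation is that $(\id - \Pi_{k})\nabla^{L^k\otimes\delta}_X s$ is the anti-holomorphic part of the image, so it is killed again by $\Pi_{k}$; hence $\Pi_{k}\bigl(\nabla^{\Hilb_k}_X s - \nabla^{\toep,k}_X s\bigr) = k^{-1}\Pi_{k} P_k(X)\Pi_{k} s$. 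But both sides of the claimed identity already land in $\Hilb_k$ and act on $\Hilb_k$, so it suffices to analyze $\Pi_{k}P_k(X)\Pi_{k}$.

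First I would note that $H(X)$ is a smooth function on $U\times M$ independent of $k$ (it is defined in~(\ref{eq:def_H}) with no $k$-dependence), so the term $-\tfrac14\Pi_{k}M_{H(X)}\Pi_{k}$ is already of the required form, contributing to $f_1$. The real work is the term $\tfrac14\Pi_{k}\Delta^{G(X)}\Pi_{k}$. Here $\Delta^{G(X)}$ is, by~(\ref{eq:def_delta}), a second-order holomorphic differential operator acting on $\Ga(M_u, L_u^k\otimes\delta_u)$; writing it in a local holomorphic frame as in Proposition~\ref{prop:res_int}, equation~(\ref{eq:delta}), we get $\Delta^{G(X)} = f_j\nabla^k_j + G_{ij}\nabla^k_i\nabla^k_j$ where $f_j$ and $G_{ij}$ are smooth and independent of $k$ — except that the covariant derivatives $\nabla^k$ themselves are $k$-dependent. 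The point is that the \emph{holomorphic} covariant derivatives $\nabla^k_i$ on $L^k\otimes\delta$ differ, in a local holomorphic frame, from the flat partial derivatives $\partial_{z^i}$ only by addition of a connection coefficient times the identity, and that coefficient scales with $k$ only through $L^k$; however, since we are working in the \emph{holomorphic} part and $\Delta^{G(X)}$ is a holomorphic differential operator, the ambiguity is harmless: $(\Delta^{G(X)})_{k\geqslant k_0}$ belongs to the algebra $\op_{sc}(L_u^k\otimes\delta_u, k\geqslant k_0)$ of Section~\ref{sec:an-algebra-toeplitz}, because its local expression is a polynomial of bounded degree in the $\nabla^{F}_\ell$ with smooth $k$-independent coefficients. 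Smoothness in $u$ is clear from the construction. Then Proposition~\ref{prop:algebra-toeplitz} (applied with $J = \{k \geqslant k_0\}$ and the Liouville measure), together with smooth dependence on $u$, yields a smooth function $g(X)$ on $U\times M$ with $\Pi_{k}\Delta^{G(X)}\Pi_{k} = \Pi_{k}M_{g(X)}\Pi_{k}$ for all $k$. Setting $f_1(X) = \tfrac14(g(X) - H(X))$ and checking linearity in $X$ (which follows from linearity of $X\mapsto\Delta^{G(X)}$ and $X\mapsto H(X)$) gives the claimed formula with $f_1\in\Ga(U\times M, \pi^*(T^*U)\otimes\C)$.

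The main obstacle is the bookkeeping about $k$-dependence in the previous paragraph: one must be careful that although $\Delta^{G(X)}$ acts on the $k$-dependent bundle $L^k\otimes\delta$ via the $k$-dependent Chern connection, its expression as an element of $\op_{sc}$ has coefficients genuinely independent of $k$ in the sense of that section's definition — i.e.\ one writes everything in terms of $\nabla^{F_j}$ rather than expanding the connection one-form, so the $k$ only enters through which bundle $F_j = L^k_u\otimes\delta_u$ one is acting on, exactly as the definition of $\op_{sc}(F)$ allows. Once that is granted, the rest is a direct application of Proposition~\ref{prop:algebra-toeplitz} and the identity $\Pi_{k}(\id-\Pi_{k}) = 0$.
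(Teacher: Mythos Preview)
Your proof is correct and follows exactly the approach the paper has in mind: the paper states this proposition as an immediate consequence of Proposition~\ref{prop:algebra-toeplitz}, and what you have written is precisely the unpacking of that implication. Your key computation --- that $\nabla^{\Hilb_k}_X s - \nabla^{\toep,k}_X s = k^{-1}\Pi_k P_k(X)\Pi_k s$ because the left side is already holomorphic, and then that $(P_k(X))_k\in\op_{sc}$ so Proposition~\ref{prop:algebra-toeplitz} supplies the multiplier --- is the intended argument, with the smoothness in $u$ and linearity in $X$ following from the explicit divergence formula in the proof of that proposition.
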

The results in \cite{An} also rely on the comparison between $\nabla^{\Hilb_k}_X$ and $\nabla^{\toep,k }_X$ (without the metaplectic correction).

The following theorem says that the curvature of these connections is a 
Toeplitz operator in a semi-classical sense. 

\begin{theo}  \label{theo:curvature_semi_classique}
There exists a sequence $g ( \cdot, k) \in \Ga (U\times M, p^* (
\wedge^2 T^*U \otimes \C ))$ admitting an asymptotic
expansion of the form $g_0 + k^{-1} g_1 + \ldots$ for the
$\Ci$-topology on compact subsets, such that the curvature of
$\nabla^{\toep, k} + A_k$ satisfies
\begin{xalignat*}{2}
 R^{A_k} (X,Y)_u =  \Pi_{k,u} M_{ g(X,Y)(\cdot ,u,k) } + O( k^{-\infty}) 
\end{xalignat*}
where the $O(k^{-\infty})$ is uniform on compact set of
$U$. Furthermore, $g_0$ is given by 
$$ g_0(X,Y) = X.  f_0  (Y) - Y . f_0 (X) - f_0 ([X,Y]) $$
with $f_0$ the principal symbol of $(A_k)$.
\end{theo}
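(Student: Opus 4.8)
The plan is to compute the curvature $R^{A_k}(X,Y)$ directly from the definition $\nabla^{\toep,k}+A_k$ and to reduce everything, term by term, to a Toeplitz operator modulo $O(k^{-\infty})$. First I would recall the two basic microlocal facts about the Bergman-type projectors $\Pi_{k,u}$ in this geometric-quantization setting, both available from \cite{oim_qm} and \cite{oim_mc}: (i) if $P$ is a differential operator lying in the ideal generated by the anti-holomorphic derivations, then $\Pi_{k,u}P\Pi_{k,u}=O(k^{-\infty})$, and more generally $\Pi_{k,u}(1-\Pi_{k,u})=O(k^{-\infty})$ in the relevant operator norms on compact sets; (ii) the product $\Pi_{k,u}M_f\Pi_{k,u}M_g\Pi_{k,u}$ is again, modulo $O(k^{-\infty})$, of the form $\Pi_{k,u}M_{h(\cdot,u,k)}\Pi_{k,u}$ with $h$ admitting an asymptotic expansion whose leading term is $fg$. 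These are the only inputs I need beyond what is in the excerpt.

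Next I would expand $R^{A_k}(X,Y)=[\nabla^{\toep,k}_X+A_k(X),\nabla^{\toep,k}_Y+A_k(Y)]-\nabla^{\toep,k}_{[X,Y]}-A_k([X,Y])$ into four groups of terms. The purely Toeplitz part $[A_k(X),A_k(Y)]-A_k([X,Y])$ plus the derivative terms $X.A_k(Y)-Y.A_k(X)$ is handled by fact (ii) and by differentiating the symbol expansion under the sign; its contribution to $g_0$ is exactly $X.f_0(Y)-Y.f_0(X)-f_0([X,Y])$ minus whatever the curvature of $\nabla^{\toep,k}$ alone contributes at leading order. The mixed terms $[\nabla^{\toep,k}_X,A_k(Y)]$ and $[\nabla^{\toep,k}_Y,A_k(X)]$ I would rewrite using $\nabla^{\toep,k}_X=\Pi_k\nabla^{L^k\otimes\delta}_X\Pi_k$ (as an operator on sections of $\Hilb_k$, after composing with the inclusion): one commutes $\nabla^{L^k\otimes\delta}_X$ past $M_{f(Y)}$, producing $M_{X.f(Y)}$ plus a term involving $\nabla^{L^k\otimes\delta}_X$ acting on the sandwiched projector, which by fact (i) is negligible; the leading symbol of $X.f(Y)$ feeds into $g_0$. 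The core term is the curvature of $\nabla^{\toep,k}$ itself, $[\nabla^{\toep,k}_X,\nabla^{\toep,k}_Y]-\nabla^{\toep,k}_{[X,Y]}$. Here I would write it as $\Pi_k$ applied to $R^{L^k\otimes\delta}(X,Y)=kR^{L}(X,Y)+R^{\delta}(X,Y)$ plus commutator terms in which a derivative $\nabla^{L^k\otimes\delta}$ hits a factor $\Pi_k$; using $R^L(X,Y)=0$ (since $L$ is pulled back from $M$) and Proposition \ref{prop:courbure_demi_formes_2} for $R^\delta(X,Y)$, the main contribution is the multiplication operator by the function $\tfrac18\operatorname{tr}(\mu(X)\overline\mu(Y)-\mu(Y)\overline\mu(X))$, which is $O(1)$ and therefore lands in $g_1$ rather than $g_0$; the remaining projector-derivative commutators are shown to be $O(k^{-\infty})$ by fact (i), exactly as in the proof of Proposition \ref{prop:res_int}.

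Assembling the four groups, every piece is of the form $\Pi_{k,u}M_{(\cdot,u,k)}\Pi_{k,u}$ modulo $O(k^{-\infty})$, with symbols adding up to a well-defined asymptotic expansion $g_0+k^{-1}g_1+\cdots$, and inspection of the leading order shows $g_0(X,Y)=X.f_0(Y)-Y.f_0(X)-f_0([X,Y])$ because the curvature of $\nabla^{\toep,k}$ contributes nothing at order $k^0$ once $R^L=0$ is used. I expect the main obstacle to be the careful bookkeeping of the commutators in which a covariant derivative falls on a projector $\Pi_{k,u}$: one must check that, after the sandwiching by the outer projectors required to view these as endomorphisms of $\Hilb_k$, each such term is genuinely $O(k^{-\infty})$ uniformly on compact subsets of $U$ — this is where fact (i) and the smooth dependence of all data on $u$ must be invoked with some care, and it is the step most prone to hidden errors. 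Everything else is a routine, if lengthy, symbol computation of the type already carried out in \cite{oim_mc}.
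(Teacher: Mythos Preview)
Your plan has a real gap at the step you yourself flag as ``most prone to hidden errors'': the terms in which a covariant derivative falls on a projector are \emph{not} $O(k^{-\infty})$. Concretely, for $s\in\Hilb_{k,u}$ one has
\[
[\nabla^{\toep,k}_X,\nabla^{\toep,k}_Y]s
= \Pi_k R^{\delta}(X,Y)s
-\Pi_k\nabla_X(1-\Pi_k)\nabla_Y s
+\Pi_k\nabla_Y(1-\Pi_k)\nabla_X s,
\]
and the last two terms contribute at order $k^{0}$, not at order $k^{-\infty}$. (Your ``fact (i)'' in the form $\Pi_{k,u}(1-\Pi_{k,u})=O(k^{-\infty})$ is vacuous---it is identically zero---and does not control $\Pi_k\nabla_X(1-\Pi_k)\nabla_Y\Pi_k$, in which a derivative separates the two projectors.) In fact the whole content of the theorem in the case $A_k=0$ is that these projector--derivative pieces cancel the $R^{\delta}(X,Y)$ term at leading order; this is exactly the ``miraculous cancellation, happening only with the metaplectic correction'' referred to after Theorem~\ref{theo:curvature}, and it is the substance of Theorem~7.2 in \cite{oim_mc}. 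Your argument never produces this cancellation, and indeed contains the internal inconsistency that an $O(1)$ multiplication operator (the function $\tfrac18\operatorname{tr}(\mu(X)\overline\mu(Y)-\mu(Y)\overline\mu(X))$) is claimed to land in $g_1$ rather than $g_0$.

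The paper itself does not give an independent proof here: it simply observes that the statement is the $A_k$-perturbed version of Theorem~7.1 of \cite{oim_mc} and that the proof there carries over verbatim. That proof analyses the Schwartz kernel of $\Pi_k\nabla_X(1-\Pi_k)\nabla_Y\Pi_k$ via the asymptotics of the Bergman kernel and shows that its leading symbol is precisely $-\tfrac18\operatorname{tr}(\mu(Y)\overline\mu(X))$, which matches $R^{\delta}$ from Proposition~\ref{prop:courbure_demi_formes_2}. Once that is established, your handling of the mixed and purely Toeplitz terms is fine and does give $g_0=X.f_0(Y)-Y.f_0(X)-f_0([X,Y])$; but the core step---the one you dismissed as $O(k^{-\infty})$---is where the actual work lies. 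Also note that the reference to Proposition~\ref{prop:res_int} is misplaced: that proposition computes a commutator of \emph{differential operators} modulo the ideal $\I$, with no projectors and no $O(k^{-\infty})$ estimate involved.
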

This theorem is a generalization of theorem 7.1 of \cite{oim_mc},
where we did not consider the terms $(A_k)$. The proof is an immediate
generalization of the one in $\cite{oim_mc}$. This provides another
proof of theorem \ref{theo:curvature}. 

To describe the parallel transport along a path $\ga$ in the bundle
$\Hilb_k$ we introduce the notion of half-form isomorphism.  For any $u , u' \in U$ and $x \in M$ denote by $\pi_{u',u,x}$ the projection from $E_{u',x}$ to
$E_{u,x}$ with kernel $\overline E_{u',x} $.
We say that a linear isomorphism $\Psi$ of $\Hom (\delta_{u,x},
\delta_{u',x})$ is a {\em half-form isomorphism} if its square is the pull-back by $\pi_{u ' , u ,x }$, more precisely 
$$  \varphi_{u', x} \circ \Psi ^2 = \pi_{u ' , u ,x }^* \circ  \varphi_{u , x }. $$  
Such an isomorphism is unique up to a plus or minus sign.  If $\ga$ is a path of $U$, then for any $x$, there exists a unique continuous path of half-form morphism $\delta_{\ga(0) ,x } \rightarrow \delta_{\ga (t) ,x }$ starting from the identity. We denote by $\Psi (\ga)$ the morphism $\delta_{\ga(0)} \rightarrow \delta_{\ga(1)}$ obtained at $t =1$.

\begin{theo} \label{theo:paral_transp}
Let $u, u' \in U$ and $\ga$ be a path from $u$ to $u'$. For any $k
\geqslant k_0$, let $T_k :
\Hilb_{k,u} \rightarrow \Hilb_{k, u'}$ be the parallel transport along $\ga$ in the
bundle $\Hilb_k$ for the connection $\nabla^{\toep,k}+A_k$. Then the
Schwartz kernels of the operators $T_k$ have the following form 
$$ T_k (x,y) = \Bigl( \frac{k}{2 \pi } \Bigr)^{n} F^k(x,y) \otimes f (x,y , k) + O( k^{-\infty})$$ 
where $n$ is half the dimension of $M$ and 
\begin{itemize}
\item $F$ is a section of $L \boxtimes \overline {L}$ such that $| F(x,y)
| <1 $ if $x \neq y$, $ F (x,x) = v \otimes \bar{v}$ for all $x$ and $v \in L_x$ of norm 1, and $ \bar{\partial}_{j_{u'} \times -j_{u}} F \equiv 0 $
modulo a section vanishing to any order along the diagonal.
\item
  $f(.,k)$ is a sequence of sections of  $ \delta_{u'}  \boxtimes  \bar{\delta}_u
\rightarrow M^2$ which admits an asymptotic expansion in the $\Ci$
  topology of the form 
$$ h(.,k) = h_0 + k^{-1} h_1 + k^{-2} h_2 + ...$$
whose coefficients satisfy $\bar{\partial} _{j_{u'} \times -j_{u}} f_i \equiv 0  $
modulo a section vanishing to any order along the
diagonal. 
\item If the principal symbol of $(A_k)$ vanishes, then $h_0 (x,x) =
  \Psi ( \ga) .v \otimes \bar{v}$ for any $x \in M$ and $v \in
  \delta_{u,x}$ with norm 1.
\end{itemize}
\end{theo}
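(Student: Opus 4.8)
The plan is to prove Theorem \ref{theo:paral_transp} by setting up a differential equation for the parallel transport and solving it order by order using the microlocal/Fourier integral operator machinery from \cite{oim_qm} and the semi-classical estimates above. First I would observe that the parallel transport $T_k$ along $\ga$ satisfies the ODE $\frac{d}{dt} T_k(t) = -(\nabla^{\toep,k}_{\dot\ga(t)} + A_k(\dot\ga(t))) T_k(t)$ with $T_k(0) = \id$, where everything is understood on the pulled-back bundle over the path. The strategy is to make an Ansatz for the Schwartz kernel of $T_k(t)$ of the stated form $(k/2\pi)^n F_t^k \otimes f_t(\cdot,k)$, with $F_t$ a section of $L_{\ga(t)} \boxtimes \overline{L}_{\ga(0)}$ satisfying $|F_t(x,y)| < 1$ off the diagonal, $F_t(x,x) = v \otimes \bar v$, and $\bar\partial_{j_{\ga(t)}\times -j_{\ga(0)}} F_t \equiv 0$ to infinite order along the diagonal — such ``admissible'' Lagrangian sections are exactly the building blocks of Fourier integral operators in the sense of \cite{oim_qm}, and the diagonal is the canonical relation here since parallel transport should be microlocally the identity. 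The phase data $F_t$ is rigid: it is determined geometrically by the complex structures $j_{\ga(0)}$ and $j_{\ga(t)}$ and the prequantum connection, independent of $A_k$; the content is in the symbol $f_t$.

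Next I would plug the Ansatz into the ODE. Applying the Toeplitz connection $\nabla^{\toep,k}$ amounts to differentiating and then reprojecting with $\Pi_{k,\ga(t)}$; by the stationary-phase / composition calculus for these Toeplitz-type kernels, the leading effect of $\Pi_{k,\ga(t)}$ on an admissible kernel is to preserve it modulo $O(k^{-\infty})$, and the action of $\nabla^{L^k\otimes\delta}_{\dot\ga}$ splits into a term differentiating $F^k$ (producing a factor $k$ times the derivative of $F$ along the path, which must cancel because $F$ is parallel for the prequantum connection up to the infinite-order-vanishing ambiguity) and lower-order terms acting on the half-form symbol $f$. This yields a transport equation for the principal symbol $h_0$: a first-order linear ODE along $\ga$ whose driving term, in the directions transverse to $U$, is governed by the connection $\nabla^\delta$ on the half-form bundle (recall $\nabla^E = \pi\circ\nabla^{E\oplus\bar E}$) plus, if present, the principal symbol $f_0$ of $(A_k)$. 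When $f_0 = 0$ this ODE restricted to the diagonal $x = y$ is precisely the defining equation of the half-form parallel transport: writing $h_0(t)(x,x) = \psi_t(x)\otimes\bar v$, one checks $\psi_t$ solves the same equation as the continuous path of half-form morphisms $\delta_{\ga(0),x} \to \delta_{\ga(t),x}$ starting from the identity, because the relevant connection coefficient is exactly the one appearing in the relation $\varphi_{u',x}\circ\Psi^2 = \pi^*_{u',u,x}\circ\varphi_{u,x}$ differentiated along $\ga$. Hence $h_0(1)(x,x) = \Psi(\ga).v \otimes \bar v$ as claimed, and the existence of the full asymptotic expansion $h_0 + k^{-1}h_1 + \cdots$ follows by solving the successive transport equations for $h_1, h_2, \ldots$, each of which is an inhomogeneous version of the same linear ODE with source built from the previously determined $h_j$ and the subprincipal data of the connection and of $(A_k)$.

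Finally I would close the argument by a standard convergence/remainder estimate: having constructed a formal solution $\tilde T_k$ (Borel-summing the symbol expansion), one shows $\tilde T_k$ solves the parallel transport ODE modulo $O(k^{-\infty})$ in operator norm, and then uses Duhamel together with the uniform boundedness of $\nabla^{\toep,k}$-type operators on $\Hilb_k$ — which is where Proposition \ref{prop:algebra-toeplitz} and Theorem \ref{theo:curvature_semi_classique} enter — to conclude $\|T_k - \tilde T_k\| = O(k^{-\infty})$; by the standard fact that an operator with $O(k^{-\infty})$ Schwartz kernel off the diagonal and admissible leading behavior is determined up to $O(k^{-\infty})$, this gives the stated form for $T_k(x,y)$. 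The main obstacle I anticipate is the transport equation step: one must carefully track how the reprojection $\Pi_{k,\ga(t)}$ interacts with the Lagrangian kernel as the complex structure $j_{\ga(t)}$ varies with $t$ — the canonical relation (the diagonal) stays fixed but the ``polarization'' moves, so the composition calculus of \cite{oim_qm} must be applied with both an evolving source and target polarization, and getting the half-form factor $\Psi(\ga)$ out with the correct normalization (no stray sign, exact match with the geometric definition) requires identifying the precise connection coefficient, which is the delicate computation generalizing theorem 7.1 of \cite{oim_mc}.
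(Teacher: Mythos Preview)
Your overall strategy is exactly the one the paper follows: pass to the Schwartz kernel, regard it as a Lagrangian state on the diagonal of $M \times M^-$, derive a transport equation for the symbol from the parallel-transport ODE, solve by successive approximations, and close with a Duhamel-type remainder estimate. You have also correctly located the delicate point.

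One step, however, is not right as stated and would derail the symbol computation. You write that the term coming from $k\,\nabla^L_{\partial_t} F$ ``must cancel because $F$ is parallel for the prequantum connection up to the infinite-order-vanishing ambiguity.'' This is false, and the actual mechanism is the opposite. The section $F$ is pinned on $I \times \Lambda$ (equal to the flat section $s$), but off $\Lambda$ it is constrained only by $\bar\partial_{j_t} F \equiv 0$ to infinite order; since $j_t$ varies with $t$, one has $\nabla^L_{\partial_t} F = h\,F$ with $h$ vanishing only to \emph{second} order along $\Lambda$, not to infinite order. The paper records (via proposition~9.4 of \cite{oim_mc}) that the antiholomorphic Hessian of $h$ on $\Lambda$ is $\bar Z_1 \bar Z_2\, h = -\tfrac{1}{2}\,\om(\bar Z_1,\mu(\bar Z_2))$. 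Multiplying by $k$ and projecting with $\Pi_k$, this second-order vanishing yields a finite nonzero leading contribution $-c(t,x)\,f_0$ (by the stationary-phase lemma quoted as theorem~4.1 of \cite{oim_hf}), where $c$ is the explicit function of equation~(\ref{eq:def_c}). The transport equation on $\Lambda$ is therefore $\nabla^{\delta}_{\partial_t} f_0 - c\,f_0 = 0$, not $\nabla^{\delta}_{\partial_t} f_0 = 0$. A separate geometric computation then shows that the natural derivative $D_t$ on $\iota^*\delta$, induced from the identification $\iota^*\wedge^{\top}E^* \simeq p^*\wedge^{\top}(T^*\Lambda\otimes\C)$, satisfies $D_t = \nabla^{\delta}_{\partial_t} - c$. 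Thus the $-c$ produced by the non-cancelling $kh$ term is precisely what converts $\nabla^{\delta}_{\partial_t}$ into the half-form transport $D_t$, and this is why the solution is $\Psi(\ga)$. Without this contribution the connection $\nabla^{\delta}$ alone gives the wrong leading symbol; this is the ``miraculous cancellation'' specific to the metaplectic correction that the paper highlights.
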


For $A_k = 0$, this is theorem 7.1 of \cite{oim_mc}. In the next section we provide  a proof of theorem
\ref{theo:paral_transp} more direct than the one in \cite{oim_mc} and
which shows clearly the role of the half-form bundle.

\section{Parallel transport}  \label{sec:parallel-transport}

This section is devoted to the proof of theorem
\ref{theo:paral_transp}. As previously, assume that $(M, \om)$ is a
symplectic manifold with a prequantum bundle $L_M$. 
Choose an open interval $I \subset \R$ for the parameter space. Introduce a smooth family $(j_t)_{t
  \in I}$ of
positive complex structures of $(M, \om)$. We use the same notation as before. For
instance, $M_t$ is the K{\"a}hler manifold $(M, \om ,
j_t)$ and $L_t$ is the prequantum bundle with its holomorphic
structure induced by $j_t$. Instead of half-form bundles we
begin with a smooth family $(H_t \rightarrow M_t)_{t \in I}$ of holomorphic
Hermitian line bundles. Denote by $\Hilb_{k,t}$ the space $H^0(M_t,
L^k_t \otimes H_t)$ and by $\Pi_{k,t}$ the orthogonal projector
from $\Ga (M_t , L^k_t \otimes H_t)$ onto $\Hilb_{k,t}$.

Let $\Lambda$ be a closed Lagrangian submanifold of $(M, \om)$ and $s$
be a flat unitary section of the restriction of the prequantum bundle to
$\Lambda$. Introduce a section $F$ of $L \rightarrow I \times M$ such that
$$ F( t,x ) = s (x) , \qquad \forall x \in \Lambda$$  
the pointwise norm of $F$ is $<1$ outside
$I \times \Lambda $ and $\overline \partial F$ vanishes to any order
along $ I \times \Lambda$. It is not obvious but nevertheless true
that such a section exists (proposition 2.1 of \cite{oim_qm}). It is unique up to a section
vanishing to any order along $\Lambda$. 

We say that a sequence $(f(\cdot, k))_k$ of $\Ga ( I \times M, H)$
is a symbol if it admits an asymptotic expansion
for the $\Ci $ topology on compact subsets of the form $f_0 + k^{-1} f_1
+ \ldots$ with coefficients in $ \Ga ( I \times M, H)$. We
call $f_0$ the leading coefficient of $f( \cdot , k)$ even if it
vanishes. Recall the following basic result (lemma 2.5 of \cite{oim_qm}). 

\begin{theo} \label{theo:proj_sec_lag} 
For any symbol $f( \cdot, k)$ of $\Ga ( I \times M, H)$ we have
$$\Pi_k (F^k f( \cdot , k ))  = F^k g ( \cdot , k ) + O(k^{-\infty})$$
where $g( \cdot, k )$ is a symbol of $\Ga ( I \times M,
H)$. Furthermore the restrictions to $\Lambda$ of the leading
coefficients of $f(\cdot, k)$ and $g ( \cdot, k )$ are equal. 
\end{theo}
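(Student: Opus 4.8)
The plan is to construct $g(\cdot,k)$ by successive approximations and to control the remainder at each stage with two standard tools. The first is the almost-holomorphic extension of Proposition 2.1 of \cite{oim_qm} (already used to produce $F$): from a section given along $I\times\Lambda$ only, it produces a section of $H$ over $I\times M$ whose $\bar{\partial}$ vanishes to infinite order along $I\times\Lambda$, unique up to such a section. The second is the spectral gap of the Kodaira Laplacian: since $L$ is positive, $\Box_k=\bar{\partial}^{*}\bar{\partial}$ acting on $\Ga(M_t,L_t^k\otimes H_t)$ has kernel $\Hilb_{k,t}$ and first non-zero eigenvalue $\geqslant ck$ for $k$ large, uniformly for $t$ in compact subsets of $I$; hence $\|(\operatorname{Id}-\Pi_k)\psi\|_{L^2}\leqslant(ck)^{-1/2}\|\bar{\partial}\psi\|_{L^2}$, an estimate that a routine semi-classical elliptic bootstrap upgrades to the $\Ci$ topology. (One could instead go through the near-diagonal expansion of the Bergman kernel and a stationary phase argument, but that is not needed here.)

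\emph{Step 1: the leading coefficient.} Let $g_0$ be an almost-holomorphic extension of $f_0|_\Lambda$, so $g_0|_\Lambda=f_0|_\Lambda$ and $\bar{\partial}g_0$ vanishes to infinite order along $I\times\Lambda$. Then $\bar{\partial}(F^kg_0)=k(\bar{\partial}F)F^{k-1}g_0+F^k\bar{\partial}g_0$, and each summand is the product of a section vanishing to infinite order along $I\times\Lambda$ by a power of $F$; since $|F|<1$ off $I\times\Lambda$ while $|F|^{2k}$ concentrates in an $O(k^{-1/2})$-neighbourhood of $I\times\Lambda$, both are $O(k^{-\infty})$. The gap estimate gives $\Pi_k(F^kg_0)=F^kg_0+O(k^{-\infty})$, hence $\Pi_k(F^kf)=F^kg_0+\Pi_k\bigl(F^k(f-g_0)\bigr)+O(k^{-\infty})$, and it remains to analyse $\Pi_k(F^kh)$ for a symbol $h=f-g_0$ whose leading coefficient $h_0=f_0-g_0$ vanishes along $\Lambda$.

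\emph{Step 2: a gain lemma, and conclusion.} I claim that if $h(\cdot,k)$ is a symbol with $h_0|_\Lambda=0$, then $\Pi_k(F^kh)=\Pi_k(F^kh')+O(k^{-\infty})$ for some symbol $h'$ of order $\geqslant1$. By a partition of unity one works near $\Lambda$ (away from it $F^kh=O(k^{-\infty})$), where $h_0=\sum_\alpha\ell_\alpha r_\alpha$ with $\ell_\alpha$ functions cutting out $\Lambda$. The geometric input is the normal form of $F$ along $\Lambda$: for each $\alpha$ there are a vector field $V_\alpha$ of type $(1,0)$ and a section $e_\alpha$ vanishing to infinite order along $\Lambda$ with $\nabla^{L}_{V_\alpha}F=\ell_\alpha F+e_\alpha$, so that $\nabla^{L^k}_{V_\alpha}F^k=k\ell_\alpha F^k+O(k^{-\infty})$ and
$$F^kh_0=\tfrac{1}{k}\sum_\alpha\bigl(\nabla^{L^k}_{V_\alpha}F^k\bigr)r_\alpha+O(k^{-\infty})=\tfrac{1}{k}\sum_\alpha\Bigl(\nabla^{L^k\otimes H}_{V_\alpha}(F^kr_\alpha)-F^k\nabla^{H}_{V_\alpha}r_\alpha\Bigr)+O(k^{-\infty}).$$
Since $\overline{V}_\alpha$ is of type $(0,1)$, integration by parts against a holomorphic section (the Tuynman trick of Proposition \ref{prop:algebra-toeplitz}) gives $\Pi_k\nabla^{L^k\otimes H}_{V_\alpha}=\Pi_k M_{-\operatorname{div}V_\alpha}$; hence $\Pi_k(F^kh_0)=\Pi_k(F^k\,k^{-1}b)+O(k^{-\infty})$ for a smooth section $b$, and adding $k^{-1}h_1+k^{-2}h_2+\dots$, already $F^k$ times a symbol of order $\geqslant1$, establishes the claim. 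Now iterate: starting from $h=f-g_0$, alternately applying the gain lemma and Step 1 at orders $1,2,3,\dots$ produces sections $g_1,g_2,\dots$ with $\bar{\partial}g_i$ vanishing to infinite order along $I\times\Lambda$, such that for every $N$ one has $\Pi_k(F^kf)=F^k(g_0+k^{-1}g_1+\dots+k^{-N}g_N)+O(k^{-N})$ in the $\Ci$ topology on compact subsets. A Borel summation provides a symbol $g(\cdot,k)$ with these coefficients, and then $\Pi_k(F^kf)=F^kg(\cdot,k)+O(k^{-\infty})$ with $g_0|_\Lambda=f_0|_\Lambda$, as claimed.

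The main obstacle is the normal form $\nabla^{L}_{V_\alpha}F=\ell_\alpha F+e_\alpha$ with $V_\alpha$ of type $(1,0)$: it records the transversal behaviour of $F$ along $\Lambda$ and rests on the same analysis as the existence and uniqueness of $F$, namely on combining the facts that $F$ restricts to the flat section on $\Lambda$, that $|F|<1$ off $\Lambda$, and that $\bar{\partial}F$ vanishes to infinite order along $\Lambda$ (Proposition 2.1 of \cite{oim_qm}). The other ingredients — the spectral gap and the attendant $\Ci$ elliptic estimates, the uniformity in the complex-structure parameter $t$, and the Borel summation — are routine.
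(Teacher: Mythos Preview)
The paper does not prove this theorem: it is stated with the attribution ``Recall the following basic result (lemma 2.5 of \cite{oim_qm})'' and no argument is given. What you have written is therefore an independent proof of the cited lemma, and as such it is essentially correct.

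Your two ingredients are the right ones. Step 1 is immediate from the spectral gap once $g_0$ is almost holomorphic. The substance is in the gain lemma, and your mechanism --- writing $\ell_\alpha F^k=k^{-1}\nabla^{L^k}_{V_\alpha}F^k+O(k^{-\infty})$ and then integrating the $(1,0)$-derivative by parts against holomorphic test sections --- is sound. The normal form you invoke, $\nabla^L_{V_\alpha}F=\ell_\alpha F+e_\alpha$ with $e_\alpha$ flat along $\Lambda$, does hold: writing $\varphi=\nabla^LF/F$, one has $\varphi^{0,1}\equiv 0$ and $\varphi|_\Lambda=0$, and the curvature identity gives $\bar Z\cdot\varphi(W)|_\Lambda=\tfrac{1}{i}\om(\bar Z,W)$, so the differentials of the components $\varphi(\partial_{z^i})$ are independent in $N^*\Lambda\otimes\C$; a formal induction then shows these components generate $I_\Lambda$ modulo $I_\Lambda^\infty$, which is exactly your claim. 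This is, as you say, the same circle of ideas as the construction of $F$ itself.

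By way of comparison: the route through the near-diagonal expansion of the Bergman kernel and stationary phase that you mention and set aside is the more common way to obtain this result, and is closer in spirit to how \cite{oim_qm} is organized. Your argument trades that machinery for the spectral gap plus the elementary integration by parts already isolated in the present paper as Proposition \ref{prop:algebra-toeplitz}; the price is that you lean a bit harder on the formal transverse geometry of $F$. Both approaches give the same leading-order statement $g_0|_\Lambda=f_0|_\Lambda$.
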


Another important fact is that a family $( \Psi_k \in \Ga(U,
\Hilb_k))$ of the form $$ \Psi _k = F^k f( \cdot,
k) + O(k^{-\infty})$$ is determined by the restriction of the symbol
$f( \cdot , k)$ to $I \times \Lambda$. Indeed one proves that $\Psi_k$ is  $O(k^{-\infty})$ if
and only if the restriction of $f( \cdot, k)$ to $I \times \Lambda$ is 
$O(k^{-\infty})$ (lemma 1 of \cite{oim_bt})

The next theorem involves
the function $c$ of $I \times \Lambda$  defined by 
\begin{gather} \label{eq:def_c}
 c(t,x) = \frac{1}{4} \sum_{i = 1}^{n} \om ( \mu_{t,x} (
\overline{\partial}_i) , \overline \partial _i ) , \qquad  (t,x) \in I
\times \Lambda
\end{gather}
Here $(\partial_1, \ldots , \partial_n)$ is a basis of $T^{1,0}_x M_t $
such that $\partial_i + \overline{\partial}_i$ is tangent to $\Lambda$
and $\frac{1}{i} \om ( \partial_i ,\bar{\partial}_j) =
\delta_{ij}$ for any indices $i$ and $j$. As in section
\ref{sec:vari-compl-struct}, $\mu$ is the map such that 
$$\frac{d}{dt} j_t
= \mu_{t,x} + \bar{\mu}_{t,x}, \qquad \mu_{t,x} : T^{0,1}_x M_t \rightarrow T^{1,0}_x M_t$$
Introduce a connection on the bundle $H \rightarrow I \times M$. 

\begin{theo} \label{theo:proj_sans_demi_form}
For any symbol $f( \cdot, k) $ of $\Ga
  (I \times M ,
  H)$, we have that 
$$ \Pi_k ( \nabla^{L^k \otimes H} _{\partial_t} ( F^k  f(\cdot,
k))) = F^k  g(\cdot ,k) + O( k^{-\infty})$$  
where $g( \cdot, k)$ is a symbol of $\Ga ( I \times M ,
  H)$. Furthermore, the leading coefficients $f_0$ and $g_0$ satisfy 
$$  g_0 (t,x) = ( \nabla^H_{\partial_t} f_0) (t,x) - c(t,x)f_0 (t,x)  , \qquad
\forall (t,x) \in I \times \Lambda .$$
\end{theo}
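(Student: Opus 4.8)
The plan is to reduce the statement to Theorem \ref{theo:proj_sec_lag} by analyzing the action of $\nabla^{L^k\otimes H}_{\partial_t}$ on a section of the form $F^k f(\cdot,k)$ and isolating the piece that is not already killed by $\Pi_k$. First I would write $\nabla^{L^k\otimes H}_{\partial_t}(F^k f) = k\, F^{k-1}(\nabla^L_{\partial_t}F)\otimes f + F^k \nabla^H_{\partial_t}f$. The term $F^k\nabla^H_{\partial_t}f$ is already of the form $F^k\times(\text{symbol})$, and its leading coefficient restricted to $\Lambda$ is $\nabla^H_{\partial_t}f_0$, which accounts for the first term in the formula for $g_0$. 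The real work is the term with the factor $k$: I want to show that $k(\nabla^L_{\partial_t}F)\cdot F^{-1}$ makes sense as a symbol along $\Lambda$ (i.e.\ that $\nabla^L_{\partial_t}F$ vanishes along $\Lambda$ to first order in a way that cancels the $k$), and compute its restriction to $\Lambda$ to get the $-c(t,x)$ term.

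The key step is the following local computation near a point of $\Lambda$. Since $F$ restricts to the flat unitary section $s$ on $\Lambda$ and $\bar\partial_{j_t} F$ vanishes to infinite order there, I would differentiate the defining properties of $F$ with respect to $t$. The identity $\frac{d}{dt}j_t = \mu_t + \bar\mu_t$ enters because $\nabla^L_{\partial_t}F$ picks up the variation of the complex structure through the $\bar\partial$-condition: roughly, $\partial_t(\bar\partial_{j_t}F) = 0$ along $\Lambda$ forces $\nabla^L_{\partial_t}F$ to have a prescribed anti-holomorphic behaviour involving $\mu_{t,x}$. Using a local holomorphic frame of $L_t$ adapted to $\Lambda$, with the normalized basis $(\partial_i)$ of $T^{1,0}_xM_t$ as in (\ref{eq:def_c}), one expands $F$ in a Taylor series transverse to $\Lambda$; the prequantum curvature $\frac1i\omega$ forces $|F|<1$ off $\Lambda$ with a definite Hessian, and matching this Hessian against the $t$-derivative produces exactly the quadratic form $\frac14\sum_i\omega(\mu_{t,x}(\bar\partial_i),\bar\partial_i)$. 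After applying $\Pi_k$, Theorem \ref{theo:proj_sec_lag} lets me replace everything by $F^k\times(\text{symbol})$ and read off the leading coefficient on $\Lambda$, which combines to give $g_0 = \nabla^H_{\partial_t}f_0 - c\, f_0$.

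I expect the main obstacle to be the second point: extracting from $\nabla^L_{\partial_t}F$ the right first-order term along $\Lambda$ and identifying its restriction with $-c(t,x)$. This requires being careful about the Gaussian decay profile of $F$ transverse to $\Lambda$ (which is governed by the imaginary part of a quadratic form attached to $j_t$ and is where positivity of the complex structures is used), and about how that profile moves with $t$. Once this local model is in place, the rest is bookkeeping: the $k\,F^{k-1}(\nabla^L_{\partial_t}F)\otimes f$ term becomes $F^k$ times a symbol whose leading coefficient on $\Lambda$ is $-c\,f_0$, the $F^k\nabla^H_{\partial_t}f$ term contributes $\nabla^H_{\partial_t}f_0$, and applying $\Pi_k$ and invoking Theorem \ref{theo:proj_sec_lag} (together with the fact that $\bar\partial F$ and the error terms vanish to infinite order along $\Lambda$, so they are absorbed into the $O(k^{-\infty})$) finishes the proof.
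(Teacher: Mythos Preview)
Your overall strategy matches the paper's: write $\nabla^L_{\partial_t} F = hF$ for a function $h$, expand $\nabla^{L^k\otimes H}_{\partial_t}(F^k f) = F^k(khf + \nabla^H_{\partial_t} f)$, and project. But there is a genuine gap in how you handle the $khf$ term. First, $h$ vanishes to \emph{second} order along $I\times\Lambda$, not first: both $h$ and all its first derivatives vanish there (this is proposition~9.4 of \cite{oim_mc}, which the paper invokes). Second, and more importantly, $khf$ is \emph{not} a symbol: it has no asymptotic expansion in powers of $k^{-1}$, and its restriction to $\Lambda$ is identically zero, not $-cf_0$. So you cannot feed it into Theorem~\ref{theo:proj_sec_lag}, and your sentence ``the $k\,F^{k-1}(\nabla^L_{\partial_t}F)\otimes f$ term becomes $F^k$ times a symbol whose leading coefficient on $\Lambda$ is $-c\,f_0$'' is simply false as stated.

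The contribution $-c f_0$ appears only \emph{after} applying $\Pi_k$, via a stationary-phase mechanism: because $|F|$ has a Gaussian profile transverse to $\Lambda$ at scale $k^{-1/2}$, the second-order vanishing of $h$ together with the factor $k$ produces, upon projection, a bounded term whose value on $\Lambda$ is governed by the transverse Hessian of $h$. The paper calls on a separate result for this (theorem~4.1 of \cite{oim_hf}), which says $\Pi_k(F^k\cdot khf) = F^k g + O(k^{-\infty})$ with $g_0|_\Lambda = -\tfrac12 f_0 \sum_i \bar\partial_i\bar\partial_i h$; the Hessian identity $\bar Z_1.\bar Z_2. h = -\tfrac12\omega(\bar Z_1,\mu(\bar Z_2))$ on $\Lambda$ then turns this into $-cf_0$. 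You have correctly anticipated the ingredients (the Hessian, the Gaussian profile), but you have placed them in the wrong step: the passage from $F^k\cdot khf$ to $F^k\cdot(\text{symbol})$ is not bookkeeping, it is the heart of the argument, and it requires a projection lemma strictly stronger than Theorem~\ref{theo:proj_sec_lag}.
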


\begin{proof} 
By proposition 9.4 in \cite{oim_mc}, we have $ \nabla^L_{\partial_t} F
= h F $ where $h$ is a function on $I \times M$ that vanishes along $I
\times \Lambda$. The derivatives of $h$ also vanish along $I \times
\Lambda$ and the second derivatives satisfy
$$ \bar Z_1 .\bar  Z_2 . h (t,x) = - \frac{1}{2} \om ( \bar  Z_1,
\mu ( \bar  Z_2)) , \qquad (t,x) \in I \times \Lambda $$
for any vectors $Z_1, Z_2 \in T^{1,0}_x M_t $. Then we have
$$ \nabla_{\partial_t}^{L^k \otimes H} ( F^k  f(\cdot, k )) = F^k ( k
h f(\cdot , k) + \nabla^H_{\partial_t} f(\cdot , k) ) $$
Next by theorem 4.1 of \cite{oim_hf} and theorem \ref{theo:proj_sec_lag}, we have
$$  \Pi_k \nabla_{\partial_t}^{L^k \otimes H} ( F^k f(\cdot, k ) ) = F^k
 g(\cdot ,k) + O( k^{-\infty})$$  
for a symbol $g( \cdot, k) = g_0 + O(k^{-1})$ with
$$ g_0  = -\frac{1}{2} f_0 \sum_{i=1}^n \bar \partial_i \bar
\partial_i h  + \nabla^H_{\partial_t} f_0 $$
at any $(t,x) \in I \times \Lambda$. Here the basis $(\partial_i)_i$
of $T^{1,0} _x M_t$ satisfies the same conditions as the one in the definition of $c(t,x)$.  
\end{proof}

Let $\pi = \frac{1}{2} ( \id - ij )$ be the projection of $E\oplus
\bar E$ onto $E$ with kernel $\bar E$.  
For any $(t,x) \in I \times \Lambda $,  $\pi_{t,x}$ restricts to an
isomorphism from $T_x \Lambda \otimes \C$ onto $T_x^{1,0} M_t$. So the
bundles $p^* (T \Lambda \otimes \C)$ and $\iota^*
E$ are naturally isomorphic, where $p$ and $\iota$ are respectively the projection $I \times
\Lambda \rightarrow \Lambda$ and the injection $I \times \Lambda
\rightarrow I \times M$. Consequently
\begin{gather} \label{eq:iso}
 p^* \wedge^{\top} (T ^* \Lambda \otimes \C) \simeq \iota^* \wedge^{\top} E^*
\end{gather}
Denote by $D_t$ the obvious derivation of the
sections of $p^* \wedge ^{\top} (T ^* \Lambda \otimes \C) $ with
respect to $\partial_t$. Recall that $\nabla^E _{\partial_t} $ is the
derivative of $\Ga(I \times M, E)$ obtained by composing the obvious
derivation in $E \oplus \bar E$ with respect to $\partial_t$ with
the projection $\pi$.  

\begin{prop} Identifying  the sections of $\iota^* \wedge ^{\top}E^*$ with the sections
  of $p^* \wedge ^{\top} (T ^*\Lambda \otimes \C)$ through the
  isomorphism (\ref{eq:iso}) induced by
  $\pi$, we have
$$  D_t = \nabla_{\partial_t}^{ \iota^*( \wedge ^{\top}E^*) } - 2 c $$
with $c$ the function defined in (\ref{eq:def_c}).
\end{prop}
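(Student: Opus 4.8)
The plan is to reduce the identity to a pointwise statement and to verify it by evaluating both operators on one conveniently chosen non-vanishing local section. After the identification~(\ref{eq:iso}), both $D_t$ and $\nabla^{\iota^*(\wedge^{\top}E^*)}_{\partial_t}$ are connections in the $\partial_t$-direction on the same line bundle over $I\times\Lambda$, so they differ by multiplication by a function of $I\times\Lambda$; to identify this function it suffices to evaluate the two operators on a single non-vanishing local section at each point. So I would fix $(t,x)\in I\times\Lambda$ and pick a local real frame $v_1,\dots,v_n$ of $T\Lambda$ near $x$, constant in the $I$-variable, adapted at $(t,x)$ in the sense of~(\ref{eq:def_c}): writing $\partial_i(\tau):=\pi_{\tau,x}(v_i)\in E_{\tau,x}$, one has $\partial_i:=\partial_i(t)$ with $\tfrac1i\om(\partial_i,\overline\partial_j)=\delta_{ij}$ and $v_i=\partial_i+\overline\partial_i$; such a frame exists because the Hermitian metric of $M_t$ restricts to a positive real form on $T_x\Lambda$ via $\pi_{t,x}$. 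The section $s:=v^1\wedge\dots\wedge v^n$ given by the dual coframe is then $D_t$-flat, and since $\pi_{t,x}^*(\partial^j(t))=v^j$ for the coframes dual to $\{\partial_i(\tau)\}$ and $\{v_i\}$, under~(\ref{eq:iso}) the section $s$ corresponds to $\tau\mapsto\partial^1(\tau)\wedge\dots\wedge\partial^n(\tau)$.

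Next I would compute $\nabla^{\iota^*(\wedge^{\top}E^*)}_{\partial_t}$ of this section at $(t,x)$. By definition of $\nabla^E$, the connection matrix $A\in\operatorname{End}(E_{t,x})$ of the frame $\tau\mapsto\partial_j(\tau)$ in the $\partial_t$-direction is $A\partial_j=\pi_{t,x}(\dot\partial_j)=\pi_{t,x}\dot\pi_{t,x}(v_j)$, the dot denoting the ordinary derivative in the trivial bundle $(E\oplus\overline E)|_{I\times\{x\}}=T_xM\otimes\C$. From $\pi_t=\tfrac12(\id-ij_t)$ and $\tfrac{d}{dt}j_t=\mu_{t,x}+\overline\mu_{t,x}$, where $\mu_{t,x}\colon\overline E_{t,x}\to E_{t,x}$ is extended by $0$ on $E_{t,x}$ and symmetrically for $\overline\mu_{t,x}$, one gets $\dot\pi_{t,x}=-\tfrac i2(\mu_{t,x}+\overline\mu_{t,x})$; decomposing $v_j=\partial_j+\overline\partial_j$ and using that $\mu_{t,x}$ kills $E_{t,x}$ while $\overline\mu_{t,x}$ kills $\overline E_{t,x}$, this yields $A\partial_j=-\tfrac i2\,\mu_{t,x}(\overline\partial_j)$. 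Reading off the $\partial_j$-component with the help of $\tfrac1i\om(\partial_i,\overline\partial_j)=\delta_{ij}$ gives $\operatorname{tr}A=-\tfrac12\sum_j\om(\mu_{t,x}(\overline\partial_j),\overline\partial_j)=-2c(t,x)$ by~(\ref{eq:def_c}). Since the induced connection on the top exterior power of the dual bundle acts by $-\operatorname{tr}A$, I conclude that $\nabla^{\iota^*(\wedge^{\top}E^*)}_{\partial_t}(\partial^1\wedge\dots\wedge\partial^n)=2c(t,x)\,\partial^1\wedge\dots\wedge\partial^n$ at $(t,x)$.

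Combining the two computations, at $(t,x)$ we have $D_t s=0$ and $\nabla^{\iota^*(\wedge^{\top}E^*)}_{\partial_t}s=2c(t,x)\,s$, hence $\bigl(D_t-\nabla^{\iota^*(\wedge^{\top}E^*)}_{\partial_t}+2c\bigr)s=0$ at $(t,x)$; since $s$ is non-vanishing there and $(t,x)$ was arbitrary, the stated identity follows. The only truly mechanical parts are the bookkeeping with the dual frames entering~(\ref{eq:iso}) and the formula for the induced connection on $\wedge^{\top}E^*$; I expect the only point requiring care to be the numerical constant, namely checking that the factor $\tfrac14$ in the definition of $c$ in~(\ref{eq:def_c}) and the $-\tfrac i2$ coming from $\pi_t=\tfrac12(\id-ij_t)$ conspire to give exactly $2c$ and not another multiple. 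One can cross-check the constant against the curvature formula for the half-form connection recalled in Proposition~\ref{prop:courbure_demi_forme}.
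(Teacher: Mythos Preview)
Your proof is correct and follows essentially the same route as the paper's own argument. Both proofs compute the connection matrix of $\nabla^E_{\partial_t}$ in the frame $\partial_j(\tau)=\pi_{\tau,x}(v_j)$ obtained by projecting a $t$-constant frame of $T_x\Lambda$, use $\dot\pi_{t,x}=-\tfrac{i}{2}(\mu_{t,x}\circ\bar\pi_{t,x}+\bar\mu_{t,x}\circ\pi_{t,x})$ to get $A\partial_j=-\tfrac{i}{2}\mu_{t,x}(\bar\partial_j)$, evaluate the trace via the orthonormality relation $\tfrac{1}{i}\om(\partial_i,\bar\partial_j)=\delta_{ij}$ to find $\operatorname{tr}A=-2c$, and then pass to $\wedge^{\top}E^*$; the only difference is that you phrase the final step with dual coframes whereas the paper tracks the inverse map $q_{t,x}$ explicitly.
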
 
\begin{proof} 
Let $x$ be a point of $\Lambda$. Consider a smooth curve $t
\rightarrow U(t)$ of
$T_x \Lambda$. We have 
 $$ \nabla^E_{\partial_t} \bigl( \pi_{t,x} U(t)\bigr) = \pi_{t,x} ( \dot{\pi}_{t,x}
 U(t) + \pi_{t,x} \dot U(t)). $$
Using that $\pi = \frac{1}{2} ( \id - ij )$, we prove that
$$ \dot{\pi}_{t,x} = - \frac{i}{2} ( \mu_{t,x}  \circ \bar{\pi}_{t,x} +
\bar{\mu}_{t,x} \circ  \pi_{t,x})$$
Consequently
$$ \nabla^E_{\partial_t} \bigl( \pi_{t,x} U(t)\bigr) = - \frac{i}{2}  \mu_{t,x}  \circ \bar{\pi}_{t,x} (U(t)) + \pi_{t,x} (\dot U(t))$$
To end the proof  we have  to show that 
$$ 
\frac{i}{2} \operatorname{tr} \bigl( \mu_{t,x}  \circ \bar{\pi}_{t,x} \circ
q_{t,x} : T_x^{1,0}M_t \rightarrow T_x^{1,0} M_t \bigr)  = 2c(t,x) $$
 where $q_{t,x} : T_x^{1,0} M \rightarrow T_x
\Lambda \otimes \C$ is the inverse of the restriction
of $\pi_{t,x}$ to $T_x \Lambda \otimes \C $. Introduce a basis
$\partial_1, \ldots , \partial_n$ of $T_x^{1,0}M \otimes \C$ as in the
definition (\ref{eq:def_c}) of $c$. Then $q_{t,x} ( \partial_i ) =
\partial_i  +
\bar{\partial_i}$, so that 
$   \bar{\pi}_{t,x} ( q_{t,x}( \partial_i ))
= \bar  \partial_i  $
and then 
\begin{xalignat*}{2}
  \operatorname{tr} \bigl( \mu_{t,x}  \circ \bar{\pi}_{t,x} \circ
q_{t,x} : T_x^{1,0}M_t \rightarrow T_x^{1,0} M_t \bigr) =  & \frac{1}{i}
\sum_j \om (  \mu_{t,x} (\bar  \partial_j) , \bar \partial _j
) \\ = & 
\frac{4}{i} c( t,x) 
\end{xalignat*}
which concludes the proof. 
\end{proof}

Let us consider a family of half-form bundles $( \delta_t ,
\varphi_t)$. For any $(t,x) \in I \times \Lambda$, one has an isomorphism
$$ \tilde{\varphi}_{t,x} =   \pi_{t,x} ^* \varphi_{t,x} : \delta_{t,x}
^{\otimes 2} \rightarrow \wedge^{\top} (T\Lambda \otimes
\C)^* $$
So we have a natural derivation $D_t $ of the sections of $\iota^*
\delta$ satisfying $$ D_t  \tilde{\varphi} ( s^{\otimes 2}) = 2 \tilde \varphi(
s \otimes D_t s).$$ 
Let us apply theorem \ref{theo:proj_sans_demi_form} with the family
$(H_t)=(  \delta_t)$ and the connection $\nabla^{\delta}$ induced by
$\nabla^E$. Then by the last proposition, the equation determining the leading coefficient is
$$ \iota^* g_0 = D_t (\iota^* f_0) .$$

\begin{theo} \label{theo:parallel-transport}
Let $f(\cdot, k) $ be a symbol of $\Ci (
  I \times M)$. Let $(\Psi_k \in \Ga ( I , \Hilb_k))_k$  be a family satisfying 
$$ \Pi_k ( \nabla^{L^k \otimes \delta}_{\partial_t} \Psi_k  + M_{f( \cdot, k)}
\Psi_k) = 0$$
Assume that there exists $t_0 \in I$ and a symbol $g(\cdot, k ) $ of $\Ga ( M,
\delta_{t_0})$ such that 
$ \Psi_{k} (t_0) = F^k ( t_0, \cdot) g( \cdot , k )  + O(k^{-\infty})
. $
Then there exists a symbol $h( \cdot, k)$ of $\Ga( I \times M ,
\delta)$ such
that 
$$  \Psi_k = F^k  h( \cdot , k )  + O(k^{-\infty})
. $$
Furthermore the  leading
coefficient $h_0$ satisfies  
$$ D_t \iota^* h_0 = \iota^* (f_0 h_0)  , \qquad  h_0 ( t_0,x) =
g_0(x), \;  \forall (t,x) \in I \times \Lambda$$ 
where $f_0$ and $g_0$ are the leading coefficients of $f( \cdot, k)$
and $g( \cdot, k)$ respectively.
\end{theo}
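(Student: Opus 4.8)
The strategy is to reduce the statement to a linear ODE in the space of symbols, solved along the interval $I$, and then to use Theorem \ref{theo:proj_sec_lag} together with Theorem \ref{theo:proj_sans_demi_form} to transfer that ODE to the level of Lagrangian states. First I would set up the \emph{ansatz}: seek $h(\cdot,k)$ as a formal series $h_0 + k^{-1} h_1 + \dots$ in $\Ga(I \times M, \delta)$ and try to solve the transport equation order by order in powers of $k$. Plugging $\Psi_k = F^k h(\cdot,k)$ into $\Pi_k(\nabla^{L^k\otimes\delta}_{\partial_t}\Psi_k + M_{f(\cdot,k)}\Psi_k) = 0$ and applying Theorem \ref{theo:proj_sans_demi_form} (with $H = \delta$ and the connection $\nabla^\delta$) and Theorem \ref{theo:proj_sec_lag}, one finds that the restriction to $I \times \Lambda$ of the total symbol must satisfy a recursive system: at leading order, using the identity $\iota^* g_0 = D_t(\iota^* f_0)$ derived just before the theorem from the last proposition, the condition becomes precisely $D_t \iota^* h_0 = \iota^*(f_0 h_0)$; at order $k^{-j}$ one gets $D_t \iota^* h_j = \iota^*(f_0 h_j) + (\text{lower-order terms involving } h_0, \dots, h_{j-1} \text{ and } f_1, \dots, f_j)$. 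Each of these is a first-order linear ODE along $t \in I$ for a section of $p^*\wedge^{\top}(T^*\Lambda\otimes\C)$ (via the isomorphism (\ref{eq:iso})), with initial condition $\iota^* h_j(t_0,\cdot)$ prescribed by the given data $g(\cdot,k)$ at $t_0$; in particular $h_0(t_0,\cdot) = g_0$ on $\Lambda$. Such ODEs have unique global solutions on $I$, so the restrictions $\iota^* h_j$ are determined, and then Borel summation gives a genuine symbol $h(\cdot,k)$ of $\Ga(I\times M,\delta)$ whose restriction to $I\times\Lambda$ realizes this formal solution.

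Next I would construct a candidate approximate solution $\tilde\Psi_k := F^k h(\cdot,k)$ using this symbol, extended off $\Lambda$ arbitrarily (the extension does not matter modulo sections vanishing to infinite order along $\Lambda$, hence modulo $O(k^{-\infty})$ by lemma 1 of \cite{oim_bt}). By construction and by Theorems \ref{theo:proj_sec_lag} and \ref{theo:proj_sans_demi_form}, one has $\Pi_k(\nabla^{L^k\otimes\delta}_{\partial_t}\tilde\Psi_k + M_{f(\cdot,k)}\tilde\Psi_k) = F^k r(\cdot,k) + O(k^{-\infty})$ where the symbol $r(\cdot,k)$ restricts to $O(k^{-\infty})$ on $I\times\Lambda$; applying the uniqueness statement (lemma 1 of \cite{oim_bt}) again, this means $\Pi_k(\nabla^{L^k\otimes\delta}_{\partial_t}\tilde\Psi_k + M_{f(\cdot,k)}\tilde\Psi_k) = O(k^{-\infty})$ in $\Ga(I,\Hilb_k)$. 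Also $\tilde\Psi_k(t_0) = \Psi_k(t_0) + O(k^{-\infty})$ by the hypothesis on $g$ and the freedom in the symbol. The final step is then a Gronwall-type / energy estimate: the difference $\Delta_k := \Psi_k - \tilde\Psi_k$ satisfies $\Pi_k(\nabla^{L^k\otimes\delta}_{\partial_t}\Delta_k + M_{f(\cdot,k)}\Delta_k) = O(k^{-\infty})$ with $\Delta_k(t_0) = O(k^{-\infty})$, so since parallel transport for $\nabla^{\toep,k} + M_{f(\cdot,k)}$ is a bounded operator on the Hilbert spaces $\Hilb_{k,t}$ with operator norm uniformly controlled on the compact subintervals of $I$, one concludes $\Delta_k = O(k^{-\infty})$ uniformly on compacts. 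This gives $\Psi_k = F^k h(\cdot,k) + O(k^{-\infty})$ as claimed, with $h_0$ satisfying the stated transport equation and initial condition.

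\textbf{Main obstacle.} The delicate point is the well-posedness and uniqueness bookkeeping at the level of symbols, specifically ensuring that $\tilde\Psi_k$ really is a uniformly-in-$k$ approximate solution and that the Gronwall argument closes with the correct uniformity: one must verify that the projector $\Pi_k$ and the multiplication operators interact with the interval derivative $\partial_t$ as dictated by Theorem \ref{theo:proj_sans_demi_form} with error terms that are genuinely $O(k^{-\infty})$ \emph{in the $\Ga(I,\Hilb_k)$-norm uniformly on compact subintervals}, not merely pointwise in $t$; this is where the precise form of the remainder estimates in \cite{oim_qm} and \cite{oim_bt} is used. The identification of the leading transport equation $D_t\iota^* h_0 = \iota^*(f_0 h_0)$ is, by contrast, essentially immediate from the last proposition of the preceding section — the whole purpose of that proposition was to compute the twist between $D_t$ and $\nabla^{\iota^*(\wedge^{\top}E^*)}_{\partial_t}$, which exactly absorbs the anomalous term $-c f_0$ appearing in Theorem \ref{theo:proj_sans_demi_form}, leaving the clean equation on $\delta$.
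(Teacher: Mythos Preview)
Your proposal is correct and follows essentially the same approach as the paper: construct an approximate solution $F^k h(\cdot,k)$ by successive approximations using Theorems~\ref{theo:proj_sec_lag} and~\ref{theo:proj_sans_demi_form}, then show it agrees with $\Psi_k$ up to $O(k^{-\infty})$. The paper's proof is in fact only a sketch (``the argument is very standard''), and you have fleshed out exactly the details it omits --- the recursive ODE structure on $I\times\Lambda$, Borel summation, and the Gronwall-type comparison.
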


\begin{proof} Since the argument is very standard, we only sketch it. 
Using theorems \ref{theo:proj_sec_lag} and
\ref{theo:proj_sans_demi_form}, we construct by successive
approximations a symbol $h( \cdot , k )$ satisfying
$$ \Pi_k \bigl( ( \nabla^{L^k \otimes \delta}_{\partial_t} + M_{f( \cdot, k)} )
(F^k h( \cdot, k)  ) \bigr) = O(k^{-\infty})$$
and the initial condition 
$$h(t_0 , x , k) = g ( x, k ) + O(k^{-\infty}), \qquad \forall x \in \Lambda$$ 
Then one proves that this approximate solution is equal to $(\Psi_k)$
up to a $O(k^{-\infty})$ term. 
\end{proof}

We are now ready to prove theorem \ref{theo:paral_transp}. Assume that
$0$ belongs to $I$. Denote by $U_t : \Hilb_{k,0} \rightarrow
\Hilb_{k,t}$ the parallel transport map for the connection 
$$ \Pi_k \bigl( \nabla_{\partial_t}^{L^k \otimes \delta} + M_{f(\cdot
  , k )} \bigr) $$
Consider the symplectic manifold $M' = M \times M^-$ with the
prequantum bundle $L_{M'} = L_M \boxtimes \bar L_M$, the family of complex
structures $( j_t'=j_t \times -j_0 )_{t \in I} $ and the family of half-form
bundles $(\delta'_t = \delta_t
\otimes \bar \delta_0)_{t \in I}$. We may apply all the previous
constructions to these data. We denote  them in the same way with a prime
added. 
In particular, $\Hilb'_k \rightarrow I$ is the associated
Hilbert space bundle. Its fiber at $t$ is 
\begin{xalignat*}{2} 
\Hilb'_{k,t} = & H^0 \bigl( M_t \times \bar{M}_0 , ( L^k_t \boxtimes \bar
L_0^k ) \otimes ( \delta_t \boxtimes \bar \delta _0 ) \bigr) \\ 
\simeq & \Hilb_{k,t} \otimes \overline \Hilb_{k,0} \\
\simeq & \End \bigl(  \Hilb_{k,0}, \Hilb_{k,t} \bigr)
\end{xalignat*}
Denote by $U$ the section of $\Hilb'_k$ whose value at $t$ is
$U_t$. It satisfies the following equation
$$ \Pi_k' \Bigl( \nabla^{ (L')^k \otimes \delta'  }_{\partial_t}  + M_{f'(
  \cdot , k )} \Bigr)  U = 0 
$$
where $f'(\cdot, k)$ is the pull-back of $f( \cdot, k)$ by the
projection from $I \times M^2$ onto the product of the first two
factors. We apply theorem \ref{theo:parallel-transport} to the section
$U$. The Lagrangian submanifold $\Lambda'$ is the diagonal of $M^2$,
the section $s'$ is given by $s' (x,x) = u \otimes \bar u$, for any $u
\in L_x$ of norm 1. $U_0$ is the identity of $\Hilb_{k,0}$. The fact that
it satisfies the
assumption of theorem \ref{theo:parallel-transport} is the basic result of the semiclassical-analysis
on compact K{\"a}hler manifolds. We deduced this result in \cite{oim_bt} from the
analysis of the Szeg{\"o} kernel in \cite{BoSj}. It says that the Schwartz kernel of the
identity of $\Hilb_{k,0}$ is equal to 
$$ \Bigl( \frac{k}{2 \pi} \Bigr)^n F'_0 \; g'( \cdot, k) +
O(k^{-\infty})$$
where $F'_0$ is the restriction to $M_0 \times \bar M_0$ of the
section $F'$ associated to $(\Lambda',s')$.  The leading
coefficient of $g'( \cdot, k)$ satisfies $g'(x,x,k) = u \otimes
\bar{u}+O(k^{-1}) $ for any  $u \in \delta_{0,x}$  of norm 1. 

To conclude we only have to compute the leading coefficient of
the solution. Identify the diagonal $\Lambda'$ with $M$, so that the
map $\tilde \varphi'_{t,x}$ corresponding to $\tilde \varphi_{t,x}$ sends $( \delta_{t,x}
\otimes \bar \delta_{0,x} ) ^2$ to $\wedge ^{\top} T_x^*M \otimes
\C$. Then a linear map $\xi : \delta_{0,x} \rightarrow \delta_{t,x}$
is a half-form morphism if and only if 
$$ \tilde \varphi'_{t,x} \bigl( (\xi (u)\otimes  \bar u )^2 \bigr) =
i^{n ( n-2)}  \om^{\wedge n} _x /n! $$
for all $u \in \delta_{0,x}$ of norm 1. This is the main reason at the
origin of
the definition of the half-form morphisms, cf. lemma 6.1 in
\cite{oim_mc} for a proof. The conclusion follows from the fact that
$\om_x$ doesn't depend on $t$.

\bibliography{biblio}

\end{document}